\newcommand{\f}{\bm f}
\newcommand{\e}{\bm e}
\newcommand{\uu}{\bm u}
\newcommand{\ww}{\bm w}
\newcommand{\T}{\bm T}
\newcommand{\R}{\mathbb{R}}
\newcommand{\N}{\mathbb{N}}
\newcommand{\norm}[1]{\left\Vert#1\right\Vert}
\def\calA{{\mathcal{A}}}
\def\calB{{\mathcal{B}}}
\def\calL{{\mathcal{L}}}
\newtheorem{theorem}{Theorem}[section]
\newtheorem{corollary}[theorem]{Corollary}
\newtheorem{lemma}[theorem]{Lemma}
\newtheorem{proposition}[theorem]{Proposition}
\newtheorem{definition}[theorem]{Definition}
\newtheorem{hyp}[theorem]{Hypotheses}
\newtheorem{remark}[theorem]{Remark}
\numberwithin{equation}{section}
\title[Kernel estimates for parabolic systems of PDEs]{Kernel estimates for parabolic systems of partial differential equations with unbounded coefficients}
\author[D. Addona]{Davide Addona}
\author[L. Lorenzi]{Luca Lorenzi}
\author[M. Porfido]{Marianna Porfido}
\address{D.Addona, L.Lorenzi: Plesso di Matematica, Dipartimento di Scienze Matematiche, Fisiche e Informatiche, Università di Parma, Parco Area delle Scienze 53/A, I-43124 Parma, Italy}
\email{davide.addona@unipr.it, luca.lorenzi@unipr.it}
\address{M. Porfido: Technische Universit\"at Bergakademie Freiberg,
Faculty of Mathematics and Computer Science
Institute of Applied Analysis,
D-09596 Freiberg, Germany}
\email{Marianna.Porfido@math.tu-freiberg.de}
\thanks{The authors are members of G.N.A.M.P.A. of the Italian Istituto Nazionale di Alta Matematica (INdAM). The authors have been partially funded by the project INdAM - G.N.A.M.P.A. ``Operatori ellittici vettoriali a coefficienti illimitati in spazi $L^p$'', CUP$\_$E53C22001930001.}
\keywords{Vector-valued elliptic operators, unbounded coefficients, kernel estimates, time-dependent Lyapunov functions.}
\subjclass[2020]{Primary: 35K40; Secondary: 35K08.}
\date{\today}
\begin{document}

\begin{abstract}
We provide pointwise upper bounds for the transition kernels of semigroups associated with a class of systems of nondegenerate elliptic partial differential equations with unbounded coefficients with possibly unbounded diffusion coefficients, which may vary equation by equation.
\end{abstract}

\maketitle

\section{Introduction}
The aim of this paper is to provide pointwise upper estimates for the kernels of semigroups associated with vector-valued elliptic operators $\bm\calA$ in divergence form and defined on smooth functions $\f:\R^d\to \R^m$ as
\begin{align}
(\boldsymbol{\calA} \f)_h
= \mathrm{div} (Q^h\nabla  f_h) +\langle b^h,\nabla f_h\rangle -(V{\boldsymbol f})_h
=\sum_{i,j=1}^dD_{i} (q_{ij}^hD_j f_h)+\sum_{i=1}^db_i^hD_if_h-\sum_{k=1}^mv_{hk}f_k
\label{operatore}
\end{align}
for $h=1,\ldots,m$, where $Q^h:\R^d\to \R^{d\times d}$, $b^h:\R^d\to \R^d$ for every $h=1,\ldots,m$ and $V:\R^d\to \R^{m\times m}$ are smooth enough functions.

In the scalar case, nowadays estimates from above for the transition kernels of semigroups associated with elliptic operators with unbounded and smooth enough coefficients are well-established and available in the literature
(see e.g., \cite{ALR,ALM,AMP08,LaiMetPalRha11,MetPalRha10,MS,OU,S} for the case of bounded diffusion coefficients and \cite{BRT19,CRT17, CT16,CKPR, DRT16,KLR, KLR1,KPR, LR,MR,MS1} for classes of equations with unbounded diffusion coefficients). 
More recently, kernel estimates have been proved also in some cases where the diffusion coefficients are unbounded in a neighborhood of the origin but bounded at infinity (see, \cite{CMNS,MNS,MSS}). Such classes of operators contain Schr\"odinger operators with inverse square potential. 
We also point out that kernel estimates have been obtained also for some classes of fractional Kolmogorov equations (see e.g., \cite{Bak12,PorRhaTac24}).
Finally, we mention that estimates of the invariant measures associated to scalar semigroups (which are, more generally speaking, solutions to Fokker-Planck-Kolmogorov equations) are the elliptic counterpart of the estimates of the transition kernels and have been widely studied with different approaches (see e.g., \cite{BKR,BRS,FFMP,MPR,Sh}). 

Differently from the scalar setting, in the vector-valued case, to the best of our knowledge, only in \cite{ALMR,MR} the authors prove estimates for the transition kernels of semigroups associated to systems of elliptic partial differential equations with unbounded coefficients. In  the quoted papers, the elliptic equations have all the same diffusion part and form methods and Davies' trick are used to get the desired estimates. In particular, in \cite{ALMR}, where, differently from \cite{MR}, the diffusion coefficients are allowed to be unbounded over $\R^d$, the kernel estimates are obtained, in terms of a distance associated to the diffusion coefficients, for a scalar semigroup $(T(t))$, associated to a second-order elliptic operator, which pointwise controls the vector valued semigroup $(\T(t))$ (i.e. $|\T(t)\f|\le T(t)|\f|$ for every $t\in (0,\infty)$ and $\f\in C_b(\R^d;\R^m)$). It is worth noticing that such a distance, associated to the diffusion terms, is equivalent to the Euclidean one if and only if the diffusion coefficients are bounded and uniformly elliptic.

The arguments exploited in the quoted papers seem not to be generalized to the case when the diffusion coefficients vary from equation to equation. In particular, in such a situation there exists no scalar semigroup that controls the vector-valued one. Hence, the problem of determining estimates for the transition kernels of the vector-valued semigroup cannot be reduced, by comparison, to the problem of determining kernel estimates for a scalar semigroup.

In this paper, we prove pointwise kernel estimates, adapting and generalizing to our setting the  techniques exploited in the scalar case in \cite{ALR,CKPR,KLR,KLR1,KPR,LaiMetPalRha11,MetPalRha10,S}, based on time-dependent Lyapunov functions. We also take advantage and extend some results proved in \cite{AddLor23,AngLor20} on weakly coupled systems of parabolic equations with unbounded coefficients and possible different diffusion terms equation by equation. We stress that our approach strongly relies on the validity of a generalized version of the classical maximum principle.
Hence, unfortunately, it cannot be directly extended to more general situations where a maximum principle cannot hold, such as the case of systems of elliptic equation coupled at the level of the second-order derivatives. However, as our two examples show, if compared with those in \cite{CKPR,KPR}, then our kernel estimates reduce to those of the scalar case if $m=1$. 

The paper is organized as follows.
In Section \ref{sec:prliminaries}, we introduce our standing assumptions, some results from \cite{AddLor23, AngLor20} and some properties of the transition kernels of the semigroups $(\T(t))$ and $(\T^P(t))$, associated in $C_b(\R^d)$ to the operators $\bm{\calA}$ and $\bm \calA^{P}$, respectively. Here, $\bm\calA^P$ is the elliptic operator with the same diffusion and drift coefficients as $\bm\calA$ and the potential matrix $V$ replaced by the potential matrix $V^P$, which differs from $V$ just in the elements outside the main diagonal (more precisely, $v_{hk}$ is replaced by $-|v_{hk}|$ if $h\neq k$). In Section \ref{sec:lyapunov}, we introduce time-dependent Lyapunov functions, which are unbounded functions, for the operator $D_t+\bm{\calA}^P$ and prove that the action of $\T^P(t)$ on such functions is well-defined for every $t>0$. We stress that, due to the presence of coupling unbounded potential terms, the computations which we perform are more involved and delicate than the analogous of the scalar case.

The results in Sections \ref{sec:prliminaries} and \ref{sec:lyapunov} are crucial to prove the main results of this paper (i.e., the pointwise kernel estimates) which are the content of Section \ref{sec:kernels}. We underline that, differently from \cite{CKPR,KLR,KLR1,KPR}, the coupling at zero order does not allow us to {\it a-priori} prove  that the kernels associated to the vector-valued semigroup are bounded when the diffusion coefficients are bounded. However, this property is crucial to apply the procedure developed in \cite{KLR}. Hence, instead of approximating the unbounded diffusion coefficients by means of bounded ones, which is the technique exploited in the quoted papers, here, for every $n\in\N$, we prove pointwise estimates for the kernels of the semigroup $(\T^{\mathcal D,P,n}(t))$, associated to the realization of the operator $\bm\calA^P$ in $C_b(B(n);\R^m)$ with homogeneous Dirichlet boundary conditions, showing that such estimates are independent on $n\in\N$. Therefore, letting $n$ tend to infinity and exploiting the convergence of $(\T^{\mathcal D,P,n}(t))$ to $(\T^{P}(t))$ and the link between $(\T^{P}(t))$ and $(\T(t))$, we deduce pointwise estimates for the kernels of the semigroup $(\T(t))$.
We conclude Section \ref{sec:kernels} by showing that, under suitable assumptions, analogous results can be obtained for the kernels of the semigroup associated to the (formal) adjoint of the operator $\bm\calA^P$.

Finally, in Section \ref{sec:examples} we provide two different classes of systems of elliptic operators with unbounded coefficients to which our results apply. In particular, under our assumptions we are able to consider both polynomially and exponentially growing coefficients. 

\smallskip

\paragraph{\bf Notation.} Here, we introduce some notation used in the paper.

{\it General notation.}
By $B(r)$, we denote the open ball of $\R^d$ centered at the origin with radius $r$, and by $\bm0$ the null vector of $\R^d$. For every $k=1,\ldots,m$, ${\bm e}_k$ is the $k$-th vector of the canonical basis of $\R^m$. Moreover, for every $a,b\in\R$, with $a<b$, we set $R_n(a,b)=(a,b)\times B(n)$ $(n\in\N)$ and $R(a,b)=(a,b)\times\R^d$.
Given an open set $A\subseteq \R^d$, a function $\bm{f}:A\to\R^m$ and $\sigma>0$, we denote by $f_1,\ldots,f_m$ its components and by $\bm{f}\wedge \sigma$ the function whose $j$-th component is $f_j\wedge\sigma$, $j=1,\ldots,m$. Further, we denote by $|\bm f|$ the vector-valued function whose $j$-th component is $|f_j|$ for every $j=1,\ldots,m$. If $\bm g:A\to\R^m$ is another function, then $\f\geq \bm g$ means that $f_j\geq g_j$ on $A$ for every $j=1,\ldots,m$.

If $u: J\times A\to \R$, where $J\subseteq [0,\infty)$ is an
interval, then we use the following notation:
$D_t u =\frac{\partial u}{\partial t}$, $D_iu=\frac{\partial u}{\partial x_i}$, $D_{ij}u=\frac{\partial^2u}{\partial x_i\partial x_j}$ and $\nabla u=(D_1u, \dots, D_du)$. Moreover, $|\nabla u(t,x)|$ and $|D^2u(t,x)|$ denote, respectively, the Euclidean norm of $\nabla u(t,x)$ and the norm of the spatial Hessian matrix $D^2u(t,x)$.
For $F:A\to \R^d$, we set $\mathrm{div}(F)=\sum_{i=1}^dD_iF_i$.


{\it Spaces of functions.} We assume that the reader is familiar with the spaces $C^k(A)$, when $k\in [0,\infty)\cup\{\infty\}$.
We use the subscript ``$b$'', to stress that all the real-valued functions that we are considering are bounded together with all the existing derivatives. We use the  subscript ``$c$'' to stress that the functions that we are considering have compact support in $A$, whereas
the subscript ``$0$'' is used, when $A=\R$ and $k=0$, for functions vanishing at $\infty$.
$B_b(A)$ is the space of bounded Borel measurable functions defined in $A\subseteq\R^d$.  

We also assume that the reader is familiar with the parabolic spaces $C^{1,2}(E)$, $C^{\frac{\alpha}{2} ,\alpha}(E)$ and $C^{1+\frac{\alpha}{2} ,2+\alpha}(E)$, ($\alpha\in (0,1)$), where $E$ is a subset of $[0,\infty)\times\R^d$.
Also for parabolic spaces, we use the subscript ``$c$''. 
Sometimes, we will use the local H\"older space $C_{\rm loc}^{\frac{\alpha}{2} ,\alpha}(E)$, whose elements belong to $C^{\frac{\alpha}{2} ,\alpha}(B)$ for every compact subset $B$ of $E$.

For every open set $A\subseteq\R^d$, $0\le a<b<\infty$ and $p\in [1,\infty)\cup\{\infty\}$, we will consider the usual parabolic Sobolev spaces $W^{0,1}_p((a,b)\times A)$,
$W^{1,2}_p((a,b)\times A)$ and the subset of $W^{0,1}_p((a,b)\times A)$, denoted by $\mathcal{H}^{p,1}((a,b)\times A)$, of functions whose distributional time derivative belongs to $(W_{p'}^{0,1}((a,b)\times A))'$, the dual space of $W_{p'}^{0,1}((a,b)\times A)$, where $1/p+1/p'=1$. This space is endowed with the norm
$\|u\|_{\mathcal{H}^{p,1}((a,b)\times A)}= \|D_t u\|_{(W_{p'}^{0,1}((a,b)\times A))'}+ \|u\|_{W_p^{0,1}((a,b)\times A)}$.

Since no confusion may arise, we use the previous notation also for spaces of vector-valued functions. For instance, $\f\in C^k(A)$ means that all the components of $\f$ are  $k$-times continuously differentiable in $A$.


\section{Main assumptions and preliminary results}
\label{sec:prliminaries}
Throughout the paper, if not otherwise specified, we consider the following assumptions. We recall that $\bm\calA$ is the operator defined by \eqref{operatore} and $\bm\calA^P$ is defined as the operator $\bm\calA$, with the matrix-valued potential $V=(v_{hk})_{h,k=1}^m$ replaced by the matrix-valued potential $V^P=(v^P_{hk})_{h,k=1}^m$, whose entries are given by $v_{hh}^P=v_{hh}$ and $v_{hk}^P=-|v_{hk}|$ for every $h,k=1,\ldots,m$, with $h\neq k$.

\begin{hyp}
\label{hyp-base}
\begin{enumerate}[\rm (i)]
\item
For every $i,j=1,\ldots,d$ and $h,k=1,\ldots,m$, the coefficients $q^h_{ij}=q^h_{ji}$ belong to $C^{1+\alpha}_{\rm loc}(\R^d)$, whereas $b_i^h$ and $v_{hk}$ belong to $C^{\alpha}_{\rm loc}(\R^d)$ for some $\alpha\in (0,1)$;
\item
the infimum $\eta_h^0$ over $\R^d$ of the minimum eigenvalue $\eta_h(x)$ of the matrix $Q^h(x)=(q^h_{ij}(x))_{i,j=1}^d$, $x\in\R^d$, is positive for every $h=1,\ldots,m$;
\item
there exist a positive function $\varphi\in C^2(\R^d)$, blowing up as $|x|$ tends to $\infty$, and $\lambda\ge 0$
such that ${{\bm{\mathcal A}}^P}\boldsymbol\varphi\le\lambda\boldsymbol\varphi$ in $\R^d$, where $\varphi_j=\varphi$ for every $j=1,\ldots,m$;
\item
the sum of the elements of each row of $V^P$ is a bounded from below function on $\R^d$, i.e., there exists $M\in\R$ such that
\begin{align}
\label{segno_V_P}
\sum_{k=1}^mv_{hk}^P(x)\geq M, \qquad\;\, x\in\R^d, \;\, h=1,\ldots,m.    
\end{align}
\end{enumerate}
\end{hyp}

\begin{definition}
A function $\bm \varphi$ satisfying Hypothesis $\ref{hyp-base}(iii)$ is called Lyapunov function for the operator $\bm{\calA}^P$.    
\end{definition}

\subsection{The semigroups $(\T(t))$ and $(\T^P(t))$ and basic properties of their kernels}
From \cite[Proposition 2.4]{AddLor23}, we infer that, under Hypotheses \ref{hyp-base}, for every $\f\in C_b(\R^d)$ the Cauchy problems
\begin{align}
(i)\,\left\{
\begin{array}{ll}
D_t{\uu}=\boldsymbol{\calA}\uu,   & {\rm in}~ (0,\infty)\times \R^d, \\[1mm]
\uu(0,\cdot)=\f,     & {\rm in}~\R^d, 
\end{array}
\right.
\qquad\;\,
(ii)\,\left\{
\begin{array}{ll}
D_t{\uu}=\boldsymbol{\calA}^P\uu,   & {\rm in}~ (0,\infty)\times \R^d, \\[1mm]
\uu(0,\cdot)=\f,     & {\rm in}~\R^d, 
\end{array}
\right.
\label{prob_cauchy_1}
\end{align}
are uniquely solvable with solutions, denoted by $\uu$ and $\uu^P$, respectively, which are bounded in each strip $[0,T]\times\R^d$, belong to $C([0,\infty)\times \R^d)\cap C^{1+\frac{\alpha}{2},2+\alpha}_{\rm loc}((0,\infty)\times \R^d)$ and, for every $t\geq0$, fulfill the estimates 
\begin{align}
\label{stima_sol_Cauchy_1}
\|\uu(t,\cdot)\|_{\infty}\leq \sqrt me^{-Mt}\max_{k=1,\ldots,m}\|f_k\|_{\infty},\qquad\;\,
\|\uu^P(t,\cdot)\|_\infty\leq \sqrt{m}e^{-Mt}\max_{k=1,\ldots,m}\|f_k\|_{\infty}.
\end{align}

By means of the solution to the corresponding Cauchy problem, we can associate a semigroup of bounded linear operators in $C_b(\R^d)$ to $\boldsymbol{\calA}$ (resp. $\boldsymbol{\calA}^P$) as follows:
for every $\f\in C_b(\R^d)$ we set $\T(t)\f=\uu(t,\cdot)$ for every $t\in [0,\infty)$ (resp. $\T^P(t)\f=\uu^P(t,\cdot)$ for every $t\in [0,\infty)$).

\begin{proposition}
\label{prop:prop_semigruppi}
The following properties are satisfied.
\begin{enumerate}[\rm (i)]
\item 
There exist families of Borel measures $\{p_{hk}(t,x,dy):t>0,\ x\in \R^d, \ h,k=1,\ldots,m\}$ and $\{p^P_{hk}(t,x,dy):t>0,\ x\in \R^d, \ h,k=1,\ldots,m\}$ such that
\begin{align}
(\T(t)\f(x))_h=\sum_{k=1}^m\int_{\R^d} f_k(y)p_{hk}(t,x,dy), \qquad\;\, \f\in C_b(\R^d), \label{int_form_smgr_1}\\  
(\T^P(t)\f(x))_h=\sum_{k=1}^m\int_{\R^d} f_k(y)p^P_{hk}(t,x,dy), \qquad\;\, \f\in C_b(\R^d). \label{int_form_smgr_2}
\end{align}
\item 
For every $t>0$, $ x\in\R^d$ and $h,k\in\{1,\ldots,m\}$, each measure $p_{hk}(t,x,dy)$ $($resp. $p_{hk}^P(t,x,dy))$
is absolutely continuous $($resp. nonnegative and absolutely continuous$)$ with respect to the Lebesgue measure $\mathcal L_d$ on $\R^d$. 
\end{enumerate}
\noindent In particular, the semigroups $(\T(t))$ and $(\T^P(t))$ extend to $B_b(\R^d)$ with strong Feller semigroups, still defined by
formulae \eqref{int_form_smgr_1} and \eqref{int_form_smgr_2}.
\end{proposition}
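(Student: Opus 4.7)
The plan is to build the measures via Riesz--Markov applied componentwise, starting with $(\T^P(t))$ where positivity is available, then transfer the representation to $(\T(t))$ by a pointwise domination, and finally deduce absolute continuity from interior parabolic regularity combined with the uniqueness of bounded solutions of \eqref{prob_cauchy_1}.

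For (i) and $(\T^P(t))$, fix $t>0$, $x\in\R^d$ and $h,k\in\{1,\ldots,m\}$ and consider the linear functional $L^P(f):=(\T^P(t)(f\e_k))(x)_h$ for $f\in C_b(\R^d)$. By \eqref{stima_sol_Cauchy_1} it is bounded by $\sqrt{m}e^{-Mt}\|f\|_\infty$, and since the off-diagonal entries of $V^P$ are non-positive by construction, the weakly-coupled maximum principle recalled from \cite{AddLor23,AngLor20} yields positivity of $(\T^P(t))$: $f\ge 0\Rightarrow L^P(f)\ge 0$. Restricting $L^P$ to $C_c(\R^d)$ and applying Riesz--Markov produces a finite nonnegative Borel measure $p^P_{hk}(t,x,dy)$ representing it on $C_c(\R^d)$. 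To extend \eqref{int_form_smgr_2} to all $f\in C_b(\R^d)$, I would use a cutoff sequence $\zeta_n\in C_c(\R^d)$ with $0\le\zeta_n\le1$ and $\zeta_n\to 1$ pointwise: the data $f\zeta_n\e_k$ stay uniformly bounded in $L^\infty$, interior Schauder estimates yield local relative compactness of the corresponding classical solutions, and the uniqueness statement recalled after \eqref{prob_cauchy_1} forces $L^P(f\zeta_n)\to L^P(f)$, while monotone convergence handles the integral side.

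For $(\T(t))$ the analogous functional $L(f):=(\T(t)(f\e_k))(x)_h$ is bounded but not necessarily positive. The key tool is the pointwise comparison $|\T(t)\f|\le \T^P(t)|\f|$, which follows from the cooperative structure of $V^P$ together with the maximum principle for weakly coupled systems developed in \cite{AddLor23,AngLor20}. Applied componentwise this gives $|L(f)|\le L^P(|f|)$, so the Riesz--Markov measure associated to $L$ on $C_c(\R^d)$ is signed with total variation dominated by $p^P_{hk}(t,x,\cdot)$. A Hahn decomposition then furnishes a signed Borel measure $p_{hk}(t,x,dy)$ satisfying $|p_{hk}(t,x,\cdot)|\le p^P_{hk}(t,x,\cdot)$ and representing $L$ on $C_c(\R^d)$; the cutoff argument above extends \eqref{int_form_smgr_1} to the whole of $C_b(\R^d)$.

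For (ii), I would handle $p^P_{hk}$ first and transfer to $p_{hk}$ through the domination. Let $A\subset\R^d$ be Borel with $\mathcal L_d(A)=0$, and approximate $\mathds{1}_A$ boundedly and $\mathcal L_d$-a.e.\ by a sequence $(f_n)\subset C_c(\R^d)$ with $\|f_n\|_\infty\le 1$ and $f_n\to 0$ a.e. The classical solutions $\uu_n^P$ of \eqref{prob_cauchy_1}(ii) with datum $f_n\e_k$ are uniformly bounded by \eqref{stima_sol_Cauchy_1}, so interior Schauder estimates yield relative compactness of $(\uu_n^P)$ in $C^{1+\alpha/2,2+\alpha}_{\rm loc}((0,\infty)\times\R^d)$. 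Any accumulation point is a bounded classical solution of the homogeneous system on $(0,\infty)\times\R^d$ whose initial trace vanishes $\mathcal L_d$-a.e., so by the uniqueness of bounded solutions it must vanish identically; hence $L^P(f_n)\to 0$, and dominated convergence against $p^P_{hk}(t,x,dy)$ forces $p^P_{hk}(t,x,A)=0$. The absolute continuity of $p_{hk}(t,x,\cdot)$ then follows from $|p_{hk}|\le p^P_{hk}$, and the strong Feller extension to $B_b(\R^d)$ is immediate, since continuity in $x$ of the right-hand sides of \eqref{int_form_smgr_1}--\eqref{int_form_smgr_2} follows by applying the same Schauder-based compactness argument to approximating sequences in $C_b(\R^d;\R^m)$. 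The main obstacle I anticipate is precisely the identification step in this last argument: because $\mathds{1}_A\e_k$ is not continuous, concluding that the limit of the classical approximating solutions is \emph{the} (vanishing) bounded solution requires a uniqueness theorem for bounded solutions with merely a.e.-initial trace -- the genuinely non-trivial input from \cite{AddLor23,AngLor20}; everything else is a routine combination of Riesz--Markov and interior parabolic regularity.
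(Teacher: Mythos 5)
Your overall plan—Riesz--Markov for $(\T^P(t))$ using positivity, transfer to $(\T(t))$ via the pointwise comparison, and a compactness argument for absolute continuity and the strong Feller property—is a reasonable and quite explicit reconstruction of what the paper delegates to \cite[Proposition~3.2 \& Theorem~3.3]{AddAngLorTes17}. Two remarks on (i): the comparison $|\T(t)\f|\le\T^P(t)|\f|$ is unnecessary for the mere existence of $p_{hk}$, since a bounded linear functional on $C_0(\R^d)$ is already represented by a signed regular Borel measure by Riesz, with total variation controlled by \eqref{stima_sol_Cauchy_1}; and this comparison, although true, is established in the paper only after Proposition~\ref{prop:prop_semigruppi} (via \eqref{dis_smgr_dirichlet}--\eqref{stima_smgr_comp_k}), so if you invoke it you should cite the version on balls from \cite{AddLor23} directly rather than the whole-space statement.

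The genuine gap is in part (ii). You take a bounded sequence $(f_n)\subset C_c(\R^d)$ with $f_n\to 0$ $\mathcal L_d$-a.e.\ and claim that any Ascoli--Arzel\`a accumulation point of $\uu_n^P=\T^P(\cdot)(f_n\e_k)$ is ``a bounded classical solution of the homogeneous system whose initial trace vanishes $\mathcal L_d$-a.e.'', hence zero by uniqueness. But interior Schauder estimates give compactness only on compact subsets of $(0,\infty)\times\R^d$; the accumulation point is not known to extend continuously to $t=0$, let alone to have an a.e.-defined initial trace, and the uniqueness statement attached to \eqref{prob_cauchy_1} is for solutions that belong to $C([0,\infty)\times\R^d)$ with a prescribed $C_b$ datum. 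So the step $L^P(f_n)\to 0$ is not justified. Moreover, even granting $L^P(f_n)\to 0$, the final sentence ``dominated convergence against $p^P_{hk}(t,x,dy)$ forces $p^P_{hk}(t,x,A)=0$'' is circular: to pass from $\int f_n\,dp^P_{hk}(t,x,\cdot)\to 0$ to $p^P_{hk}(t,x,A)=0$ you need $f_n\to\mathds1_A$ $p^P_{hk}(t,x,\cdot)$-a.e., which is exactly what absolute continuity would yield. (A Urysohn construction with a compact $K\subset A$ and shrinking neighbourhoods $U_n\supset K$ fixes the second issue, by giving $\int f_n\,dp^P_{hk}\ge p^P_{hk}(t,x,K)$ directly; but it does not repair the first.) You flag a related worry at the end, but the missing ingredient is not ``a uniqueness theorem for bounded solutions with merely a.e.-initial trace'' — no such statement is available. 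The route the paper (through \cite{AddAngLorTes17}) actually uses is the approximation by the Dirichlet semigroups $(\T^{\mathcal D,P,n}(t))$ on balls, where the kernels $p_{hk}^{\mathcal D,P,n}(t,x,\cdot)$ are classical smooth Green functions (hence absolutely continuous for free), together with the monotone convergence $p_{hk}^{\mathcal D,P,n}\uparrow p_{hk}^{P}$ of Proposition~\ref{prop:mono_nuclei} and Remark~\ref{rmk:limit_kernel}; absolute continuity then passes to the limit by monotone convergence. That is the step your argument needs to replace.
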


\begin{proof}
The first two statements follow adapting the proofs of \cite[Proposition 3.2 \& Theorem 3.3]{AddAngLorTes17}.   

From property (ii), it is clear the semigroups
$(\T(t))$ and $(\T^P(t))$ can be extended to $B_b(\R^d)$ through formulae \eqref{int_form_smgr_1} and \eqref{int_form_smgr_2}. Moreover,
if $(\f_n)\subset C_b(\R^d)$ is a bounded sequence which pointwise converges to $\f\in B_b(\R^d)$ almost everywhere in $\R^d$, then the sequences $(\T(t)\f_n)$ and $(\T^P(t)\f_n)$ converge, respectively, to $\T(t)\f$ and $\T^P(t)\f$ pointwise in $\R^d$ for every $t\in (0,\infty)$.

Actually, the interior Schauder estimates in 
\cite[Theorem A.2]{AngLor20} imply that, for every $t>0$, the sequences $(\T(t)\f_n)$ and $(\T^P(t)\f_n)$ are bounded in $C^2(K)$ for every compact set $K\subset\R^d$. Hence, by Ascoli-Arzel\`a theorem, it follows that the sequences $(\T(t)\f_n)$ and $(\T^P(t)\f_n)$ converge to bounded and continuous functions, which, by uniqueness, coincide with $\T(t)\f$ and $\T^P(t)\f$, respectively. 

Finally, we observe that if $\f\in B_b(\R^d)$ and $(\f_n)\subset C_b(\R^d)$ is a bounded sequence which converges almost everywhere in $\R^d$ to $\f$, then
$\T(t+s)\f_n=\T(t)\T(s)\f_n$ in $\R^d$ for every $s,t\in (0,\infty)$ and $n\in\N$. The left-hand side of the previous formula converges to $\T(t+s)\f$ pointwise in $\R^d$. Moreover, the sequence $(\T(s)\f_n)$ converges to $\T(s)\f$ pointwise in $\R^d$ and $\|\T(s)\f_n\|_{\infty}\le\sqrt{m}e^{-Ms}\sup_{n\in\N}\max_{k=1,\ldots,m}\|(f_n)_k\|_{\infty}$ (see \eqref{stima_sol_Cauchy_1}) for every $n\in\N$. Hence, we can apply
\cite[Proposition 2.7]{AddLor23} to infer that the sequence $(\T(t)\T(s)\f_n)$ pointwise converges in $\R^d$ to
$\T(t)\T(s)\f$ as $n$ tends to $\infty$. This shows that $(\T(t))$ is a semigroup of bounded linear operators in $B_b(\R^d)$. The same arguments and \cite[Theorem 2.7]{AngLor20} can be used to prove that $(\T^P(t))$ is a semigroup of bounded operators in $B_b(\R^d)$ as well. 

Summing up, we have proved that $(\T(t))$ and $(\T^P(t))$ are strong Feller semigroups.
\end{proof}

\begin{remark}
{\rm In view of Proposition \ref{prop:prop_semigruppi}, 
for every $h,k=1,\ldots,m$, we can determine two scalar functions $p_{hk}$ and $p_{hk}^P$ defined in $(0,\infty)\times\R^d\times\R^d$ such that
$p_{hk}(t,x,dy)=p_{hk}(t,x,y)dy$ and
$p_{hk}^P(t,x,dy)=p_{hk}^P(t,x,y)dy$ for every $(t,x)\in (0,\infty)\times\R^d$. In particular, $p_{hk}(t,x,\cdot)$ and $p^P_{hk}(t,x,\cdot)$ belong to $L^1(\R^d)$ for every $(t,x)\in (0,\infty)\times\R^d$ and their $L^1$-norms can be bounded from above by $\sqrt{m}e^{-Mt}$.}
\end{remark}

\subsection{Approximation results and consequences}
In this subsection, we approximate the semigroups $(\T(t))$ and $(\T^P(t))$ by means of semigroups associated to elliptic operators 
which are defined in a ball of $\R^d$ or in the whole $\R^d$. Through these approximations, we then prove some results which will play a crucial role in the proof of the main results of the paper.

We denote by $(\T^{\mathcal D,n}(t))$ (resp. $(\T^{\mathcal D,P,n}(t))$) the semigroup generated by the realization of the operator $\boldsymbol{\calA}$ (resp. $\boldsymbol{\calA}^P$) in $C_b(B(n))$ with homogeneous Dirichlet boundary conditions. For every $t>0$, let $\{p_{hk}^{\mathcal D,n}(t,x,y): x,y\in B(n),\ h,k=1,\ldots,m\} $ (resp. $\{p_{hk}^{\mathcal D,P,n}(t,x,y):\ x,y\in B(n),\ h,k=1,\ldots,m\} $) be the kernels of the operator $\T^{\mathcal D,n}(t)$ (resp. $\T^{\mathcal D,P,n}(t)$), i.e.,
\begin{align}
(\T^{\mathcal D,n}(t)\f(x))_h=\sum_{k=1}^m\int_{B(n)} f_k(y)p^{\mathcal D,n}_{hk}(t,x,y)dy,\qquad\;\, \f\in C_b(B(n)) 
\label{int_form_smgr_dirich}
\end{align}
and
\begin{align}
(\T^{\mathcal D,P,n}(t)\f(x))_h=\sum_{k=1}^m\int_{B(n)} f_k(y)p^{\mathcal D,P,n}_{hk}(t,x,y)dy, \qquad\;\, \f\in C_b(B(n))  
\label{int_form_smgr_dirich_pos}
\end{align}
for every $t>0$ and $x\in B(n)$. It is well-known that, for every $t>0$ and $n\in\N$, the functions $(x,y)\mapsto p_{kh}^{\mathcal D,P,n}(t,x,y)$ are positive almost everywhere in $B(n)\times B(n)$. This implies that for every $t\geq0$ the operator $\T^{\mathcal D,P,n}(t)$ maps the cone of componentwise nonnegative functions of $C_b(B(n))$ into itself.

Arguing as in the proof of \cite[Proposition 2.3]{AddLor23} (which deals with realization of $\bm\calA$ and $\bm\calA^P$ in $C_b(B(n))$ with homogeneous Neumann boundary conditions), it follows that
\begin{align}
\label{dis_smgr_dirichlet}
|((\T^{\mathcal D,n}(t)\f)(x))_h|
\leq ((\T^{\mathcal D,P,n}(t)|\f|)(x))_h,\qquad\;\,t\in (0,\infty),\;\,x\in B(n),
\end{align}
for every $n\in\N$, $\f\in B_b(\R^d)$ and $h=1,\ldots,m$. In particular, from \eqref{int_form_smgr_dirich}, \eqref{int_form_smgr_dirich_pos} and \eqref{dis_smgr_dirichlet} we infer that
$|p_{hk}^{\mathcal D,n}(t,x,y)|\leq p_{hk}^{\mathcal D,P,n}(t,x,y)$ for every $n\in\N$, $t>0$, $x,y\in B(n)$ and  $h,k=1,\ldots,m$.

From \cite[Theorem 2.4]{AddLor23} (whose proof holds true also if one considers approximations by means of solutions to Cauchy problems on balls with homogeneous Dirichlet boundary conditions), it follows that
\begin{align}
\label{conv_smgr_Dirich}
\lim_{n\to \infty}\T^{\mathcal D,n}(\cdot)\f=\T(\cdot)\f \qquad \textrm{and }\qquad
\lim_{n\to\infty}\T^{\mathcal D,P,n}(\cdot)|\f|= \T^P(\cdot)|\f| 
\end{align}
in $C^{1+\frac{\alpha}{2},2+\alpha}(E)$ for every compact set  $E\subset (0,\infty)\times \R^d$ and every $\f\in C_b(\R^d)$. From \eqref{dis_smgr_dirichlet} and \eqref{conv_smgr_Dirich} we deduce that
\begin{align}
\label{stima_smgr_comp_k}
|(\T(t)\f)_h(x)|
\leq (\T^P(t)|\f|)_h(x)
\end{align}
for every $\f\in C_b(\R^d)$, $t>0$, $x\in \R^d$ and  $h=1,\ldots,m$. 
\begin{proposition} 
\label{prop:mono_nuclei}
For every $j,n\in\N$ with $j<n$ and every $(t,x)\in(0,\infty)\times B(j)$, it holds that $(\T^{\mathcal D,P,j}(t)\f)(x)\leq (\T^{\mathcal D,P,n}(t)\f)(x)$ for every $\f\in C_b(\R^d)$ with $\f\geq \bm0$. \\
As a byproduct, $p^{\mathcal D,P,j}_{hk}(t,x,y)\leq p^{\mathcal D,P,n}_{hk}(t,x,y)$ for every $t\in (0,\infty)$ and $x,y\in B(j)$. Further, for every $\f\in C_b(\R^d)$, with $\f\geq \bm0$, and every $(t,x)\in(0,\infty)\times \R^d$, the sequence $((\T^{\mathcal D,P,n}(t)\f)(x))$ monotonically componentwise converges to $(\T^P(t)\f)(x)$ as $n$ tends to $\infty$.
\end{proposition}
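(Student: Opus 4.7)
The plan is to deduce all three assertions from a single parabolic comparison argument for the system governed by $\bm\calA^P$, whose cooperative structure (the off-diagonal entries of $V^P$ are nonpositive by construction) permits a weak maximum principle on bounded cylinders in the same spirit as the argument behind \cite[Proposition 2.3]{AddLor23}. Fix $\f\in C_b(\R^d)$ with $\f\geq \bm 0$ and $j<n$, and set $\uu^{(n)}:=\T^{\mathcal D,P,n}(\cdot)(\f|_{B(n)})$, $\uu^{(j)}:=\T^{\mathcal D,P,j}(\cdot)(\f|_{B(j)})$, and $\vv:=\uu^{(n)}-\uu^{(j)}$ on $[0,\infty)\times\overline{B(j)}$. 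Then $\vv$ solves $D_t\vv=\bm\calA^P\vv$ in $(0,\infty)\times B(j)$, vanishes at $t=0$ on $B(j)$, and is componentwise nonnegative on $(0,\infty)\times\partial B(j)$: indeed $\uu^{(j)}\equiv\bm 0$ there by the Dirichlet condition, while $\uu^{(n)}\geq\bm 0$ on $\overline{B(n)}\supset\partial B(j)$ by the positivity of $\T^{\mathcal D,P,n}(t)$ on nonnegative data, already recalled just before \eqref{dis_smgr_dirichlet}.

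Next, I would apply the weak maximum principle for weakly coupled cooperative parabolic systems to $\vv$ on $(0,T)\times B(j)$, for arbitrary $T>0$, to obtain $\vv\geq \bm 0$ on $[0,\infty)\times\overline{B(j)}$; this is precisely the first assertion. The kernel comparison is then an immediate byproduct: choosing $\f=\phi\,\e_k$ with $\phi\in C_c(B(j))$ nonnegative (extended by zero to $\R^d$), the representation \eqref{int_form_smgr_dirich_pos} turns the comparison just obtained into
\[
\int_{B(j)}\phi(y)\bigl(p^{\mathcal D,P,n}_{hk}(t,x,y)-p^{\mathcal D,P,j}_{hk}(t,x,y)\bigr)dy\geq 0
\]
for every such $\phi$ and every $(t,x)\in(0,\infty)\times B(j)$, whence the pointwise (almost everywhere) kernel inequality on $B(j)\times B(j)$ by arbitrariness of $\phi$.

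Finally, for the monotone convergence I fix $\f\in C_b(\R^d)$ with $\f\geq\bm 0$, $(t,x)\in(0,\infty)\times\R^d$, and choose $j_0\in\N$ with $x\in B(j_0)$. Applying the first assertion iteratively to the consecutive pairs $(n,n+1)$ for $n\geq j_0$ shows that the sequence $((\T^{\mathcal D,P,n}(t)\f)(x))_n$ is componentwise nondecreasing; combining this with the convergence in \eqref{conv_smgr_Dirich}, which provides pointwise convergence of $\T^{\mathcal D,P,n}(t)\f$ to $\T^P(t)\f$ on $\R^d$ (since $|\f|=\f$), the stated monotone convergence follows. The main obstacle I anticipate is a clean application of the weak maximum principle to the weakly coupled system on $B(j)$: the cooperative structure of $V^P$ and the row-sum lower bound from Hypothesis~\ref{hyp-base}(iv) make this essentially standard, but one typically needs first to perform the exponential-weight substitution $\vv\mapsto e^{\mu t}\vv$, with $\mu$ chosen large enough to absorb a possibly negative $M$, before the usual interior-minimum contradiction goes through.
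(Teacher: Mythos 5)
Your proposal is correct and follows essentially the same route as the paper: form the difference of the two Dirichlet solutions on $B(j)$, observe that it solves the homogeneous Cauchy--Dirichlet problem with the right sign on the parabolic boundary (by nonnegativity of $\T^{\mathcal D,P,n}(t)$ on nonnegative data), invoke the weak maximum principle for the cooperative weakly coupled system (the paper cites \cite[Theorem~3.13]{ProWin67}, and your exponential-weight remark is the standard device underlying it), then pass to the kernels via compactly supported nonnegative test data $\phi\,\e_k$ and conclude the monotone convergence from \eqref{conv_smgr_Dirich}.
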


\begin{proof}
Fix $j$, $n$ and $\f$ as in the statement and  set ${\ww}_{j,n}:=\T^{\mathcal D,P,j}(\cdot)\f-\T^{\mathcal D,P,n}(\cdot)\f$. A simple computation shows that  $D_t\ww_{j,n}-\boldsymbol{\calA}\ww_{j,n} =\bm{0}$ on $(0,\infty)\times B(j)$, $\ww_{j,n}\leq \bm{0}$ on $(0,\infty)\times \partial B(j)$ and $\ww_{j,n}(0,\cdot)=\bm0$ on $B(j)$. From the maximum principle in \cite[Theorem 3.13]{ProWin67} we deduce that $\ww_{j,n}\leq\bm{0}$ on $[0,\infty)\times B(j)$,  i.e., $\T^{\mathcal D,P,j}(t)\f\leq \T^{\mathcal D,P,n}(t)\f$ in $B(j)$ for every $t\geq0$.

We now take $\f=f \e_k$, where $f\in C_b(\R^d)$  is nonnegative and has support contained in $\overline{B(j)}$, $k\in\{1,\ldots,m\}$ and $\e_k$ is the $k$-th component of the canonical basis of $\R^d$, and obtain that
\begin{align*}
\int_{B(j)}p_{hk}^{\mathcal D,P,j}(t,x,y)f(y)dy\leq  \int_{B(j)}p_{hk}^{\mathcal D,P,n}(t,x,y)f(y)dy  
\end{align*}
for every $(t,x)\in (0,\infty)\times B(j)$ and $h=1,\ldots,m$.
From the arbitrariness of $f$ and $k$, we deduce that $p_{hk}^{\mathcal D,P,j}\leq p_{hk}^{\mathcal D,P,n}$ on $(0,\infty)\times B(j)\times B(j)$. 

The last part of the statement follows from the first part and \eqref{conv_smgr_Dirich}.
\end{proof}

We notice that, from \eqref{int_form_smgr_1}, \eqref{int_form_smgr_2} and \eqref {stima_smgr_comp_k}, 
\begin{align*}
\left|\int_a^bdt\int_{\R^d} p_{hk}(t,x,y)f(t,y)dy\right|
 = & \left|\int_a^b((\bm T(t)(f(t,\cdot)\bm e_k))(x))_hdt\right|
\! \leq \! \int_a^b|((\bm T(t)(f(t,\cdot)\bm e_k))(x))_h|dt \\
\leq & \int_a^b((\bm T^P(t)(f(t,\cdot)\bm e_k))(x))_hdt
=\int_a^b dt\int_{\R^d} p_{hk}^P(t,x,y)f(t,y)dy
\end{align*}
for every $a,b\in\R$, with $0\leq a<b$, and every nonnegative function  $f\in C_b([a,b]\times \R^d)$.
This implies that, for every $a,b\in\R$, with $0\leq a<b$, every $x\in\R^d$ and $h,k=1,\ldots,m$, 
\begin{align}
|p_{hk}(t,x,y)|\leq p_{hk}^P(t,x,y) \qquad \textrm{for a.e. }(t,y)\in (a,b)\times \R^d.
\label{stima_nuclei_completa}
\end{align}

\begin{remark}
{\rm 
We stress that the results quoted from \cite{AddLor23,AngLor20} have been proved assuming additionally that a nontrivial set $E\subset \{1, \ldots, m\}$, such that $v_{hk}$ identically vanishes on $\R^d$ for every $h\in E$ and $k\notin E$, does not exist. Actually this condition, which implies that the system \eqref{prob_cauchy_1}(i) cannot be decoupled,
is used only to prove that all the measures
$p_{hk}^P(t,x,dy)$ are equivalent to the Lebesgue measure (see \cite[Proposition 2.8]{AngLor20}). As we will see in Proposition \ref{prop:gen_eq_meas}, without this assumption, for every $(t,x)\in(0,\infty)\times \R^d$ and every $h,k=1,\ldots,m$, all the measures $p_{hk}^P(t,x,dy)$, that are not trivial, are equivalent to the Lebesgue measure.} \end{remark}




\begin{remark}
\label{rmk:analiticita}
{\rm
For every $\ell\in\{1,\ldots,m\}$ and $n\in\N$, we introduce the operator $\calA^{\ell}$, defined on scalar smooth functions $u:B(n)\to \R$ by
$\calA^{\ell} u
= {\rm div}(Q^{\ell}\nabla u)+\langle b^{\ell},\nabla u\rangle -v_{\ell \ell}u$.
For every $n\in\N$, $\bm\calA$ can be seen as a perturbation of the diagonal operator $\bm A$, whose components are the operators $\calA^1,\ldots,\calA^m$, by means of the operator $\bm B=(V-{\rm diag}(v_{11},\ldots,v_{mm}))$ in $C_b(B(n))$. Since the $k$-th component of $\bm A$, with domain $\{f\in\bigcap_{p<\infty}W^{2,p}(B(n)) :f\equiv0 \textrm{ on }\partial B(n), \mathcal A^kf\in C_b(B(n))\}$, $k=1,\ldots,m$, is a sectorial operator in $C_b(B(n))$ and $\bm B$ is a bounded linear operator in $(C_b(B(n)))$, from \cite[Proposition 2.4.1]{Lun10} we infer that $\bm A-\bm B=\bm\calA$ is a sectorial operator in $C_b(B(n))$ with domain $\{\f\in\bigcap_{p<\infty}W^{2,p}(B(n)):f_k\equiv0\textrm{ on }\partial B(n) \textrm{ for every }k=1,\ldots,m, \ \bm\calA\bm f\in C_b(B(n)))$.

Similar arguments show that the realization of the operator $\bm\calA^P$ in $C_b(B(n))$, with domain $\{\f\in \bigcap_{p<\infty}W^{2,p}(B(n)):f_k\equiv0\textrm{ on }\partial B(n) \textrm{ for every }k=1,\ldots,m, \ \bm\calA^P\bm f\in C_b(B(n))\}$ is sectorial. Finally, also the realization of the operator $\bm\calA^P_n$ ($n\in\N$) in $C_b(\R^d)$, with domain $\{\bm f\in \bigcap_{p<\infty}W^{2,p}_{\rm loc}(\R^d):\bm\calA^P_n\bm f\in C_b(\R^d))$ is sectorial. Here,
\begin{align}
\label{op_vett_tronc}
({\bm\calA}^P_n \f)_k
= & \vartheta_n{\rm div}(Q^k\nabla f_k)+\eta^0(1-\vartheta_n)\Delta f_k+ \vartheta_n\langle b^k,\nabla f_k\rangle -\vartheta_n(V^P\bm f)_k, \quad k=1,\ldots,m,
\end{align}
where $\eta^0=\min\{\eta^0_k:k=1,\ldots,m\}$ (see Hypothesis \ref{hyp-base}(ii)) and $\vartheta_n\in C^\infty_c(\R^d)$ satisfies the condition $\chi_{B(n)}\leq \vartheta_n\leq \chi_{B(2n)}$.}
\end{remark}

\begin{remark}
\label{rmk:limit_kernel}
{\rm As a consequence of Proposition \ref{prop:mono_nuclei}, it follows that for every $(t,x)\in(0,\infty)\times \R^d$ and every $h,k=1,\ldots,m$, there exist the limit
\begin{align*}
\lim_{n\to\infty} p_{hk}^{\mathcal D,P,n}(t,x,y)=:r^P_{hk}(t,x,y)\in[0,\infty)\cup\{\infty\},    \qquad y\in \R^d,
\end{align*}
and a subset $N_{t,x}\subseteq \R^d$ of null Lebesgue measure such that $r^P_{hk}(t,x,y)<\infty$ for every $y\in \R^d\setminus N_{t,x}$. Further, from \eqref{int_form_smgr_2} and \eqref{conv_smgr_Dirich} we infer that, for every $(t,x)\in(0,\infty)\times \R^d$ and every $h,k=1,\ldots,m$, we get $r^{P}_{hk}(t,x,y)=p^P_{hk}(t,x,y)$ for a.e. $y\in\R^d$.

Hereafter, for every $(t,x)\in(0,\infty)\times \R^d$ and every $h,k\in\{1,\ldots,m\}$, we always consider the version of $p_{hk}^P(t,x,\cdot)$ given by $r^P_{hk}(t,x,\cdot)$.}
\end{remark}

In the following proposition, the complement of a subset of $\{1,\ldots,m\}$ is always meant with respect to the universe set $\{1,\ldots,m\}$. Moreover, for every $k\in\{1,\ldots,m\}$ and $i\in\N$, we introduce the sets
\begin{align*}
H_k^0
= & \left\{h\in\{1,\ldots,m\}\setminus\{k\}:v_{hk}\not\equiv 0 \textrm{ on }\R^d\right\}; \\   H_k^i= & \bigg\{h\in\{1,\ldots,m\}\setminus\bigg(\{k\}\cup\bigcup_{r=0}^{i-1}H_k^r\bigg):\exists \ell\in H_k^{i-1} \textrm{ such that }v_{h\ell}\not\equiv 0 \textrm{ on }\R^d\bigg\}.
\end{align*}
Since the cardinality of $H_k^{i}$ is strictly less than $m-i$, it follows immediately that if $i> m-1$ then $H_k^i=\varnothing$. Finally, we set $F_k=\{k\}\cup\bigcup_{i=0}^{m-1}H^i_k$.

\begin{proposition}
\label{prop:gen_eq_meas}
For every $h,k=1,\ldots,m$, the measure $p_{hk}^P(t,x,y)dy$ is either equivalent to the Lebesgue measure for every $(t,x)\in (0,\infty)\times\R^d$ or it is the null measure for every $(t,x)\in(0,\infty)\times \R^d$. This second case occurs if and only if $F_k\neq \{1,\ldots,m\}$ and $h\in F_k^c$.
\end{proposition}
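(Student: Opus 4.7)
The plan is to split the analysis according to whether $h\in F_k$ or $h\in F_k^c$, and in each case combine the conclusion with the absolute continuity from Proposition~\ref{prop:prop_semigruppi}(ii). The first case will give equivalence with the Lebesgue measure, the second will give the null measure, uniformly in $(t,x)\in(0,\infty)\times\R^d$.

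For the null case ($h\in F_k^c$, so $F_k\ne\{1,\ldots,m\}$), the key structural observation is that any $h'\in F_k^c$ and $\ell\ne h'$ with $v_{h'\ell}\not\equiv 0$ must satisfy $\ell\in F_k^c$: otherwise $\ell\in F_k$ would be reachable from $k$, and one further step along the non-vanishing coefficient $v_{h'\ell}$ would place $h'$ into $F_k$. Consequently, for every $f\in C_b(\R^d)$, the components $(u_{h'})_{h'\in F_k^c}$ of $\uu=\T^P(\cdot)(f\e_k)$ satisfy a \emph{closed} parabolic sub-system with zero initial datum (since $k\in F_k$), and Hypotheses~\ref{hyp-base} transfer to this sub-system: the row sums of the restricted $V^P$ coincide with those of the full $V^P$ (the off-diagonal couplings to $F_k$ vanish), and the vector $\bm\psi:=(\varphi)_{h\in F_k^c}$ is a Lyapunov function because $(\bm{\calA}^P|_{F_k^c}\bm\psi)_h=(\bm{\calA}^P\bm\varphi)_h\le\lambda\varphi$ for every $h\in F_k^c$. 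The uniqueness part of \cite[Proposition~2.4]{AddLor23} then forces $u_{h'}\equiv 0$ for every $h'\in F_k^c$, and the arbitrariness of the nonnegative $f$ gives $p_{hk}^P(t,x,\cdot)=0$ almost everywhere.

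For the equivalence case ($h\in F_k$), by construction there exists a chain $k=j_0,j_1,\ldots,j_r=h$ with $v_{j_{i+1},j_i}\not\equiv 0$ for every $i=0,\ldots,r-1$, and I would prove by induction on $i$ that $p_{j_i k}^P(t,x,\cdot)>0$ a.e.\ for every $(t,x)\in(0,\infty)\times\R^d$. Writing $\bm{\calA}^P$ componentwise and using $v_{hh}^P=v_{hh}$ and $v_{h\ell}^P=-|v_{h\ell}|$ for $h\ne\ell$ gives the identity $D_t u_h-\calA^h u_h=\sum_{\ell\ne h}|v_{h\ell}|u_\ell$, where each $u_\ell\ge 0$ thanks to the positivity preservation of $\T^P$ (Proposition~\ref{prop:mono_nuclei}). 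For the base case $i=0$ this yields $D_t u_k-\calA^k u_k\ge 0$ with $u_k(0,\cdot)=f$, and scalar comparison together with the classical strict positivity of the Kolmogorov kernel of the scalar operator $\calA^k$ under our assumptions produces $u_k>0$ on $(0,\infty)\times\R^d$. For the inductive step, taking $f>0$ and using the inductive positivity of $u_{j_i}$, I estimate $D_t u_{j_{i+1}}-\calA^{j_{i+1}}u_{j_{i+1}}\ge |v_{j_{i+1},j_i}|u_{j_i}$ with zero initial datum; since $v_{j_{i+1},j_i}$ is continuous and not identically zero, the right-hand side is nonnegative and strictly positive on a non-empty open set, and Duhamel's formula applied to the strictly positive scalar kernel of $\calA^{j_{i+1}}$ gives $u_{j_{i+1}}>0$ pointwise.

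The main obstacle I anticipate is upgrading the pointwise positivity $u_h(t,x)=\int f(y)p_{hk}^P(t,x,y)dy>0$, valid for every nonnegative nontrivial $f\in C_b(\R^d)$, to almost-everywhere positivity of the density $y\mapsto p_{hk}^P(t,x,y)$ itself. I plan to combine the joint continuity of $p_{hk}^P$, following from parabolic Schauder estimates under Hypotheses~\ref{hyp-base}(i)--(ii), with a density and semigroup-iteration argument in the spirit of \cite[Proposition~2.8]{AngLor20}: continuity makes the positivity set open, applying the chain argument to bump functions concentrated near an arbitrary $y_0\in\R^d$ makes it dense, and the Chapman--Kolmogorov identity $p_{hk}^P(t+s,x,y)=\sum_\ell\int p_{h\ell}^P(t,x,z)p_{\ell k}^P(s,z,y)dz$ arising from $\T^P(t+s)=\T^P(t)\T^P(s)$ spreads the positivity to a set of full Lebesgue measure.
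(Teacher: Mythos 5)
Your overall architecture matches the paper's: you split on $h\in F_k$ versus $h\in F_k^c$, and for the null case you make exactly the right structural observation (for $h'\in F_k^c$ and $v_{h'\ell}\not\equiv 0$ one must have $\ell\in F_k^c$, so the components indexed by $F_k^c$ satisfy a closed subsystem with zero initial datum, killed by uniqueness), which is the paper's Step~1 applied with $E=F_k^c$. Your chain argument for strict positivity of $(\T^P(t)\f)_h$ when $h\in F_k$ also runs parallel to the paper's Step~2, though you work directly with the whole-space operator where the paper is more careful and uses the Dirichlet semigroups $(T^{\mathcal D,n}_\ell(t))$ on balls together with their classical irreducibility, and then passes to the limit via the monotonicity of Proposition~\ref{prop:mono_nuclei}; invoking ``the classical strict positivity of the Kolmogorov kernel of $\calA^{j_{i+1}}$'' is not obviously available for unbounded coefficients, so this detour through bounded domains is not cosmetic.

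The genuine gap is in your final step, where you anticipate it yourself. You want to upgrade $\int f\,p_{hk}^P(t,x,\cdot)\,dy>0$ (for all nonnegative nontrivial $f$) to a.e.\ positivity of the density $y\mapsto p_{hk}^P(t,x,y)$, and your proposed tool is ``joint continuity of $p_{hk}^P$, following from parabolic Schauder estimates.'' But the interior Schauder estimates apply to $(t,x)\mapsto(\T^P(t)\f)(x)$, i.e.\ they control the \emph{forward} variables; they say nothing about the regularity of $p_{hk}^P(t,x,\cdot)$ in the integration variable $y$, which lives in the measure, not the solution. Regularity in $y$ would require information about the (formal) adjoint problem, and in this paper that only appears much later (Section~4, Corollary~\ref{coro:stima_peso_*} and Proposition~\ref{prop-2.7}) under strictly stronger hypotheses. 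So the continuity you need is simply not available at the stage of Proposition~\ref{prop:gen_eq_meas}.

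The paper sidesteps this entirely: to prove equivalence with $\mathcal L_d$ it suffices to show that if $p_{hk}^P(t,x,A)=0$ for a Borel set $A$, then $\mathcal L_d(A)=0$. Assuming $\mathcal L_d(A)>0$, the strong Feller property (already established in Proposition~\ref{prop:prop_semigruppi}) gives that $\bm g_\varepsilon:=\T^P(\varepsilon)(\chi_A\e_k)$ is a nonnegative element of $C_b(\R^d)$ whose $k$-th component is strictly positive; then Step~2 applied to $\T^P(t-\varepsilon)\bm g_\varepsilon$ yields $p_{hk}^P(t,x,A)=(\T^P(t)(\chi_A\e_k))_h>0$, a contradiction. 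This replaces your ``continuity of the density in $y$'' by ``continuity of the semigroup applied to $\chi_A\e_k$ in $x$,'' which is exactly what strong Feller delivers, and avoids Chapman--Kolmogorov iterations altogether. I would recommend replacing your last paragraph with this argument.
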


\begin{proof}
We fix $k\in\{1,\ldots,m\}$ and split the proof into some steps.

{\em Step 1}. Here, we assume that there exists a nontrivial set $E\subset \{1,\ldots,m\}$ such that $h\in E$, $k\in E^c$ and $v_{rs}\equiv0$ on $\R^d$ for every $r\in E$ and $s\in E^c$, and prove that $p_{hk}(t,x,y)dy$ is the null measure for every $(t,x)\in (0,\infty)\times\R^d$.

We denote by $i_1,\ldots,i_{\overline \ell}$ the elements of $E$, ordered from the smallest to the largest, fix a function $f\in C_b(\R^d)$ and set $\f=f\bm{e}_k$. Since $v_{i_js}=0$ on $\R^d$ for every $j=1,\ldots,\overline \ell$ and every $s\not\in E$, the function $\uu:\R^d\to\R^{\overline \ell}$, where $u_j=(\T^P(\cdot)\f)_{i_j}$ for every $j=1,\ldots,\overline \ell$,  is bounded in each strip $[0,T]\times\R^d$, belongs to $C([0,\infty)\times \R^d)\cap C^{1+\frac{\alpha}{2},2+\alpha}_{\rm loc}((0,\infty)\times \R^d)$ and solves the Cauchy problem
\begin{align}
\left\{
\begin{array}{ll}
D_t\bm{v}=\bm\calB\bm v,  &\textrm{ in }(0,\infty)\times \R^d,\\[1mm]
\bm v(0,\cdot)=\bm{0}, & \textrm{ on }\R^d,
\end{array}
\right.
\label{cauchy_prob_null_in_datum}
\end{align}
where $\bm\calB$ is the operator defined on smooth functions $\f:\R^d\to\R^{\overline \ell}$ as
\begin{align*}
(\bm\calB\f)_{i_j}=\mathrm{div} (Q^{i_j}\nabla  f_{i_j}) +\langle b^{i_j},\nabla f_{i_j}\rangle -(V^P{\f})_{i_j}, \qquad j=1,\ldots,\overline \ell.    
\end{align*}
Since $\bm\calB$ satisfies Hypotheses \ref{hyp-base}, from
\cite[Theorem 2.4]{AddLor23} we infer that problem \eqref{cauchy_prob_null_in_datum} admits a unique solution which is bounded in each strip $[0,T]\times\R^d$ and belongs to $C([0,\infty)\times \R^d)\cap C^{1+\frac{\alpha}{2},2+\alpha}_{\rm loc}((0,\infty)\times \R^d)$. Therefore, $\uu\equiv \bm 0$ or, equivalently,
\begin{align*}
0=(\T^P(t)\f)_{i_j}(x)=\int_{\R^d}f(y)p_{i_j k}(t,x,y)dy, \qquad (t,x)\in (0,\infty)\times \R^d, \ j=1,\ldots,\overline \ell.
\end{align*}
Due to arbitrariness of $f\in C_b(\R^d)$ and recalling that $h=i_j$ for some $j\in\{1,\ldots,\overline \ell\}$, we deduce that $p_{hk}(t,x,y)dy=0$ for every $(t,x)\in(0,\infty)\times \R^d$. 

{\em Step 2}. In this step we prove that, if $\bm0\leq\f\in C_b(\R^d)$ and $f_k$ does not identically vanish on $\R^d$, then $(\T^P(t)\f)_h$ is strictly positive in $\R^d$ for every $t>0$ and $h\in F_k$. 
For this purpose, we fix $n\in\N$. From the variation-of-constants formula, we infer that
\begin{align}
\label{var_cons_form_diri}
(\T^{\mathcal D,P,n}(t)\f)_\ell
= & T^{\mathcal D,n}_\ell(t)f_\ell-\sum_{r=1, r\neq \ell}^m\int_0^tT^{\mathcal D,n}_\ell(t-s)(v_{\ell r}^P(\T^{\mathcal D,P,n}(s)\f)_r)ds 
\end{align}
for every $t>0$, $\ell=1,\ldots,m$, and $n\in\N$, where, for every $n\in\N$, $(T^{\mathcal D,n}_\ell(t))$ is the scalar semigroup associated to the realization of the elliptic operator $\calA_{\ell}:=\mathrm{div} (Q^\ell\nabla ) -\langle b^\ell,\nabla \rangle -v_{\ell\ell}$ in $C_b(B(n))$ with homogeneous Dirichlet boundary conditions. Since $(T^{\mathcal D,n}_\ell(t))$ is irreducible (see \cite[Proposition 1.2.13]{book}), $v_{\ell r}^P\leq0$, for every $\ell=1,\ldots,m$ and $r\in\{1,\ldots,m\}\setminus\{\ell\}$, and $\T^{\mathcal D,P,n}(\cdot)\f\ge \bm{0}$ in $(0,\infty)\times\R^d$, from \eqref{var_cons_form_diri} it follows that $(\T^{\mathcal D,P,n}(t)\f)_\ell\ge T^{\mathcal D,n}_\ell(t)f_\ell>0$ on $B(n)$ for every $t>0$, if $f_\ell$ does not identically vanish on $\R^d$. Recalling that $(T^{\mathcal D,n}_\ell(t)f_\ell)$ is a pointwise increasing and convergent sequence in $\R^d$ (see for instance \cite[Remark 1.2.2]{book}), it follows easily that $(\T^P(t)\f)_\ell$ is positive in $\R^d$ for every $t>0$ if $f_\ell$ does not identically vanish on $\R^d$. In particular, $(\T^{P}(t)\f)_k>0$ on $\R^d$ for every $t>0$. Hence, if $f_h$ does not identically vanish on $\R^d$, then we are done. If $f_h\equiv 0$ on $\R^d$, then from \eqref{var_cons_form_diri} we deduce that
\begin{align*}
(\T^{\mathcal D,P,n}(t)\f)_h= -\sum_{r=1, r\neq h}^m\int_0^tT^{\mathcal D,n}_h(t-s)(v_{hr}^P(\T^{\mathcal D,P,n}(s)\f)_r)ds, \qquad t>0. 
\end{align*}
Let $\overline i\in\{0,\ldots,m-1\}$ be the unique index such that $h\in H^{\overline i}_k$. 
We set $j_{\overline i}=h$ and, if $\overline i\neq 0$, then for every $i=0,\ldots,\overline i-1$, we denote by $j_i\in H^i_k$ any index such that $v_{j_{i+1}j_i}\neq0$ on $\R^d$ (the existence of such an index is guaranteed by the definition of $H^{i+1}_k$). From  \eqref{var_cons_form_diri} with $\ell=j_0$ and recalling that $f_{j_0}\geq 0$ and that $v_{rs}^P=-|v_{rs}|$ for every $r,s=1,\ldots,m$ with $r\neq s$, we infer that
\begin{align*}
(\T^{\mathcal D,P,n}(t)\f)_{j_0}
\geq & \int_0^tT^{\mathcal D,n}_{j_0}(t-s)(|v_{j_0k}|(\T^{\mathcal D,P,n}(s)\f)_k)ds,\qquad\;\,t\in (0,\infty).    
\end{align*}
Since $v_{j_0k}\neq0 $ and $(\T^{\mathcal D,P,n}(t)\f)_k>0$, from the irreducibility of $(T^{\mathcal D,n}_{j_0}(t))$  we deduce that $(\T^{\mathcal D,P,n}(\cdot)\f)_{j_0}>0$ in $(0,\infty)\times B(n)$. Letting $n$ tend to $\infty$, from Proposition \ref{prop:mono_nuclei} we infer that $(\T^P(\cdot)\f)_{j_0}>0$ in $(0,\infty)\times\R^d$. 

If $\overline i=0$ then we are done. Otherwise, we observe that 
\begin{align*}
(\T^{\mathcal D,P,n}(t)\f)_{j_1}
\geq & \int_0^tT^{\mathcal D,n}_{j_1}(t-s)(|v_{j_1j_0}|(\T^{\mathcal D,P,n}(s)\f)_{j_0})ds
, \qquad t\in (0,\infty),   
\end{align*}
so that also $(\T^{\mathcal D,P,n}(t)\f)_{j_1}>0
$ on $B(n)$. Iterating this argument, in a finite number of steps we show that $(\T^{\mathcal D,P,n}(t)\f)_{h}=(\T^{\mathcal D,P,n}(t)\f)_{j_{\overline i}}>0$ in $B(n)$. Finally, letting $n$ tend to $\infty$ and taking Proposition \ref{prop:mono_nuclei} into account,  we conclude that $(\T^P(t)\f)_h>0$ in $\R^d$.

{\em Step 3}.
Now, we prove that, for every $h\in F_k$, the measure $p_{hk}^P(t,x,y)dy$ is equivalent to the Lebesgue measure $\mathcal L_d$  on $\R^d$ for every $(t,x)\in (0,\infty)\times\R^d$. In view of Proposition \ref{prop:prop_semigruppi}, it is enough to show that, if $p^P_{hk}(t,x,A)=0$, then $\mathcal L_d(A)=0$. Assume by contradiction that $\mathcal L_d(A)>0$ and notice that, from \eqref{var_cons_form_diri}, Proposition \ref{prop:prop_semigruppi} and the strong Feller and irreducibility properties of $(T^{\mathcal D,n}_k(t))$, it follows that
$(\T^{\mathcal D,P,n}(t)(\chi_A{\bm e}_k))_k\geq T_k^{\mathcal D,n}(t)\chi_A>0$ on $\R^d$ for every $t>0$ and every $n\in\N$. Letting $n$ tend  to infinity, Propositions \ref{prop:prop_semigruppi} and \ref{prop:mono_nuclei} (which implies that the sequence
$(\T^{\mathcal D,P,n}(t)(\chi_A{\bm e}_k))_k$ is increasing)
show that
$(\T^{P}(t)(\chi_A{\bm e}_k))_k>0$ on $\R^d$ for every $t>0$. Since $\T^{P}(t)(\chi_A\e_k)=\T^{P}(t-\varepsilon)(\T^{P}(\varepsilon)(\chi_A\e_k))$, $\T^{P}(\varepsilon)(\chi_A\e_k)$ belongs to $C_b(\R^d)$ (see Proposition \ref{prop:prop_semigruppi}) and it is nonnegative in $\R^d$ for every $\varepsilon\in(0,t)$, from Step 2 it follows that $p_{hk}^P(t,x,A)=(\T^{P}(t)(\chi_A\e_k))_h>\bm 0$ on $\R^d$ for every $h\in F_k$. This yields a contradiction. Hence, $\mathcal L_d(A)=0$. 

{\em Step 4}. It remains to show that, if $h\in F_k^c$ then $p_{hk}^P(t,x,y)dy$ is the null measure for every $(t,x)\in (0,\infty)\times\R^d$. For this purpose, we observe that, for every $r\in F_k^c$, it follows that $r\neq k$ and $r\notin H_k^i$ for every $i\in\{0,\ldots,m-1\}$. This implies that $v_{rs}\equiv 0$ whenever $s=k$ or $s\in H_k^i$, for every $i\in\{0,\ldots,m-1\}$, i.e., whenever $s\in F_k$. From Step 1, with $E=F_k^c$, it follows that, for every $h\in F_k^c$,  $p^P_{hk}(t,x,y)dy$ is the trivial measure for every $(t,x)\in (0,\infty)\times\R^d$. 

The proof is complete.
\end{proof}
\begin{remark}
{\rm 
Fix $k\in\{1,\ldots,m\}$ and suppose that $E$ is a nontrivial subset of $\{1, \ldots, m\}$ such that $k\in E^c$ and $v_{rs}$ identically vanishes on $\R^d$ for every $r\in E$ and $s\notin E$. Then, $E\subseteq F_k^c$.
In particular, if this latter set is empty then no set $E$ with the above properties exists.
Indeed, suppose that $E$ is a set with the above properties. If $h\in E$ then $h\neq k$ from the definition of $E$. Further, $h\notin H^0_k$ since $v_{hk}\equiv 0$ on $\R^d$. We notice that $h\notin H^1_k$. Indeed, suppose by contradiction that $h\in H^1_k$. Then, there exists an index $j_1\in H^0_k$ such that $v_{j_1k}\not\equiv0$ and $v_{hj_1}\not\equiv0$ on $\R^d$. This implies that $j_1\notin E$, since $v_{j_1k}\not\equiv 0$ on $\R^d$, and $j_1\notin E^c$, since $v_{hj_1}\not\equiv0$ on $\R^d$. Iterating this argument, we conclude that $h\notin H^i_k$ for every $i\in\{0,\ldots,m-1\}$.}     
\end{remark}

We denote by $({\bm T}_n^P(t))$ the analytic semigroup in $C_b(\R^d)$ associated to the operator ${\bm\calA}^P_n$, defined in \eqref{op_vett_tronc}.
\begin{proposition}
\label{prop:conv_SMGR}
For every ${\bm{f}}\in C_b(\R^d)$ and every compact set $E\subseteq (0,\infty)\times \R^d$, the sequence $(\bm{T}^P_n(\cdot)\f)$ converges to $\bm{T}^P(\cdot)\f$ in $C^{1,2}(E)$. If $\f\in C_c(\R^d)$, then we can also consider compact sets $E\subseteq [0,\infty)\times \R^d$.   
\end{proposition}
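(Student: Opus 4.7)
The plan is to view $\vv_n:=\T^P_n(\cdot)\f$ and $\vv:=\T^P(\cdot)\f$ as solutions of two Cauchy problems with the same initial datum, and to argue by compactness plus uniqueness: obtain uniform interior Schauder bounds on the family $(\vv_n)$, extract a limit via Arzelà--Ascoli, and identify it with $\vv$ through uniqueness of the bounded classical solution.

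\emph{Step 1 (uniform sup-norm bound).} The coefficients of $\bm\calA^P_n$ from \eqref{op_vett_tronc} are bounded: the matrices $\vartheta_nQ^k+\eta^0(1-\vartheta_n)I_d$ are uniformly elliptic (with constant $\eta^0$) and bounded, while $\vartheta_nb^k$ and $\vartheta_nV^P$ are compactly supported. Moreover, by Hypothesis \ref{hyp-base}(iv), the row sums of $\vartheta_nV^P$ are bounded from below by $\min(0,M)$ uniformly in $n$. Hence, repeating the maximum principle argument that yields \eqref{stima_sol_Cauchy_1}, one gets for every $T>0$ a constant $C_T$, independent of $n$, such that $\|\vv_n(t,\cdot)\|_\infty\leq C_T\max_k\|f_k\|_\infty$ for all $t\in[0,T]$.

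\emph{Step 2 (uniform interior Schauder bounds).} Given a compact $E\subset(0,\infty)\times\R^d$, choose $R>0$ with $E\subset(0,\infty)\times B(R)$. For $n$ large (so that $\vartheta_n\equiv 1$ on $B(R+1)$), the operators $\bm\calA^P_n$ and $\bm\calA^P$ agree on $B(R+1)$, hence $\vv_n$ solves $D_t\vv_n=\bm\calA^P\vv_n$ on $(0,\infty)\times B(R+1)$. The interior Schauder estimates of \cite[Theorem A.2]{AngLor20}, combined with the uniform $L^\infty$ bound of Step 1, then yield
\[
\sup_n\|\vv_n\|_{C^{1+\alpha/2,2+\alpha}(E)}<\infty.
\]

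\emph{Step 3 (compactness and PDE for the limit).} By Arzelà--Ascoli, a subsequence $(\vv_{n_k})$ converges in $C^{1,2}_{\rm loc}((0,\infty)\times\R^d)$ to some $\ww$, which is bounded on each strip $[0,T]\times\R^d$. Since on every compact $K\subset\R^d$ the operator $\bm\calA^P_{n_k}$ eventually coincides with $\bm\calA^P$, passing to the limit in $D_t\vv_{n_k}=\bm\calA^P_{n_k}\vv_{n_k}$ yields $D_t\ww=\bm\calA^P\ww$ in $(0,\infty)\times\R^d$.

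\emph{Step 4 (identification with $\vv$).} By uniqueness of the bounded classical solution of \eqref{prob_cauchy_1}(ii) (cf.\ \cite[Theorem 2.7]{AngLor20} and \cite[Proposition 2.4]{AddLor23}), the conclusion $\ww=\vv$ will follow once we verify that $\ww(t,\cdot)\to\f$ as $t\downarrow0$ in a suitable sense. The plan is to establish a uniform-in-$n$ modulus of continuity of $\vv_n$ at $t=0$ via a barrier built from the Lyapunov function $\bm\varphi$ of Hypothesis \ref{hyp-base}(iii) together with the uniform ellipticity and row-sum estimates available for $\bm\calA^P_n$. Since all convergent subsequences share the same limit $\vv$, the whole sequence $(\vv_n)$ converges.

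\emph{Step 5 ($\f\in C_c(\R^d)$).} When $\f$ is compactly supported, parabolic Schauder estimates can be propagated up to the initial time, yielding a uniform bound on $\vv_n$ on compact subsets of $[0,\infty)\times\R^d$; the Arzelà--Ascoli extraction of Step 3 then upgrades to convergence in $C^{1,2}(E)$ for any compact $E\subset[0,\infty)\times\R^d$.

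The main obstacle is Step 4, namely transferring the initial condition $\vv_n(0,\cdot)=\f$ to the limit $\ww$. This requires a barrier argument, uniform in $n$, exploiting that Hypotheses \ref{hyp-base}(iii)--(iv) can be propagated to $\bm\calA^P_n$ (the two operators coincide on $B(n)$, while outside $B(n)$ the diffusion is uniformly elliptic and bounded and the potential has controlled row sums). The same barrier will also be the key input needed to close Step 5.
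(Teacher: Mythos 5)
Your outline captures the compactness framework the paper actually uses for Steps 1--3 (uniform sup-norm bound, uniform interior Schauder bounds via \cite[Theorem 7.2]{AngLor20}, Arzel\`a--Ascoli extraction of a subsequential limit $\ww$ solving $D_t\ww=\bm\calA^P\ww$), and your Step 5 matches the idea used in the paper for initial data in $C_c^{2+\alpha}(\R^d)$. However, the heart of the proof --- your Step 4, the identification $\ww(0,\cdot)=\f$ for general $\f\in C_b(\R^d)$ --- is left as an acknowledged gap, and the remedy you sketch is not clearly workable.

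The proposed barrier ``built from the Lyapunov function $\bm\varphi$'' rests on the claim that Hypothesis \ref{hyp-base}(iii) ``can be propagated to $\bm\calA^P_n$.'' This is false in general. On $B(2n)^c$ the operator $\bm\calA^P_n$ reduces to $\eta^0\Delta$ (no drift, no potential), and there is no reason why $\eta^0\Delta\varphi\le\lambda\varphi$ should hold for a $\lambda$ independent of $n$: for the Lyapunov functions that actually arise in Section \ref{sec:examples} (e.g.\ $\varphi(x)=e^{\hat\varepsilon(1+|x|^2)^\rho}$), the ratio $\Delta\varphi/\varphi$ blows up at infinity. So $\bm\varphi$ is not a uniform-in-$n$ supersolution for $D_t-\bm\calA^P_n$, and the barrier plan does not close.

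The paper's actual treatment of Step 4 takes a different, more concrete route. First, for $\f\in C_c^{2+\alpha}(\R^d)$ it uses the Schauder estimate of Theorem \ref{teo-Schauder}, which is uniform in $n$ because the coefficients of $\bm\calA^P_n$ coincide with those of $\bm\calA^P$ on the fixed compact domain once $n$ is large and the ellipticity constant $\eta^0$ is $n$-independent; this gives a uniform $C^{1+\frac\alpha2,2+\alpha}([0,T]\times B(R))$ bound and hence continuity up to $t=0$ of the limit. Second, it passes to $\f\in C_0(\R^d)$ by density, using the uniform sup-bound \eqref{stima_sol_Cauchy_1}. Third, for general $\f\in C_b(\R^d)$, it decomposes $\f=\varphi\f+(1-\varphi)\f$ with $\varphi\in C_c^\infty$, controls $\bm T_n^P(\cdot)(\varphi\f)$ by the $C_0$ case, and dominates the tail via the generalized maximum principle: the auxiliary functions $\bm w_1,\bm w_2,\bm w_3$ together with the row-sum bound \eqref{segno_V_P} yield $|(\bm T_n^P(\cdot)((1-\varphi)\f))_h|\le\|\f\|_\infty(\bm T_n^P(\cdot)((1-\varphi)\mathds 1))_h$ and $(\bm T_n^P(t)\mathds 1)_h\le e^{-\widetilde Mt}$, both uniformly in $n$. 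No Lyapunov barrier is involved. To complete your proof you would need to either implement this cutoff/maximum-principle argument or find a genuine uniform-in-$n$ barrier; as written, your Step 4 is not a proof.
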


\begin{proof}
Fix $\f\in C_b(\R^d)$. From interior Schauder estimates (see \cite[Theorem 7.2]{AngLor20}), for every compact set $E\subseteq(0,\infty)\times \R^d$ there exists a positive constant $K$, which depends on $E$, such that
\begin{align*}
\|\bm{T}^P_n(\cdot)\f\|_{C^{1+\frac{\alpha}{2},2+\alpha}(E)}\leq K\|\bm{T}^P_n(\cdot)\f\|_\infty\leq \widetilde K\max_{k=1,\ldots,m}\|f_k\|_\infty,  \qquad n\in\N,   
\end{align*}
where $\widetilde K=\sqrt{m}K\max\{e^{-MT},1\}$ and $T>0$ satisfies $E\subseteq (0,T]\times \R^d$. From Ascoli-Arzel\`a theorem, it follows that there exist a subsequence $(n_k)$ and a function ${\bm u}\in C^{1+\frac{\alpha}{2},2+\alpha}_{\rm loc}((0,\infty)\times \R^d)$ such that $({\bm T}^P_{n_k}(\cdot)\f)$ converges to $\bm u$ in $C^{1,2}(E)$ for every compact set $E\subseteq(0,\infty)\times \R^d$. Further, for every $T>0$ we have $\|\uu(t,\cdot)\|_\infty\leq \sqrt m\max\{e^{-Mt},1\}\max_{k=1,\ldots,m}\|f_k\|_\infty$ for every $t\in(0,T]$ and $D_t\uu=\bm\calA^P\uu$ on $(0,\infty)\times \R^d$, since $D_t\bm{T}^P_{n}(\cdot)\f=\bm\calA^P\bm{T}^P_n(\cdot)\f$ on $E$ if $n$ is large enough. 

To prove that $\uu={\bm T}^P(\cdot)\f$, it remains to show that $\uu$ can be continuously extended up to $t=0$ setting $\uu(0,\cdot)=\f$. Indeed, as we have already recalled, problem \eqref{prob_cauchy_1}(ii) admits a unique solution $\uu\in C([0,\infty)\times\R^d)\cap C^{1,2}((0,\infty)\times\R^d)$, which is bounded in every strip $[0,T]\times\R^d$. 

At first, we consider the case $\f\in C^{2+\alpha}_c(\R^d)$. From the Schauder estimates in Theorem \ref{teo-Schauder} we infer that, for every $R,T>0$, 
\begin{align*}
\|\bm{T}^P_n(\cdot)\f\|_{C^{1+\frac{\alpha}{2},2+\alpha}([0,T]\times B(R))}\leq K_{R,T}\|\f\|_{C^{2+\alpha}_b(\R^d)}, \qquad n\in\N,
\end{align*}
where $K_{R,T}$ is a positive constant which depends on $R$ and $T$ but is independent on $n\in\N$. Again Ascoli-Arzel\`a theorem implies that $\uu\in C^{1+\frac{\alpha}{2},2+\alpha}([0,T]\times B(R))$ for every $R>0$ and, consequently, $\uu={\bm T}^P(\cdot)\f$ and
$(\T_{n_k}^P(\cdot)\f)$ converges to  $\T^P(\cdot)\f$ in $C^{1,2}([0,T]\times B(R))$ for every $R,T>0$.
In particular, we have proved that
any subsequence of $(\T_n^P(t)\f)$, which converges in $C^{1,2}(E)$ for every compact set $E\subseteq (0,\infty)\times\R^d$, actually converges to $\T^P(\cdot)\f$, so that all the sequence
$(\T_n^P(t)\f)$ converges to $\T^P(\cdot)\f$ as $n$ tends to $\infty$ in $C^{1,2}(E)$ for every compact set $E\subseteq [0,\infty)\times \R^d$.

If $\f\in C_0(\R^d)$, then there exists a sequence $(\f_j)\subseteq C_c^{2+\alpha}(\R^d)$ which converges to $\f$ in $C_b(\R^d)$. Since, for every $T>0$ and every $h,k\in\N$,
\begin{eqnarray*}
\|\T_{n_k}(\cdot)\f_h-\T_{n_k}(\cdot)\f\|_{C_b([0,T]\times\R^d)}\le \sqrt m\max\{e^{-MT},1\}
\|\f_h-\f\|_{\infty},
\end{eqnarray*}
letting $k$ tend to $\infty$, we conclude that
\begin{eqnarray*}
\|\T(\cdot)\f_h-\uu\|_{C_b((0,T]\times\R^d)}\le \sqrt m\max\{e^{-MT},1\}
\|\f_h-\f\|_{\infty},
\end{eqnarray*}
so that $(\T(\cdot)\f_h)$ converges to $\uu$, uniformly in $[0,T]\times\R^d$ as $h$ tends to $\infty$. This shows that $\uu$ can be extended by continuity up to $t=0$, by setting $\uu(0,\cdot)=\f$. In particular, $\uu=\T^P(\cdot)\f$ and the whole sequence $(\T_n(\cdot)\f)$ converges to $\T^P(\cdot)\f$.

It remains to deal with the general case when $\f\in C_b(\R^d)$. We fix $R,T>0$, a function $\varphi\in C_c^\infty(\R^d)$ such that $\chi_{B(R)}\le\varphi\le\chi_{B(2R)}$, and split
\begin{align}
\label{split_TPmf}
{\bm T}^P_n(\cdot)\f={\bm T}^P_n(\cdot)(\varphi\f)+{\bm T}^P_n(\cdot)((1-\varphi)\f), \qquad n\in\N.   
\end{align}
We introduce the function 
$\bm{w}_1:={\bm T}^P_n(\cdot)((1-\varphi)\f)-\|\f\|_{\infty}{\bm T}^P_n(\cdot)((1-\varphi)\mathds{1})$,
which satisfies the conditions
$D_t\bm{w}_1-\bm\calA^P\bm{w}_1=\bm0$ in $(0,T]\times \R^d$ and $\bm{w}_1(0,\cdot)\leq \bm 0$ in $\R^d$.    
The maximum principle in \cite[Theorem 2.3]{AngLor20}
(which is inspired by \cite[Chapter 3, Section 8]{ProWin67}), 
whose proof works whenever the off-diagonal entries of the matrix $V^P$ are nonpositive, even if the sum of the elements of each row of $V^P$ is not nonnegative,\footnote{Notice that, in our operator, the sign of the potential term is the opposite of the sign of the potential term in the quoted paper.}
implies that $\bm{w}_1\leq \bm0$ on $[0,T]\times \R^d$.
Similar arguments applied to the function $\bm{w}_2=-{\bm T}^P_n(\cdot)((1-\varphi)\f)-\|\f\|_{\infty}{\bm T}^P_n(\cdot)((1-\varphi)\mathds{1})$, 
give $\bm{w}_2\leq\bm0$ on $[0,T]\times \R^d$. Hence, we infer that
\begin{align}
\label{stima_TP1-fif}
|({\bm T}^P_n(\cdot)((1-\varphi)\f))_h|
\le\|\f\|_{\infty}({\bm T}^P_n(\cdot)((1-\varphi)\mathds{1}))_h, \qquad h=1,\ldots,m, \ n\in\N,
\end{align}
in $[0,T]\times \R^d$. From \eqref{split_TPmf} and \eqref{stima_TP1-fif}, it follows that
\begin{align}
|(\bm{T}^P_n(t)\f)_h-f_h|
\leq & |(\bm{T}^P_n(t)(\varphi\f))_h-f_h|
+\|\f\|_{\infty}({\bm T}^P_n(t)((1-\varphi)\mathds{1}))_h \notag \\
= & |(\bm{T}^P_n(t)(\varphi\f))_h-f_h|
+\|\f\|_{\infty}({\bm T}^P_n(t)\mathds{1})_h-\|\f\|_{\infty}({\bm T}^P_n(t)(\varphi\mathds{1}))_h
\label{stima_cont_TPmf}
\end{align}
for every $t\in[0,T]$, $h=1,\ldots,m$ and $n\in\N$. Let us apply once again the maximum principle in \cite[Theorem 2.3]{AngLor20},  with $\bm\calA^P$ replaced by $\bm\calA^P_n+\widetilde MI$, where $\widetilde M=\min\{0,M\}$, to the function $\bm{w}_3$, defined as
$\bm{w}_3(t,\cdot):=e^{\widetilde Mt}{\bm T}^P_n(t)\mathds{1}-\mathds{1}$ for every $t\geq0$.
Since, from \eqref{segno_V_P}, 
\begin{align*}
(D_t\bm{w}_3-(\bm\calA^P_n+\widetilde MI)\bm{w_3})_h
=-\vartheta_n\sum_{k=1}^m v_{hk}^P+\widetilde M\leq\bm 0 \ \  \textrm{on }(0,T]\times \R^d, \qquad \bm{w_3}(0,\cdot)=\bm0 \ \ \textrm{on }\R^d,
\end{align*}
we infer that $\bm{w_3}\leq\bm0$ on $[0,T]\times \R^d$. Replacing in \eqref{stima_cont_TPmf}, it follows that
\begin{align}
\label{stima_cont_TPf_2}
|(\bm{T}^P_n(t)\f)_h-f_h|
\leq |(\bm{T}^P_n(t)(\varphi\f))_h-f_h|
+\|\f\|_{\infty}e^{-\widetilde Mt}-\|\f\|_{\infty}({\bm T}^P_n(t)(\varphi\mathds{1}))_h   
\end{align}
for every $t\in[0,T]$, $h=1,\ldots,m$ and $n\in\N$. Recalling that $\varphi\f,\varphi\mathds1\in C_0(\R^d)$, replacing $n$ with $n_k$ in \eqref{stima_cont_TPf_2} and letting $k$ tend to infinity, we deduce that
\begin{align*}
|u_h(t,\cdot)-f_h|
\leq |(\bm{T}^P(t)(\varphi\f))_h-f_h|
+\|\f\|_{\infty}e^{-\widetilde Mt}-\|\f\|_{\infty}({\bm T}^P(t)(\varphi\mathds{1}))_h,\qquad\;\,t\in (0,T]
\end{align*}
for every $h=1,\ldots,m$. Since $\varphi\f=\f$ and $\varphi\mathds1=\mathds1$ on $B(R)$, the continuity of the functions $\bm{T}^P(\cdot)(\varphi\f)$ and $ \bm{T}^P(\cdot)(\varphi\mathds1)$ in $[0,\infty)\times \R^d$ implies that $\uu$ can be continuously extended up to $t=0$ setting $\uu(0,\cdot)=\f$. This concludes the proof.  
\end{proof}

\begin{lemma}
Let $0\leq a<b$ and $h,k\in\{1,\ldots,m\}$. Then,
\begin{align}
&\int_{R(a,b)} (D_t \varphi(t,y) + \calA_k \varphi (t,y))p_{hk}(t,x,y)dtdy
-\sum_{j=1,j\neq k}^m\int_{R(a,b)}v_{jk}(y)\varphi(t,y)p_{hj}(t,x,y)dtdy \notag \\
=& \int_{\R^d} (p_{hk}(b,x,y)\varphi(b,y)-p_{hk}(a,x,y)\varphi(a,y))dy
\label{int_parti_nuclei_com_2}
\end{align}
for every $(t,x)\in (a,b)\times \R^d$,  $h,k=1,\dots, m$, $\varphi\in C^{1,2}_c(\overline{R(a,b)})$.
Finally, the same holds true if we replace $\bm\calA$ with $\bm\calA^P$ and $p_{hk}$ with $p_{hk}^P$ for every $h,k\in\{1,\ldots,m\}$.
\end{lemma}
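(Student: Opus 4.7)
The plan is to reduce \eqref{int_parti_nuclei_com_2} to the fundamental theorem of calculus applied to the scalar function
\[
G(t):=\int_{\R^d} p_{hk}(t,x,y)\varphi(t,y)dy=(\T(t)[\varphi(t,\cdot)\e_k])_h(x),\qquad t\in[a,b],
\]
so that the right-hand side of \eqref{int_parti_nuclei_com_2} is simply $G(b)-G(a)$. The heart of the matter is to show that $G\in C^1([a,b])$ and that $G'$ has precisely the form of the integrand on the left-hand side of \eqref{int_parti_nuclei_com_2}.

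Writing $G(t)=H(t,t)$ with $H(t,s):=(\T(t)[\varphi(s,\cdot)\e_k])_h(x)$, I would compute the two partial derivatives separately. For fixed $s$, the function $\bm\psi:=\varphi(s,\cdot)\e_k$ has compact support and is $C^2$ in $x$; by \cite[Proposition 2.4]{AddLor23} together with the interior Schauder estimates of \cite[Theorem A.2]{AngLor20}, the function $\bm u(\tau,\cdot):=\T(\tau)\bm\psi$ is the unique bounded classical solution of the Cauchy problem, and $D_\tau\bm u=\bm\calA\bm u$ is itself a bounded classical solution with initial datum $\bm\calA\bm\psi\in C_b(\R^d;\R^m)$. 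By uniqueness, $D_\tau\T(\tau)\bm\psi=\T(\tau)(\bm\calA\bm\psi)$ for every $\tau\geq 0$. A direct computation of $\bm\calA(\varphi(s,\cdot)\e_k)$ shows that its $k$-th component is $\calA_k\varphi(s,\cdot)$ while its $j$-th component, for $j\neq k$, equals $-v_{jk}\varphi(s,\cdot)$, so that
\[
\partial_tH(t,s)=\int_{\R^d} p_{hk}(t,x,y)\calA_k\varphi(s,y)dy-\sum_{j\neq k}\int_{\R^d} p_{hj}(t,x,y)v_{jk}(y)\varphi(s,y)dy.
\]
For the $s$-derivative, the uniform convergence (in $y$) of the difference quotients of $\varphi(\cdot,y)$, combined with the $L^1$-integrability of $p_{hk}(t,x,\cdot)$ granted by Proposition \ref{prop:prop_semigruppi}, yields $\partial_s H(t,s)=\int_{\R^d} p_{hk}(t,x,y)D_s\varphi(s,y)dy$ by dominated convergence. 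Summing $\partial_tH(t,t)+\partial_sH(t,t)$ and integrating over $(a,b)$ gives exactly \eqref{int_parti_nuclei_com_2}; the continuity of $G'$ needed to invoke the fundamental theorem of calculus follows from the continuity in $t$ of $D_t\varphi$, $\calA_k\varphi$ and $v_{jk}\varphi$ together with the Feller property of $(\T(t))$.

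The main delicacy is rigorously justifying $D_\tau\T(\tau)\bm\psi=\T(\tau)(\bm\calA\bm\psi)$ up to $\tau=0$, since the mere assumption $\varphi\in C^{1,2}_c$ only gives $\bm\psi\in C^2_c$, while the Schauder machinery is most comfortable on $C^{2+\alpha}$ data. A clean workaround is to first prove \eqref{int_parti_nuclei_com_2} for $\varphi\in C^{1+\alpha/2,2+\alpha}_c(\overline{R(a,b)})$, where all regularity is automatic, and then approximate a general $\varphi\in C^{1,2}_c$ by a sequence of such smooth functions (e.g., by mollification in $(t,y)$): both sides of \eqref{int_parti_nuclei_com_2} depend continuously on $\varphi$ in the $C^{1,2}$-norm, thanks again to $\|p_{hk}(t,x,\cdot)\|_1\le\sqrt{m}e^{-Mt}$, so the identity passes to the limit. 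An equivalent route is to prove the analogous identity on each ball $B(n)$ for the Dirichlet approximations $\T^{\mathcal D,P,n}(t)$, which are generated by sectorial operators (Remark \ref{rmk:analiticita}) making the $\tau$-differentiation unproblematic, and then let $n\to\infty$ via \eqref{conv_smgr_Dirich} and Proposition \ref{prop:mono_nuclei}. Finally, the parallel statement for $\bm\calA^P$ and $p_{hk}^P$ follows verbatim upon replacing each off-diagonal $v_{jk}$ by $v_{jk}^P=-|v_{jk}|$, since no sign property of $v_{jk}$ enters the argument.
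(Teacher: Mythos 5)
Your proposal is essentially correct, but the primary line of argument contains a step that is not justified as written, and you then acknowledge this and repair it: the comparison between the two routes is worth spelling out.

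In your second paragraph you assert, citing \cite[Proposition 2.4]{AddLor23} and interior Schauder estimates, that for $\bm\psi=\varphi(s,\cdot)\bm e_k\in C^2_c(\R^d;\R^m)$ one has $D_\tau\bm u=\bm\calA\bm u$ with $\bm\calA\bm u$ a \emph{bounded} classical solution attaining the initial datum $\bm\calA\bm\psi$ continuously. Interior Schauder estimates do not reach $\tau=0$, and without additional regularity of $\bm\psi$ (it is only $C^2_c$, not $C^{2+\alpha}_c$) there is no a priori control of $\bm\calA\bm u(\tau,\cdot)$ as $\tau\to 0$, so the uniqueness argument cannot be invoked directly. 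You then correctly identify this as the delicate point, and your two proposed workarounds are both viable. The second workaround is precisely what the paper does: it applies \eqref{int_parti_nuclei_com_2} first to the Dirichlet approximations $\T^{\mathcal D,n}$ on $B(n)$ and lets $n\to\infty$ via \eqref{conv_smgr_Dirich}. The key observation the paper exploits, and which renders any $C^{2+\alpha}$ detour unnecessary, is that the domain of the sectorial Dirichlet realization described in Remark \ref{rmk:analiticita} is $\{\f\in\bigcap_{p<\infty}W^{2,p}(B(n)):f_k=0\text{ on }\partial B(n),\ \bm\calA\f\in C_b(B(n))\}$, which already contains $\bm\varphi(t,\cdot)=\varphi(t,\cdot)\bm e_k\in C^2_c(B(n))$; analyticity then gives $D_\tau\T^{\mathcal D,n}(\tau)\bm\varphi(t,\cdot)=\T^{\mathcal D,n}(\tau)\bm\calA\bm\varphi(t,\cdot)$ up to $\tau=0$ with no further approximation. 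Your first workaround (prove the identity for $\varphi\in C^{1+\alpha/2,2+\alpha}_c$, then mollify) also works, but would in turn need Schauder estimates up to $t=0$ (as in Theorem \ref{teo-Schauder}) to justify the $C^{2+\alpha}$ case, and a stability-under-mollification argument on both sides of \eqref{int_parti_nuclei_com_2}; so it is a longer route to the same conclusion. Everything else in your argument (the computation of $\bm\calA(\varphi\bm e_k)$ componentwise, the two-variable splitting $G(t)=H(t,t)$ which is the chain rule the paper writes in one line, and the verbatim transfer to $\bm\calA^P$) matches the paper's reasoning.
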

\begin{proof}
Let $\varphi\in C_c^{1,2}(\overline {R(a,b)})$, fix $k\in\{1,\ldots,m\}$ and observe that the function $\bm\varphi=\varphi{\bm e}_k$ belongs to
the domain of the realization of $\bm\calA$ in $C_b(B(n))$ with homogeneous Dirichlet boundary conditions for every 
$n>R$, where $R>0$ satisfies ${\rm supp}(\varphi(t,\cdot))\subseteq B(R)$ for every $t\in[a,b]$. Since $(\T^{\mathcal D,n}(t))$ is an analytic semigroup (see Remark \ref{rmk:analiticita}), it follows that 
\begin{align*}
D_t(\T^{\mathcal D,n}(t)\bm\varphi(t,\cdot))
= &\T^{\mathcal D,n}(t)(\bm\calA\bm\varphi(t,\cdot))+\T^{\mathcal D,n}(t)D_t\bm\varphi(t,\cdot)
\end{align*}
on $B(n)$ for every $n>R$ and $t\geq0$. From \eqref{conv_smgr_Dirich}, letting $n$ tend to infinity we infer that the function $t\mapsto (\T(t)\bm{\varphi}(t,\cdot))(x)$ is differentiable in $(0,\infty)$ for every $x\in\R^d$ and
\begin{align*}
D_t(\T(t)\bm\varphi(t,\cdot))
= \T(t)(\bm\calA\bm\varphi(t,\cdot))+\T(t)D_t\bm\varphi(t,\cdot)
\end{align*}
in $\R^d$ for every $t\in (0,\infty)$. In particular, the function $(t,x)\mapsto D_t(\T(t)\bm{\varphi}(t,\cdot))(x)$ is continuous in $[0,\infty)\times\R^d$ and
\begin{align*}
(D_t{\bm T}(\cdot)(\varphi{\bm e}_k))_h
= & \int_{\R^d}\bigg (D_t\varphi(\cdot,y)p_{hk}(\cdot,\cdot,y) + \sum_{j=1}^m(\bm{\calA}(\varphi {\bm e}_k))_j(\cdot,y)p_{hj}(\cdot,\cdot,y)\bigg )dy
\end{align*}
in $(a,b)\times\R^d$, for every $h=1,\ldots,m$. Integrating both the sides of the above equality with respect to $t$ between $a$ and $b$ yields the assertion.
\end{proof}

\begin{remark}
\label{rmk_kernels}
{\rm The previous proof shows that formula \eqref{int_parti_nuclei_com_2} is satisfied also by the kernels $p_{hk}^{\mathcal{D},n}$ and $p_{hk}^{\mathcal{D},P,n}$ with $R(a,b)$ being replaced by $R_n(a,b)$.}
\end{remark}

Now, we prove the following regularity result for the kernels $p_{hk}$ and $p_{hk}^P$, with $h,k\in\{1,\ldots,m\}$. 

\begin{proposition}
\label{prop 2.3.1 tesi}
Fix $x\in\R^d$ and $0\leq a_0<b_0\leq T$. Suppose that $p_{hk}(\cdot,x,\cdot)$ $($resp. $p_{hk}^P(\cdot,x,\cdot))$ $\in L^r_{\rm loc}(R(a_0,b_0))$ for some $r\in(1,\infty)\cup\{\infty\}$, some $h=1,\ldots,m$ and every $k=1,\ldots,m$. Then, for every $a_0<a<b<b_0$, every $x\in\R^d$ and every $n\in\N$, the function $p_{hk}(\cdot,x,\cdot)$ $($resp. $p_{hk}^P(\cdot,x,\cdot))$ belongs to $\mathcal H^{r,1}(R_n(a,b))$, if $r\in\R$, and 
to $\mathcal H^{q,1}(R_n(a,b))$ for every $q\in (1,\infty)$, if $r=\infty$, for every $k=1,\ldots,m$. In particular, if $r>d+2$, then $p_{hk}(\cdot,x,\cdot)$ $($resp. $p_{hk}^P(\cdot,x,\cdot))$ belongs to $C([a,b];C(\overline{B(n)}))$ for every $k=1,\ldots,m$ and every $n\in\N$.
\end{proposition}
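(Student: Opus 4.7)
The plan is to interpret each $p_{hk}(\cdot, x, \cdot)$ as a distributional solution of a scalar parabolic equation and then apply $L^r$-parabolic regularity. Restricting the identity \eqref{int_parti_nuclei_com_2} to test functions $\varphi \in C^{1,2}_c(R(a_0, b_0))$ with compact support in the open set $R(a_0,b_0)$ kills the boundary contributions at $t = a, b$. One thus reads off that, for the fixed pair $(x, h)$, each $p_{hk}(\cdot, x, \cdot)$ solves on $R(a_0, b_0)$ in the distributional sense
\begin{equation*}
D_t p_{hk} = \calA_k^* p_{hk} - \sum_{j \ne k} v_{jk}(\cdot) p_{hj},
\end{equation*}
where $\calA_k^* u := \mathrm{div}(Q^k \nabla u) - \mathrm{div}(b^k u) - v_{kk} u$ is the formal adjoint of $\calA_k$; the same holds, mutatis mutandis, for $p_{hk}^P$. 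Since $v_{jk}$, $b^k$, $v_{kk}$ are in $C^\alpha_{\rm loc}$ (hence locally bounded) and each $p_{hj}(\cdot, x, \cdot) \in L^r_{\rm loc}(R(a_0, b_0))$ by assumption, the right-hand side of this equation lies in $L^r_{\rm loc}(R(a_0, b_0))$ (and in $L^q_{\rm loc}$ for every $q \in (1, \infty)$ when $r = \infty$).

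I fix $a_0 < a < b < b_0$ and $n \in \N$, pick intermediate parameters $a_0 < a' < a$, $b < b' < b_0$, $n' > n$, and take a cut-off $\eta \in C^\infty_c((a',b') \times B(n'))$ with $\eta \equiv 1$ on $(a,b) \times B(n)$. The cleanest implementation is to perform the $L^r$-estimates first on the smooth Dirichlet kernels $p_{hk}^{\mathcal D, P, n'}$ which, by Remarks \ref{rmk:analiticita} and \ref{rmk_kernels}, are classical $C^{1,2}$-solutions of the analogue of the previous display on $R_{n'}(a',b')$ and satisfy \eqref{int_parti_nuclei_com_2} on balls. Classical parabolic $L^r$-theory for divergence-form operators with H\"older-continuous coefficients then yields $\mathcal H^{r,1}$-bounds on $\eta\, p_{hk}^{\mathcal D,P,n'}$ with constants depending only on the $L^r$-norms on $R_{n'}(a',b')$ of the kernels themselves, uniformly in $n'$. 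Letting $n' \to \infty$ and exploiting Proposition \ref{prop:mono_nuclei}, \eqref{conv_smgr_Dirich}, and the pointwise identification in Remark \ref{rmk:limit_kernel}, one transfers the bound to $p_{hk}(\cdot, x, \cdot) \in \mathcal H^{r,1}(R_n(a,b))$. The case $r = \infty$ follows by applying the argument with any $q \in (1, \infty)$ in place of $r$.

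For the final assertion when $r > d + 2$, a further bootstrap is needed. Once $p_{hk}(\cdot, x, \cdot) \in \mathcal H^{r,1}$, the spatial gradient lies in $L^r_{\rm loc}$ and the equation can be rewritten in non-divergence form as
\begin{equation*}
D_t p_{hk} - \sum_{i,j=1}^d q_{ij}^k\, D_{ij} p_{hk} = f_{hk},
\end{equation*}
where $f_{hk} \in L^r_{\rm loc}$ gathers all the first-order and zero-order contributions involving $p_{hk}$, $\nabla p_{hk}$, and the coupling $\{p_{hj}\}_{j \ne k}$. The Krylov--Solonnikov $L^r$-theory for non-divergence parabolic equations (applicable thanks to Hypothesis \ref{hyp-base}(ii) and the $C^{1+\alpha}_{\rm loc}$ regularity of $Q^k$) produces $p_{hk}(\cdot, x, \cdot) \in W^{1,2}_r(R_n(a,b))$, and the parabolic Sobolev embedding $W^{1,2}_r((a,b) \times B(n)) \hookrightarrow C([a,b]; C(\overline{B(n)}))$, valid whenever $r > (d+2)/2$ and a fortiori for $r > d + 2$, concludes the argument.

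The main technical obstacle is the rigorous handling of the distributional formulation together with the non-local coupling source. The two-stage bootstrap---first to $\mathcal H^{r,1}$ via divergence-form $L^r$-theory, then to $W^{1,2}_r$ via non-divergence form theory---is what makes the argument work, since divergence-form theory alone does not supply second spatial derivatives. Running the estimates on the smooth Dirichlet approximations $p_{hk}^{\mathcal D, P, n'}$ and only afterwards passing to the limit sidesteps any subtleties of differentiating $p_{hk}$ directly in the distributional sense.
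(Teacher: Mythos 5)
Your overall plan --- read $p_{hk}(\cdot,x,\cdot)$ as a distributional solution of a scalar parabolic equation with a lower-order source involving the coupling terms, localize, and invoke $L^r$ parabolic regularity --- is the same blueprint that the paper implements, following \cite[Lemma 3.1]{MetPalRha10}. However, the specific route you take contains two genuine gaps.

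First, the detour through the Dirichlet kernels $p_{hk}^{\mathcal D,P,n'}$ does not cover the proposition as stated. The statement treats the two families $\{p_{hk}\}$ and $\{p^P_{hk}\}$ as separate cases: the hypothesis $p_{hk}(\cdot,x,\cdot)\in L^r_{\rm loc}$ is supposed to give the conclusion for $p_{hk}$, without any integrability information on $p^P_{hk}$. Your argument estimates $\eta\,p_{hk}^{\mathcal D,P,n'}$ in terms of $\|p_{hj}^{\mathcal D,P,n'}\|_{L^r}$ and then lets $n'\to\infty$. By Proposition \ref{prop:mono_nuclei} these Dirichlet kernels increase to $p^P_{hk}$, not to $p_{hk}$, so the limit bound would read $p_{hk}^P\in\mathcal H^{r,1}$. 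More seriously, a uniform-in-$n'$ bound for $\|p_{hj}^{\mathcal D,P,n'}\|_{L^r}$ is only available when the hypothesis is on $p^P_{hk}$ (via monotone convergence); when the hypothesis concerns only $p_{hk}$, neither the $P$-Dirichlet kernels nor the non-$P$ ones $p^{\mathcal D,n'}_{hk}$ (which are not monotone) are controlled in $L^r$ uniformly in $n'$, because the inequality $|p_{hk}|\le p^P_{hk}$ points in the wrong direction. The paper sidesteps the issue entirely: it applies the cutoff $\vartheta\vartheta_n\varphi$ directly in \eqref{int_parti_nuclei_com_2} for the full-space kernels and derives, for $\mathfrak p^n_{hk}=\vartheta_n p_{hk}$, the bound $\big|\int \vartheta\mathfrak p^n_{hk}(D_t\varphi+\calA^k_1\varphi)\big|\le c_n\sum_j\|p_{hj}\|_{L^r}\|\varphi\|_{W^{0,1}_{r'}}$, after which \cite[Lemma 3.1]{MetPalRha10} applies. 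No approximation-by-balls is used at this stage.

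Second, your bootstrap for the case $r>d+2$ does not go through under Hypotheses \ref{hyp-base}: you rewrite the equation in non-divergence form so as to place it in the Krylov--Solonnikov $W^{1,2}_r$ framework, but the term $\mathrm{div}(b^k p_{hk})$ appearing in the formal adjoint $\calA_k^*$ can be expanded pointwise only if $b^k$ is differentiable. Hypothesis \ref{hyp-base}(i) only assumes $b_i^h\in C^\alpha_{\rm loc}$, so $\mathrm{div}(b^k)$ is not defined and the source $f_{hk}$ is not a locally $L^r$ function but a genuine distribution of order one. The paper avoids second derivatives altogether: from $\mathfrak p^n_{hk}\in\mathcal H^{r,1}$ with $r>d+2$, it invokes \cite[Proposition 3.3]{MetPalRha10} and \cite[Corollary 7.5]{Kry00} to get $\mathfrak p^n_{hk}\in C^\nu([a,b];C^\theta_b(\R^d))$ directly from the $\mathcal H^{r,1}$ bound, which only needs the divergence-form information.

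A fix closer to the paper: drop the Dirichlet-kernel detour, apply \eqref{int_parti_nuclei_com_2} with the test function $\vartheta\vartheta_n\varphi$ to $p_{hk}$ itself (the identity holds for these kernels directly, with the boundary terms killed by the time-cutoff $\vartheta$), isolate $D_t\varphi+\calA^k_1\varphi$ and bound the remaining terms by H\"older against $\|\varphi\|_{W^{0,1}_{r'}}$ using the $L^r_{\rm loc}$ hypothesis, then appeal to \cite[Lemma 3.1]{MetPalRha10} for the $\mathcal H^{r,1}$ conclusion and to \cite[Corollary 7.5]{Kry00} for the H\"older embedding when $r>d+2$.
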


\begin{proof}
The proof of the smoothness of the functions $p_{hk}(\cdot,x,\cdot)$ and
$p_{hk}^P(\cdot,x,\cdot)$ can be obtained by arguing as in the proof of \cite[Lemma 3.1]{MetPalRha10}. We limit ourselves to showing that, in our situation, we gain the estimate which is crucial in the proof of the quoted result. For simplicity, we consider the case $r\in (1,\infty)$, since the case $r=\infty$ can be proved by means of similar arguments by replacing $r$ with $q\in(1,\infty)$.

We fix $n\in\N$, $x\in\R^d$ and prove  that the kernel $p_{hk}(\cdot,x,\cdot)$ belongs to ${\mathcal H}^{r,1}(R_n(a,b))$ for every $k=1,\ldots,m$. Let $\vartheta\in C_c^\infty(\R)$ be such that $\chi_{[a,b]}\le\vartheta\le\chi_{[a_0,b_0]}$ and let $\vartheta_n\in C^\infty_c(\R^d)$ be such that $\chi_{B(n)}\leq \vartheta_n\leq \chi_{B(2n)}$ for every $n\in\N$. Further, we set $\mathfrak p_{hk}^n(t,x,y):=\vartheta_n(y)p_{hk}(t,x,y)$ for every $t\in(0,T)$, $n\in\N$ and $y\in \R^d$.

Applying \eqref{int_parti_nuclei_com_2} to the function $\vartheta\vartheta_n\varphi$, with $\varphi\in C^{\infty}_c(\overline{R(a_0,b_0)})$, we deduce that
\begin{align*}
& \int_{R(0,T)} p_{hk}(t,x,y)(\vartheta_n(y)D_t(\vartheta\varphi(\cdot,y))(t)+\vartheta(t)(\calA_k(\vartheta_n\varphi(t,\cdot)))(y)dtdy \notag \\
& -\sum_{j=1,j\neq k}^m\int_{R(0,T)}v_{jk}(y)\vartheta(t)\vartheta_n(y)\varphi(t,y)p_{hj}(t,x,y)dtdy=0, 
\end{align*}
which gives
\begin{align*}
& \int_{R(0,T)}\vartheta(t)\mathfrak{p}_{hk}^n(t,x,y)\big (D_t\varphi(t,y)+\calA^k_1\varphi(t,y)\big )dtdy \\
=& -  \int_{R(0,T)}\vartheta(t)\mathfrak{p}_{hk}^n(t,x,y)\big(\langle G^k(y)+ b^k(y),\nabla \varphi(t,y)\rangle
-v_{kk}(y)\varphi(t,y)\big)dtdy \\
& -\int_{R(0,T)}\vartheta'(t)\varphi(t,y)\mathfrak{p}_{hk}^{n}(t,x,y)dtdy \\
& - \int_{R(0,T)}\vartheta(t)p_{hk}(t,x,y)\big(2\langle Q^k(y)\nabla\varphi(t,y),\nabla \vartheta_n(y)\rangle+\varphi(t,y)\calA_1^k\vartheta_n(t,y) \\
&
\ \ \ \qquad\quad\,+\varphi(t,y)\langle G^k(y)+b^k(y),\nabla \vartheta_n(t,y)\rangle\big)dtdy \\
&+\sum_{j=1,j\neq k}^m\int_{R(0,T)}v_{jk}(y)\mathfrak{p}_{hj}^{n}(t,x,y)\vartheta(t)\varphi(t,y)dtdy,
\end{align*}
where
$\calA^k_1:=\sum_{i,j=1}^dq_{ij}^kD^2_{ij}$ and $(G^k)_j=\sum_{i=1}^d D_iq_{ij}^k$ for every $j=1,\ldots,d$. From H\"older's inequality and the local boundedness of the coefficients of the operator $\bm\calA$, it follows that 
\begin{align*}
\bigg |\int_{R(0,T)}\!\vartheta \mathfrak{p}^n_{hk}(t,x,y)(D_t\varphi(t,y)\!+\!\calA^k_1\varphi(t,y))dtdy\bigg |   \leq c_n\sum_{j=1}^m\|p_{hj}(\cdot,x,\cdot)\|_{L^r(R_{2n}(a_0,b_0))}\|\varphi\|_{W^{0,1}_{r'}(R(0,T))} 
\end{align*}
for some positive constant $c_n$, independent of $\varphi$.

From now on, the proof follows the same lines as those of \cite[Lemma 3.1]{MetPalRha10} and allows us to prove that $\mathfrak p_{hk}^n(\cdot,x,\cdot)\in\mathcal H^{r,1}(R(a,b))$ for every $n\in\N$ and every $k=1,\ldots,m$. Since $\mathfrak p_{hk}^n(\cdot,x,\cdot)=p_{hk}(\cdot,x,\cdot)$ on $(0,\infty)\times B(n)$, we get the assertion. To prove the smoothness of $p_{hk}(\cdot,x,\cdot)$, it is enough to notice that, if $r>d+2$, then the same computations as in \cite [Proposition 3.3]{MetPalRha10} (see also \cite[Corollary 7.5]{Kry00}) give $\mathfrak p_{hk}^n(\cdot,x,\cdot)\in C^{\nu}([a,b];C^\theta_b(\R^d))$ for some $\nu,\theta>0$.

The result for the kernel $p_{hk}^P(\cdot,x,\cdot)$ can be proved arguing in the same way.
\end{proof}

\section{Integrability of Lyapunov functions}
\label{sec:lyapunov}

In this section we prove that Lyapunov functions for $\bm\calA^P$ and time-dependent Lyapunov functions for the operator $D_t+\bm\calA^P$ (see Definition \ref{def:time_dep_Lyap}) are integrable with respect to the measure $p_{hk}^P(t,x,y)dy$ for every $t\in(0,T]$, $x\in\R^d$ and $h,k=1,\ldots,m$.

\begin{definition}
\label{def:time_dep_Lyap}
Given $T>0$ and a function $\nu\in C^{1,2}((0,T)\times \R^d)\cap C([0,T]\times \R^d)$, we say that the $\R^m$-valued function ${\bm\nu}=(\nu,\ldots,\nu)$ is a time-dependent Lyapunov function for the operator ${\bm \calL}:=D_t+{\bm\calA}^P$ $($with respect to $\varphi$ and $g)$ if  $(i)$ $\lim_{|x|\to\infty}\nu(t,x)=\infty$, uniformly with respect to $t$ in compact subsets of $(0,T]$, $(ii)$ $\nu\leq \varphi$, and $(iii)$ there exists a function $g\in L^1(0,T)$ such that ${\bm\calL}{\bm \nu}(t,\cdot)\leq g(t)\bm{\nu}(t,\cdot)$ in $(0,T)\times \R^d$. 
\end{definition}

\begin{theorem}
\label{thm:int_time_lyap_fct}
If ${\bm\nu}$ is a time-dependent Lyapunov function for the operator ${\bm \calL}$ with respect to $\varphi$ and $g$, then the functions ${\bm T}^{\mathcal{D},P,n}(t){\bm\nu(t,\cdot)}$ $(n\in\N)$ and
${\bm T}^P(t){\bm\nu(t,\cdot)}$ are well-defined for every $t\in[0,T]$ and can be estimated from above by the function 
$t\mapsto e^{G(t)}\bm{\nu}(0,\cdot)$,
where $G(t)=\displaystyle\int_0^tg(s)ds$ for every $t\in [0,T]$.
\end{theorem}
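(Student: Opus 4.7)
The plan is a two-stage argument: I will first establish the bound for the Dirichlet-realized semigroup $\bm T^{\mathcal{D},P,n}$ on each ball $B(n)$, then pass to the limit $n\to\infty$ exploiting the monotone convergence $p^{\mathcal{D},P,n}_{hk}\uparrow p^P_{hk}$ from Proposition \ref{prop:mono_nuclei} (using the pointwise versions fixed in Remark \ref{rmk:limit_kernel}). For any $t\in[0,T]$ and $n\in\N$, $\bm\nu(t,\cdot)$ is continuous on $\R^d$ and hence bounded on $\overline{B(n)}$, so $\bm T^{\mathcal{D},P,n}(t)\bm\nu(t,\cdot)$ is well defined via \eqref{int_form_smgr_dirich_pos}; I set
\begin{align*}
I^n_h(t,x):=\sum_{k=1}^m\int_{B(n)}p^{\mathcal{D},P,n}_{hk}(t,x,y)\nu(t,y)dy
\end{align*}
and aim at the $n$-uniform estimate $I^n_h(t,x)\le e^{G(t)}\nu(0,x)$.

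The core computation applies the kernel identity \eqref{int_parti_nuclei_com_2} in its Dirichlet version (Remark \ref{rmk_kernels}, with $p_{hk}$ and $R(a,b)$ replaced by $p^{\mathcal{D},P,n}_{hk}$ and $R_n(a,b)$) to the test function $\varphi=\nu$ on $[0,t]$, and then sums the resulting identities over $k=1,\dots,m$; in view of the initial condition $\bm T^{\mathcal{D},P,n}(0)=I$, the right-hand side collapses to $I^n_h(t,x)-\nu(0,x)$. Because $\bm\nu=(\nu,\dots,\nu)$ has equal components, $(\bm\calA^P\bm\nu)_k=\mathcal{A}^P_k\nu-\sum_{\ell\neq k}v^P_{k\ell}\nu$, so the Lyapunov inequality $D_s\nu+(\bm\calA^P\bm\nu)_k\le g(s)\nu$ reads $D_s\nu+\mathcal{A}^P_k\nu\le g(s)\nu+\sum_{\ell\neq k}v^P_{k\ell}\nu$. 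Inserting this bound yields
\begin{align*}
I^n_h(t,x)-\nu(0,x)\le\int_0^t\!g(s)I^n_h(s,x)ds+\!\sum_{k}\sum_{\ell\neq k}\int_{R_n(0,t)}\!v^P_{k\ell}\nu\,p^{\mathcal{D},P,n}_{hk}\,dsdy-\!\sum_{k}\sum_{j\neq k}\int_{R_n(0,t)}\!v^P_{jk}\nu\, p^{\mathcal{D},P,n}_{hj}\,dsdy,
\end{align*}
and the key algebraic observation is that the two double sums are equal after renaming the summation indices (each of them equals $\sum_{a\neq b}\int v^P_{ab}\nu \,p^{\mathcal{D},P,n}_{ha}\,dsdy$) and therefore cancel. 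This cancellation -- which makes essential use of the special structure $\nu_1=\cdots=\nu_m$ and absorbs all of the off-diagonal coupling carried by $V^P$ -- is what enables the vector-valued argument in the absence of a dominating scalar semigroup. Gronwall's lemma applied to the residual inequality $I^n_h(t,x)\le\nu(0,x)+\int_0^t g(s)I^n_h(s,x)ds$ then yields $I^n_h(t,x)\le e^{G(t)}\nu(0,x)$.

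To finish, I pass to the limit $n\to\infty$. Proposition \ref{prop:mono_nuclei} gives $I^n_h(t,x)\uparrow(\bm T^P(t)\bm\nu(t,\cdot))_h(x)$ by monotone convergence (taking $\nu\ge 0$, which is the case of interest; otherwise one splits $\nu=\nu_+-\nu_-$, with $\nu_-$ bounded since $\nu\to\infty$ at infinity), and the $n$-uniform bound from the first stage transfers to the limit, giving both the well-definedness and the claimed estimate. The hardest step will be the rigorous application of \eqref{int_parti_nuclei_com_2} to $\varphi=\nu$: the cited lemma requires $\varphi\in C^{1,2}_c(\overline{R_n(a,b)})$, which in the Dirichlet setting amounts to vanishing near $\partial B(n)$, whereas $\nu$ does not. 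I will handle this by approximating $\nu$ with $\chi_j\nu$ for cutoffs $\chi_j\in C^\infty_c(B(n))$ increasing to $1$ in $B(n)$ and by showing that the remainder terms involving $\nabla\chi_j$ and $D^2\chi_j$ vanish as $j\to\infty$, leveraging the boundary vanishing of the Dirichlet kernels $p^{\mathcal{D},P,n}_{hk}(s,x,\cdot)$ together with the regularity established in Proposition \ref{prop 2.3.1 tesi}.
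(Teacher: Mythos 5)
Your strategy is genuinely different from the paper's. The authors never apply the kernel identity \eqref{int_parti_nuclei_com_2} to $\nu$; instead they apply the analytic semigroups $\bm T^P_n$ (on all of $\R^d$, not the Dirichlet ones) to the smooth \emph{truncations} $\psi_{\sigma,\varepsilon}\circ\nu$ of $\nu$, differentiate in time, let $\varepsilon\to 0$, sum over $k$, and crucially invoke the sign condition $\sum_k v^P_{\ell k}\ge 0$ (reducing to $M\ge 0$ in a separate step) to discard a term, before applying Gronwall and then letting $\sigma\to\infty$ and $n\to\infty$; the Dirichlet bound is deduced at the very end by monotonicity. Your cancellation observation (which is correct -- reindexing shows the two double sums coincide) is an elegant alternative that absorbs the coupling \emph{exactly} and, notably, dispenses with any case distinction on the sign of $M$. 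That part of your plan is sound and worth keeping.

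However, there is a genuine gap in the step you yourself flag as the hardest. Applying \eqref{int_parti_nuclei_com_2} to $\chi_j\nu$ and subtracting the main term yields the remainder $R^{(k)}_j=\int_{R_n(0,t)}\big[\nu\,\mathrm{div}(Q^k\nabla\chi_j)+2\langle Q^k\nabla\chi_j,\nabla\nu\rangle+\nu\langle b^k,\nabla\chi_j\rangle\big]p^{\mathcal D,P,n}_{hk}\,ds\,dy$, and this does \emph{not} vanish as $j\to\infty$. The terms with $\nabla\chi_j$ against $p^{\mathcal D,P,n}_{hk}$ are indeed $O(1/j)$ because the kernel vanishes linearly near $\partial B(n)$, but the $D^2\chi_j$ piece scales as $j^{2}\cdot j^{-1}\cdot j^{-1}=O(1)$ and survives. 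Integrating by parts to shift one derivative off $\chi_j$ isolates the nonvanishing part as $-\int\nu\langle Q^k\nabla\chi_j,\nabla p^{\mathcal D,P,n}_{hk}\rangle\,dy$, whose limit is the boundary flux $\int_0^t\!\int_{\partial B(n)}\nu\,(\partial_n p^{\mathcal D,P,n}_{hk})\,\langle Q^k n,n\rangle\,d\sigma\,ds$, which is strictly negative (Hopf lemma, $p^{\mathcal D,P,n}_{hk}$ vanishing on $\partial B(n)$ and positive inside). So the claim ``the remainder terms vanish'' is false, and Proposition \ref{prop 2.3.1 tesi} -- which concerns interior $L^r$/$\mathcal H^{r,1}$ regularity of the whole-space kernels -- would not give the boundary vanishing rate you would need anyway. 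Your argument can be \emph{repaired}: since the surviving remainder is nonpositive, and since it appears as an additive term on the right of the inequality you are building before Gronwall, it can simply be dropped. But that requires establishing $C^1$-smoothness of the Dirichlet kernel up to $\partial B(n)$ and the sign of its normal derivative, none of which is addressed in your sketch, and in particular the mechanism is opposite to the one you propose (a favorable sign, not a vanishing limit).
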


\begin{proof}
We split the proof into three steps. In the first two steps we prove the assertion for the function $\T^P(\cdot)\bm\nu$, assuming first that the constant $M$ in \eqref{segno_V_P} is nonnegative (Step 1) and then addressing the general case (Step 2). Finally, in Step 3, we prove the assertion for the function $\T^{\mathcal{D},P,n}(\cdot)\bm\nu$.

{\it Step 1}. We fix $\sigma>0$ and, for every $\varepsilon\in(0,1)$, let $\psi_{\sigma,\varepsilon}\in C^{\infty}([0,\infty))$ be such that $\psi_{\sigma,\varepsilon}(t)=t$ for every $t\in [0,\sigma]$, $\psi_{\sigma,\varepsilon}=\sigma+\frac{1}{2}\varepsilon$ on $(\sigma+\varepsilon,\infty)$, $0\le\psi_{\sigma,\varepsilon}'\le 1$ and $\psi''_{\sigma,\varepsilon}\leq0$ in $[0,\infty)$. 
Further, for every $n\in\N$ we consider the operator ${\bm\calA}^P_n$, defined on smooth functions $\f$ by \eqref{op_vett_tronc}, where $\vartheta_n\in C^\infty_c(\R^d)$ satisfies $\chi_{B(n)}\leq \vartheta_n\leq \chi_{B(2n)}$.

We now fix $k\in\{1,\ldots,m\}$ and observe that the function ${\bm \psi}_{k,\sigma,\varepsilon}=(\psi_{\sigma,\varepsilon}\circ\nu){\bm e}_k$ belongs to $C^{1,2}_b((0,T)\times\R^d)$. In particular, for every $t\in (0,T)$, the function ${\bm \psi}_{k,\sigma,\varepsilon}(t,\cdot)$ belongs to the domain of $\bm\calA^P_n$, which generates the analytic semigroup
$({\bm T}^P_n(t))$ (see Remark \ref{rmk:analiticita}). Hence, the function 
${\bm T}^P_n(\cdot){\bm \psi}_{k,\sigma,\varepsilon}$ is differentiable with respect to time in $[0,T]\times\R^d$ and
$D_t{\bm T}^P_n(\cdot){\bm \psi}_{k,\sigma,\varepsilon}
={\bm T_n^P(\cdot)}{\bm \calA}^P_n{\bm \psi}_{k,\sigma,\varepsilon}
+{\bm T_n^P(\cdot)}D_t{\bm \psi}_{k,\sigma,\varepsilon}$
in $[0,T]\times\R^d$.

Note that $({\bm\calA}^P_n{\bm \psi}_{k,\sigma,\varepsilon})_{\ell}
= -\vartheta_nv_{\ell k}^P(\psi_{\sigma,\varepsilon}\circ\nu)$, if
$\ell\neq k$ and
\begin{align*}
&({\bm\calA}^P_n{\bm \psi}_{k,\sigma,\varepsilon})_k
= (\psi_{\sigma,\varepsilon}'\circ\nu)\calA^k_n\nu+\vartheta_nv_{kk}[\nu(\psi_{\sigma,\varepsilon}'\circ \nu)-\psi_{\sigma,\varepsilon}\circ\nu] \\ 
&\phantom{({\bm\calA}^P_n{\bm \psi}_{k,\sigma,\varepsilon})_k}\;\;\;\;\; +(\psi_{\sigma,\varepsilon}''\circ\nu)[ \vartheta_n\langle Q^k\nabla \nu,\nabla\nu\rangle  +\eta^0(1-\vartheta_n)|\nabla \nu|^2]\\
&\phantom{({\bm\calA}^P_n{\bm \psi}_{k,\sigma,\varepsilon})_k}\le (\psi_{\sigma,\varepsilon}'\circ\nu) \calA^k_n\nu-\vartheta_nv_{kk}(\psi_{\sigma,\varepsilon}\circ\nu)\chi_{\{\nu>\sigma+\varepsilon\}}.
\end{align*}
Here, $\calA^\ell_n$, $\ell=1,\ldots,m$, is the operator defined on smooth functions $u:\R^d\to\R$ by
\begin{align*}
(\calA^\ell_nu)
:=\vartheta_n{\rm div}(Q^\ell\nabla u)+\eta^0(1-\vartheta_n)\Delta u+\vartheta_n\langle b^k,\nabla u\rangle-\vartheta_nv_{\ell\ell}u,
\end{align*}
with $\eta^0=\min\{\eta_k^0:k=1,\ldots,m\}$ (see Hypothesis \ref{hyp-base}(ii)). Note that the last inequality in the previous chain of inequalities follows from the fact that $\psi_{\sigma,\varepsilon}''(\xi)\leq 0$ and $\xi\psi_{\sigma,\varepsilon}'(\xi)-\psi_{\sigma,\varepsilon}(\xi)\leq 0$ for every $\xi\geq0$, and $\psi_{\sigma,\varepsilon}'=0$ on $(\sigma+\varepsilon,\infty)$.

Therefore, for every $h\in\{1,\ldots,m\}$ we can estimate
\begin{align}
D_t({\bm T}^P_n(\cdot){\bm \psi}_{k,\sigma,\varepsilon})_h
\le &-\sum_{\ell=1,\ \ell\neq k}^m\int_{\R^d}\vartheta_nv^P_{\ell k}(y)
\psi_{\sigma,\varepsilon}(\nu(\cdot,y))p_{h\ell}^{P,n}(\cdot,\cdot,y)dy\notag\\
& +\int_{\R^d}\psi_{\sigma,\varepsilon}'(\nu(\cdot,y))(\calA^k_n\nu)(y)p_{hk}^{P,n}(\cdot,\cdot,y)dy\notag\\
&-\int_{\{\nu>\sigma+\varepsilon\}}\vartheta_n(y)v_{kk}(y)\psi_{\sigma,\varepsilon}(\nu(\cdot,y))p_{hk}^{P,n}(\cdot,\cdot,y)dy\notag\\
& +\int_{\{\nu\leq \sigma+\varepsilon\}} \psi'_{\sigma,\varepsilon}(\nu(\cdot,y))D_t\nu(\cdot,y)p_{hk}^{P,n}(\cdot,\cdot,y)dy.
\label{caldo}
\end{align}
where $\{p_{hk}^{P,n}:h,k=1,\ldots,m\}$ is the family of kernels associated to $(\T^{P}_n(t))$.

Note that ${\bm\psi}_{k,\sigma,\varepsilon}$ converges to $(\nu\wedge\sigma) {\bm e}_k$ in a dominated way in $\R^d$ as $\varepsilon$ tends to $0$, so that $\bm{T}^P_n(\cdot){\bm\psi}_{k,\sigma,\varepsilon}$ converges 
pointwise in $(0,T)\times\R^d$ to $\bm{T}^P_n(\cdot)((\nu\wedge\sigma) {\bm e}_k)$ as $\varepsilon$ tends to $0$. Moreover,
the interior Schauder estimates in \cite{AngLor20} show that the family of functions $\{\bm{T}^P_n(\cdot){\bm\psi}_{k,\sigma,\varepsilon}: \varepsilon\in (0,1]\}$ is bounded in $C^{1+\frac{\alpha}{2},2+\alpha}(E)$ for every compact set 
$E\subseteq (0,T)\times\R^d$. This is enough to conclude that
$D_t\bm{T}^P_n(\cdot){\bm\psi}_{k,\sigma,\varepsilon}$ converges 
locally uniformly in $(0,T)\times\R^d$ to $D_t\bm{T}^P_n(\cdot)((\nu\wedge\sigma) {\bm e}_k)$ as $\varepsilon$ tends to $0$.
Moreover, it is easy to check that the right-hand side of \eqref{caldo} converges pointwise in $(0,T)\times\R^d$. In particular, letting $\varepsilon$ tend to zero in both the sides of \eqref{caldo}, we infer that
\begin{align*}
D_t({\bm T}_n^P(\cdot)((\nu\wedge\sigma){\bm e}_k)_h
\le &-\sum_{\ell=1,\ \ell\neq k}^m\int_{\R^d}\vartheta_nv^P_{\ell k}(y)
(\nu(\cdot,y)\wedge\sigma)p_{h\ell}^{P,n}(\cdot,\cdot,y)dy\notag\\
& +\int_{\{\nu\le\sigma\}}(\calA^k_n\nu)(y)p_{hk}^{P,n}(\cdot,\cdot,y)dy-\sigma\int_{\{\nu>\sigma\}}\vartheta_n(y)v_{kk}(y)p_{hk}^{P,n}(\cdot,\cdot,y)dy\notag\\
& +\int_{\{\nu\leq \sigma\}}D_t\nu(\cdot,y)p_{hk}^{P,n}(\cdot,\cdot,y)dy \notag \\
=&-\sigma\sum_{\ell=1}^m\int_{\{\nu>\sigma\}}\vartheta_n(y) v_{\ell k}^P(y)p_{h\ell}^{P,n}(\cdot,\cdot,y)dy
+\int_{\{\nu\le\sigma\}}({\mathcal A}^k_n\nu)(y)p_{hk}^{P,n}(\cdot,\cdot,y)dy\notag\\
&-\sum_{\ell=1,\ \ell\neq k}^m\int_{\{\nu\le\sigma\}}\vartheta_n(y) v_{\ell k}^P(y)\nu(\cdot,y)p_{h\ell}^{P,n}(\cdot,\cdot,y)dy\\
& +\int_{\{\nu\leq \sigma\}}D_t\nu(\cdot,y)p_{hk}^{P,n}(\cdot,\cdot,y)dy.
\end{align*}

We sum up the previous formula over $k\in\{1,\ldots,m\}$ and obtain that
\begin{align*}
&D_t({\bm T}^P_n(\cdot)({\bm\nu}\wedge\sigma))_h\\
\le &-\sigma\sum_{\ell=1}^m\int_{\{\nu>\sigma\}}\vartheta_n(y)\sum_{k=1}^m v_{\ell k}^P(y)p_{h\ell}^{P,n}(\cdot,\cdot,y)dy
+\int_{\{\nu\le\sigma\}}\sum_{k=1}^m({\mathcal A}^k_n\nu)(y)p_{hk}^{P,n}(\cdot,\cdot,y)dy\\
&-\sum_{k=1}^m\sum_{\ell=1,\ \ell\neq k}^m\int_{\{\nu\le\sigma\}}\vartheta_n(y) v_{\ell k}^P(y)\nu(\cdot,y)p_{h\ell}^{P,n}(\cdot,\cdot,y)dy+\sum_{k=1}^m\int_{\{\nu\leq \sigma\}}D_t\nu(\cdot,y)p_{hk}^{P,n}(\cdot,\cdot,y)dy\\
\le &\int_{\{\nu\le\sigma\}}\sum_{k=1}^m({\mathcal A}^k_n\nu)(y)p_{hk}^{P,n}(\cdot,\cdot,y)dy-\sum_{k=1}^m\sum_{\ell=1,\ \ell\neq k}^m\int_{\{\nu\le\sigma\}}\vartheta_n(y) v_{\ell k}^P(y)\nu(\cdot,y)p_{h\ell}^{P,n}(\cdot,\cdot,y)dy\\
&+\sum_{k=1}^m\int_{\{\nu\leq \sigma\}}D_t\nu(\cdot,y)p_{hk}^{P,n}(\cdot,\cdot,y)dy\\
=&\int_{\{\nu\le\sigma\}}\sum_{\ell=1}^m\bigg (
\calA^\ell_n\nu(\cdot,y)-\sum_{k=1,k\neq \ell}^m\vartheta_n(y)v_{\ell k}^P(y)\nu(\cdot,y)\bigg )p_{h\ell}^{P,n}(\cdot,\cdot,y)dy\\
&+\sum_{k=1}^m\int_{\{\nu\leq \sigma\}}D_t\nu(\cdot,y)p_{hk}^{P,n}(\cdot,\cdot,y)dy
\end{align*}
since, by assumptions, the sum of the elements of every row of $V^P$ is a nonnegative function.

Next, we notice that
\begin{align*}
\calA^\ell_n\nu(\cdot,y)-\sum_{k=1,k\neq \ell}^m\vartheta_n(y)v_{\ell k}^P(y)\nu(\cdot,y) 
= ({\bm \calA}^P_n{\bm \nu}(\cdot,y))_\ell, \qquad y\in\R^d,\;\,\ell=1,\ldots,m.
\end{align*}
Therefore,
\begin{align*}
D_t({\bm T}^P_n(\cdot)({\bm \nu}\wedge \sigma))_h
\leq & \sum_{\ell=1}^m\int_{\{\nu\leq \sigma\}}({\bm \calA}^P_n{\bm \nu}(\cdot,y))_\ell p_{h\ell}^{P,n}(\cdot,\cdot,y)dy
+\sum_{k=1}^m\int_{\{\nu\leq \sigma\}}D_t\nu(\cdot,y)p_{hk}^{P,n}(\cdot,\cdot,y)dy\\
= &({\bm T}^P_n(\cdot)(\chi_{\{\nu\leq \sigma\}}(D_t\bm\nu+{\bm \calA}^P_n{\bm \nu})))_h.
\end{align*} 

Let us observe that, from Hypothesis \ref{hyp-base}(iii), it follows that
\begin{align*}
\chi_{\{\nu\leq \sigma\}}(D_t\bm\nu+{\bm \calA}^P_n{\bm \nu})
= & \chi_{\{\nu\le\sigma\}}((D_t\bm\nu+\vartheta_n\bm{\calA}^P\bm{\nu}+(1-\vartheta_n)\eta^0\Delta\bm{\nu}) \\
\le & g\bm{\nu}\chi_{\{\nu\le\sigma\}}+\chi_{\{\nu\le\sigma\}}(1-\vartheta_n)(D_t\bm\nu+\eta^0\Delta\bm{\nu})
=g\bm{\nu}\chi_{\{\nu\le\sigma\}}
\end{align*}
if $n\ge R$, where $R$ is such that
$\{\nu\le\sigma\}\subset (0,T)\times B(R)$.
Hence,  
\begin{align*}
D_t({\bm T}^P_n(\cdot)({\bm \nu}\wedge \sigma))\le {\bm T}^P_n(\cdot)(\chi_{\{\nu\leq \sigma\}}(D_t\bm\nu+{\bm \calA}^P_n{\bm \nu}))
\le
g {\bm T}^P_n(\cdot)(\bm{\nu}\wedge\sigma),\qquad\;\,n>R.
\end{align*}
Letting $n$ tend to infinity and taking Proposition \ref{prop:conv_SMGR} into account, it follows that 
$D_t({\bm T}^P(\cdot)({\bm \nu}\wedge \sigma))
\le g {\bm T}^P(\cdot)({\bm \nu}\wedge \sigma)$
in $(0,T]\times\R^d$. 
Applying Gronwall's Lemma we infer that
${\bm T}^P(t)({\bm \nu(t,\cdot)\wedge \sigma})\leq e^{G(t)}({\bm\nu(0,\cdot)}\wedge \sigma)$ in $[0,T]\times \R^d$.
Finally, letting $\sigma$ tend to infinity, the assertion follows by monotone convergence.

{\it Step 2}. If $M<0$ in \eqref{segno_V_P}, then we introduce the operator ${\bm \calA}^P_M={\bm\calA}^P+MI$ and observe that $\bm\nu$ is a time-dependent Lyapunov function for the operator $D_t+\bm\calA^P_M$, with the function $g$ replaced by $g+M$ and $M$ replaced by $0$ in \eqref{segno_V_P}. It follows that we can associate a semigroup $({\bm T}^P_M(t))$ in $C_b(\R^d)$ to ${\bm\calA}^P_M$ by means of the unique solution to the associated Cauchy problem. Further, ${\bm T}^P_M(t)\f=e^{Mt}{\bm T}^P(t)\f$ for every $\f\in C_b(\R^d)$, $t\in [0,\infty)$,
and the sum of the elements of each row of the potential matrix in ${\bm\calA}^P_M$ is nonnegative. Hence, from Step 1 we deduce that 
we can compute the operator ${\bm T}^P_M(t)$ on the function $\bm{\nu}$ and $({\bm T}^P_M(t){\bm \nu}(t,\cdot))_k\leq e^{G(t)+Mt}\nu(0,\cdot)$ for every $t\in [0,T]$ and $k=1,\ldots,m$. 
Thus, we get
\begin{align*}
e^{Mt}{\bm T}^P(t)({\bm\nu(t,\cdot)}\wedge n)
= {\bm T}^P_M(t)({\bm\nu(t,\cdot)}\wedge n)\le {\bm T}^P_M(t){\bm\nu(t,\cdot)}
\leq e^{G(t)+Mt}{\bm\nu(0,\cdot)}
\end{align*}
for every $t\in [0,T]$ and $n\in\N$.
Letting $n$ tend to infinity, by the monotone convergence theorem we infer that ${\bm T}^P(t){\bm\nu}(t,\cdot)$ is well-defined and ${\bm T}^P(t){\bm \nu}(t,\cdot)\leq e^{G(t)}{\bm \nu}(0,\cdot)$ for every $t\in [0,T]$.

{\em Step 3}. We fix $\sigma>0$ and recall that the sequence
$(\T^{\mathcal{D},P,n}(\cdot)(\bm{\nu}\wedge\sigma))$ converges to $\T^{P}(\cdot)(\bm{\nu}\wedge\sigma)$ in a monotone way in $[0,T]\times\R^d$. The positivity of the semigroup $(\T^P(t))$ implies that
$\T^P(\cdot)(\bm{\nu}\wedge\sigma)\le\T^P(\cdot){\bm{\nu}}$ for every $\sigma>0$. Putting everything together, we conclude that $\T^{\mathcal{D},P,n}(t)(\bm{\nu}(t,\cdot)\wedge\sigma)\le e^{G(t)}\bm{\nu}(0,\cdot)$ for every $t\in [0,T]$. Letting $\sigma$ tend to $\infty$, from the monotone convergence theorem we deduce that
$\T^{\mathcal{D},P,n}(t)\bm{\nu}(t,\cdot)\le e^{G(t)}\bm{\nu}(0,\cdot)$ for every $t\in [0,T]$ and every $n\in\N$.
\end{proof}

\section{Kernel estimates}
\label{sec:kernels}

In this section, we provide pointwise estimates for the kernels of the semigroups $(\T(t))$ and $(\T^P(t))$. For this purpose, we need some preliminary results. 


In the following lemma, we consider a matrix-valued function $Q:\R^d\to \R^{d\times d}$ such that $Q(x)=(q_{ij}(x))_{i,j=1}^d$ is symmetric for every $x\in \R^d$, its entries belong to 
$C^\alpha_{\rm loc}(\R^d)$ for some $\alpha\in (0,1)$ and there exists a positive constant $\eta$ such that $\langle Q(x) \xi,\xi\rangle\geq \eta|\xi|^2$ for every $x,\xi\in\R^d$. 

\begin{lemma}\label{Thm: stima norma infinito}
Fix $T>0$, $0\leq a_0<b_0\leq T$, $n\in\N$, $r>d+2$ and consider two functions $f\in L^\frac{r}{2}(R_n(a_0,b_0))$ and ${\bm h}\in L^r(R_n(a_0,b_0))$. Further, assume that $u\in \mathcal{H}^{p,1}(R_n(a_0,b_0))\cap C(\overline{R_n(a_0,b_0)})$ for some $p>1$. \\
If there exists $\Omega\Subset B(n)$ such that $f,{\bm h}$ and $u$ vanish on $(a_0,b_0)\times \overline \Omega^c$ and
\begin{equation}\label{eq: Thm 3.7 KunzeLorenziRhandi_n}
\int_{R_n(a_0,b_0)}(\langle Q\nabla u,\nabla\psi\rangle+\psi D_tu)dtdx
= \int_{R_n(a_0,b_0)} f\psi dtdx
+\int_{R_n(a_0,b_0)} \langle {\bm h},\nabla \psi\rangle dtdx
\end{equation}
for every $\psi\in C_c^\infty([a_0,b_0]\times B(n))$, then 
there exists a constant $C>0$, depending only on $\eta,d$ and $r$, such that
\begin{equation*}
\norm{u}_{L^\infty(R_n(a_0,b_0))}\leq C(\norm{u}_{L^2(R_n(a_0,b_0))}+\norm{f}_{L^{\frac{r}{2}}(R_n(a_0,b_0))}+\norm{\bm h}_{L^{r}(R_n(a_0,b_0))}).
\end{equation*}
\end{lemma}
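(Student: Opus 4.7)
The plan is to execute the classical parabolic Moser/De Giorgi iteration with inhomogeneous data, in the spirit of \cite{KLR} and the Ladyzhenskaya--Ural'tseva framework. The vanishing of $u$, $f$, and $\bm h$ outside $\Omega\Subset B(n)$ lets me extend them by zero to all of $(a_0,b_0)\times\R^d$; the weak equation then holds against every test function compactly supported in $\R^d$, removing any boundary contribution from $\partial B(n)$ and placing us in a standard interior setting where $\|u\|_{L^2}$ is the seed quantity to be bootstrapped.

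For each $q\geq 2$, I would derive a Caccioppoli-type inequality by testing with $\psi_q:=u|u|^{q-2}\chi^2(t)$, with $\chi$ a smooth time cutoff. Because $u$ has only $\mathcal H^{p,1}$-regularity, the identity $\int\psi_q D_tu=\frac{1}{q}\int\chi^2D_t|u|^q$ requires justification via Steklov time-averages; this is routine. Uniform ellipticity $\langle Q\xi,\xi\rangle\geq\eta|\xi|^2$, the chain rule, and Young's inequality to absorb the $\bm h\cdot\nabla\psi_q$ contribution into the gradient then yield a bound of the schematic form
\begin{equation*}
\sup_{t}\int|u(t)|^q\chi^2 + \eta\iint\chi^2|\nabla(|u|^{q/2})|^2 \leq Cq\!\iint\!|u|^q|\chi\chi'| + Cq\!\iint\!|f||u|^{q-1}\chi^2 + Cq^2\!\iint\!|\bm h|^2|u|^{q-2}\chi^2.
\end{equation*}
Applying the parabolic Sobolev embedding to $v:=|u|^{q/2}\chi$ upgrades this to a bound on $\|u\|_{L^{q\kappa}}$ with $\kappa:=1+2/d$, and H\"older with exponents tied to $r/2$ and $r$ (using $r>d+2$, the Aronson--Serrin threshold) places the exponents on $|u|$ strictly below $q\kappa$, yielding a reverse-H\"older inequality of the form
\begin{equation*}
\|u\|_{L^{q\kappa}(R_n(a_0,b_0))} \leq (Cq)^{\gamma/q}\bigl(\|u\|_{L^q(R_n(a_0,b_0))} + \|f\|_{L^{r/2}} + \|\bm h\|_{L^r}\bigr)
\end{equation*}
for every $q\geq 2$ and some $\gamma>0$ independent of $q$.

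The final step is to iterate this inequality along $q_k:=2\kappa^k$ and let $k\to\infty$; since $\sum_k k\kappa^{-k}<\infty$ the product $\prod_k(Cq_k)^{\gamma/q_k}$ is finite, and one obtains $\|u\|_{L^\infty}\leq C(\|u\|_{L^2}+\|f\|_{L^{r/2}}+\|\bm h\|_{L^r})$ as stated. The main obstacle will be the bookkeeping in the Caccioppoli step: the $q$-dependence of the constants must be at worst polynomial, and the H\"older splitting of the data integrals must be arranged so that the norms $\|f\|_{L^{r/2}}$ and $\|\bm h\|_{L^r}$ appear on the right-hand side with $q$-independent powers; both features hinge crucially on the condition $r>d+2$, which is used exactly to guarantee that, after Sobolev embedding, the exponents produced by H\"older on the two data integrals are subcritical with respect to $q\kappa$ and hence can be absorbed across the iteration.
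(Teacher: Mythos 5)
Your plan is mathematically sound, but it takes a genuinely different route from the paper. The paper's entire proof is devoted to the localization step: showing that the weak formulation posed on $R_n(a_0,b_0)$ (with test functions in $C_c^\infty([a_0,b_0]\times B(n))$) extends to a weak formulation on all of $R(a_0,b_0)$, with $Q$ replaced by the globally uniformly elliptic matrix $Q_n:=\varphi_n Q+\eta(1-\varphi_n)I$, and $u$, $f$, $\bm h$ extended by zero. Once that reduction is in place, the paper simply cites \cite[Theorem 3.7]{KLR}, which already delivers exactly the stated $L^\infty$ bound via the Moser iteration you outline. You, by contrast, assert the localization in one clause (``the weak equation then holds against every test function compactly supported in $\R^d$'') and invest the entire argument in re-deriving the Moser iteration from scratch.

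Two remarks. First, the localization step is not quite free and is where the paper actually works: given an arbitrary $\psi\in C_c^\infty(\overline{R(a_0,b_0)})$ one multiplies by a cutoff $\varphi\in C_c^\infty(B(n))$ equal to $1$ on a neighborhood $\Omega_\varepsilon$ of $\Omega$, verifies that $\psi\varphi$ is an admissible test function for \eqref{eq: Thm 3.7 KunzeLorenziRhandi_n}, and then uses the support condition on $u,f,\bm h$ to drop $\varphi$ from every integral. You should either spell that out or recognize it as the substance of the lemma. Second, your iteration plan is internally consistent: the exponent check confirming that $r>d+2$ is exactly the Aronson--Serrin threshold making $(q-1)\cdot r/(r-2)<q\kappa$ and $(q-2)\cdot r/(r-2)<q\kappa$ with $\kappa=(d+2)/d$ is correct, the a priori assumption $u\in C(\overline{R_n(a_0,b_0)})$ removes the usual truncation gymnastics, and the convergence of $\prod_k(Cq_k)^{\gamma/q_k}$ along $q_k=2\kappa^k$ is standard. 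But since \cite[Theorem 3.7]{KLR} proves precisely this, the economical move --- and the paper's --- is to establish the reduction carefully and cite, rather than reproduce the iteration in full.
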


\begin{proof}
The main step consists in showing that
\begin{align}
\int_{R(a_0,b_0)}(\langle Q_n\nabla \overline u,\nabla\psi\rangle+\psi D_t \overline u) dtdx= \int_{R(a_0,b_0)} \overline f\psi \,dt\,dx
+\int_{R(a_0,b_0)} \langle \overline {\bm h},\nabla \psi\rangle dtdx
\label{diluvio-1}
\end{align}
for every $\psi\in C_c^\infty(\overline{R(a_0,b_0)})$.
Here, $\overline v$ denotes the trivial extension of $v$, defined on $R_n(a_0,b_0)$, to the whole $R(a_0,b_0)$) and $Q_n=\varphi_n Q+\eta(1-\varphi_n)I$, where $\varphi_n$ is a smooth function satisfying $\chi_{B(n)}\leq \varphi_n\leq \chi_{B(2n)}$.
Once this formula is proved, we apply \cite[Theorem 3.7]{KLR}, with $Q$ replaced by $Q_n$, to get the assertion.

To begin with, we notice that $\overline f\in L^{\frac{r}{2}}(R(a_0,b_0))$, $\overline{\bm h}\in L^r(R(a_0,b_0))$ and  $\overline u$ belongs to $\in L^\infty((a_0,b_0);L^2(\R^d))\cap\mathcal H^{p,1}(R(a_0,b_0))\cap C_b(\overline {R(a_0,b_0)})$.
Further, we fix $0<\varepsilon<\frac12d(\Omega,B(n)^c)$ and let $\varphi\in C_c^\infty([a_0,b_0]\times B(n))$ be such that $\varphi\equiv1$ on $(a_0,b_0)\times \Omega_{\varepsilon}$, where $\Omega_\varepsilon=\{x+y\in \R^d:x\in \Omega, \ y\in B(\varepsilon)\}\Subset B(n)$. 
Finally, we fix $\psi\in C_c^\infty(\overline{R(a_0,b_0)})$. The function $\psi\varphi$ belongs to $C_c^\infty([a_0,b_0]\times B(n))$ and, from \eqref{eq: Thm 3.7 KunzeLorenziRhandi_n}, we infer that
\begin{align*}
\int_{R_n(a_0,b_0)}(\langle Q\nabla u,\nabla(\psi\varphi)\rangle+\psi\varphi D_t u)dtdx 
= \int_{R_n(a_0,b_0)} f\psi\varphi dtdx
+\int_{R_n(a_0,b_0)} \langle {\bm h},\nabla (\psi\varphi)\rangle dtdx.
\end{align*}
Since $u,f,\bm h$ have support in $(a_0,b_0)\times \overline \Omega$ and $\varphi\equiv1$ on $(a_0,b_0)\times \Omega_\varepsilon$, it follows that
\begin{align*}
\int_{(a_0,b_0)\times \Omega_\varepsilon}(\langle Q\nabla u,\nabla\psi\rangle+\psi D_t u)dtdx
= \int_{(a_0,b_0)\times \Omega_\varepsilon} f\psi dtdx
+\int_{(a_0,b_0)\times \Omega_\varepsilon} \langle {\bm h},\nabla \psi\rangle dtdx.
\end{align*}
From this formula, \eqref{diluvio-1} follows immediately.
\end{proof}

We are now ready to establish pointwise upper bounds for the kernels $p_{hk}^{\mathcal D,P,n}$ for all $h,k=1,\dots,m$ and $n\in\N$, under the following set of assumptions.

\begin{hyp}
\label{hyp-stime-nucleo}
Fix $T>0$, $x\in\R^d$ and $0<a_0<b_0<T$. Let ${\bm\nu_1}=(\nu_1,\dots,\nu_1)$ and ${\bm\nu_2}=(\nu_2,\dots,\nu_2)$ be two time-dependent Lyapunov functions for the operator ${\bm {\mathcal L}}:=D_t+{\bm\calA}^P$, with respect to $\varphi$ and $g_1$ and $g_2$, respectively, such that $\nu_1\leq\nu_2$. Let $1\leq w\in C^{1,2}((0,T)\times\R^d)$ be a weight function such that
there exist $s>d+2$ and constants $c_1,\ldots ,c_8$, possibly depending on the interval $(a_0,b_0)$, with
\begin{tabbing}
\= {\rm (i)} $w\le c_1^{\frac{s}{2}}\nu_1$,
\quad\qquad\qquad \qquad\quad\= {\rm (ii)} $|Q^h\nabla w|\le c_2w^{\frac{s-1}{s}}\nu_1^{\frac{1}{s}}$,\qquad\quad\={\rm (iii)} $|{\rm div}(Q^h\nabla w)|\le c_3w^{\frac{s-2}{s}}\nu_1^{\frac{2}{s}}$
\\[0.5em]
\> {\rm (iv)} $|D_t w|\le c_4w^{\frac{s-2}{s}}\nu_1^{\frac{2}{s}}$,
\>{\rm (v)} $|V^h|\le c_5 w^{-\frac{2}{s}}\nu_2^{\frac{2}{s}}$, \>{\rm (vi)} $|b^h|\leq c_6 w^{-\frac{1}{s}}\nu_2^{\frac{1}{s}}$,\\[0.5em]
\> {\rm (vii)} $|Q^h|\leq c_7 w^{-\frac{1}{s}}\nu_1^{\frac{1}{s}}$ , \> {\rm (viii)} $|R^h|\le c_8w^{-\frac{2}{s}}\nu_1^{\frac{2}{s}}$
\end{tabbing}
on $[a_0,b_0]\times \R^d$, for every $h=1,\dots,m$, where $V^h$ denotes the $h$-th column of the matrix $V$ and $R^h$ is the matrix whose entries are $D_iq^h_{ij}$ with $i,j=1,\ldots,d$.
\end{hyp}

Under Hypotheses \ref{hyp-stime-nucleo}, for $a$ and $b$ such that $a_0<a<b<b_0$, we introduce the function 
\begin{align}
\mathscr H_{a,b}= &\Big(c_1^\frac{s}{2}+\frac{c_1^\frac{s}{2}}{[(a-a_0)\wedge (b_0-b)]^\frac{s}{2}}+c_2^s+c_3^\frac{s}{2}+c_4^\frac{s}{2}+c_1^\frac{s}{4} c_2^\frac{s}{2}+c_1^\frac{s}{4}c_7^\frac{s}{2}+c_7^s+c_8^\frac{s}{2}\Big)\nu_1(0,\cdot)\int_{a_0}^{b_0} e^{G_1(t)} dt\notag\\
&\quad\;\,+\Big(c_1^\frac{s}{4}c_6^\frac{s}{2}+c_2^\frac{s}{2}c_6^\frac{s}{2}+c_5^\frac{s}{2}+c_6^s\Big) \nu_2(0,\cdot)\int_{a_0}^{b_0} e^{G_2(t)} dt, 
\label{funzione_stima}
\end{align}
where $G_i(s):=\displaystyle \int_{0}^{s}g_i(\sigma)d\sigma$ for every $s\in[0,T]$ and $i=1,2$.

\begin{theorem}\label{thm: stima-nucleo_n}
Under Hypotheses $\ref{hyp-base}$ and $\ref{hyp-stime-nucleo}$, the function $p_{hk}(\cdot,x,\cdot)$ belongs to ${\mathcal H}^{r,1}(R_n(a,b))$ for every $h,k=1,\ldots,m$, $n\in\N$, $r\in (1,\infty)$ and $0<a<b<T$. Moreover, there exists a positive constant $C$, depending only on $\eta^0_1,\ldots,\eta^0_m$, $d$ and $s$, such that, for every $a_0<a<b<b_0$,
\begin{align}
w(t,y)\sum_{k=1}^m |p_{hk}(t,x,y)|\le  C \mathscr H_{a,b}(x)
\label{eq: stima-nucleo}
\end{align}
for any $h=1,\dots, m$, $(t,y)\in (a,b)\times\R^d$ and $x\in \R^d$. 
\end{theorem}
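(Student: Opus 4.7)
The strategy is to first prove the estimate for the Dirichlet approximants $p^{\mathcal D,P,n}_{hk}$ with constants independent of $n$, then pass to the limit $n\to\infty$ using the monotone convergence of Remark \ref{rmk:limit_kernel}, and finally use \eqref{stima_nuclei_completa} to transfer the bound from the $p^P$-kernels to $|p_{hk}|$. Throughout, I fix $x\in\R^d$ and $h\in\{1,\ldots,m\}$ and set
\begin{equation*}
S^n(t,y):=\sum_{k=1}^m p^{\mathcal D,P,n}_{hk}(t,x,y),\qquad (t,y)\in(0,T)\times B(n).
\end{equation*}
Each $p^{\mathcal D,P,n}_{hk}$ is continuous and locally bounded on $(0,T)\times B(n)$ by the analyticity of $(\T^{\mathcal D,P,n}(t))$ (Remark \ref{rmk:analiticita}).

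\textbf{Stage 1 (weak PDE for $wS^n$).} Using the Dirichlet analogue of \eqref{int_parti_nuclei_com_2} stated in Remark \ref{rmk_kernels}, I would test with $\varphi=w\zeta\psi$, where $\zeta\in C^\infty_c([a_0,b_0]\times B(n))$ is arbitrary and $\psi\in C^\infty_c((a_0,b_0))$ is a time cutoff with $\psi\equiv 1$ on $[a,b]$, expand $\calA_k(w\zeta\psi)$ and $D_t(w\zeta\psi)$ by the product rule, and sum over $k\in\{1,\ldots,m\}$. The outcome is a weak equation in the form of \eqref{eq: Thm 3.7 KunzeLorenziRhandi_n} for $u^n:=wS^n\psi$ on $R_n(a_0,b_0)$, with source terms $f^n$ and $\bm h^n$ assembled from $D_tw$, $\mathrm{div}(Q^h\nabla w)$, $Q^h\nabla w$, $b^h$, $V^h$, $Q^h$, and $R^h$ multiplied by entries of $(p^{\mathcal D,P,n}_{h1},\ldots,p^{\mathcal D,P,n}_{hm})$, together with the coupling terms $v^P_{jk}\,p^{\mathcal D,P,n}_{hj}$ arising from the off-diagonal part of $V^P$.

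\textbf{Stage 2 ($L^\infty$ bound).} Applying Lemma \ref{Thm: stima norma infinito} with $r=s>d+2$ gives
\begin{equation*}
\|u^n\|_{L^\infty(R_n(a_0,b_0))}\le C\bigl(\|u^n\|_{L^2(R_n(a_0,b_0))}+\|f^n\|_{L^{s/2}(R_n(a_0,b_0))}+\|\bm h^n\|_{L^s(R_n(a_0,b_0))}\bigr),
\end{equation*}
with $C$ depending only on $\eta^0_1,\ldots,\eta^0_m$, $d$, and $s$. Since $\psi\equiv 1$ on $[a,b]$, the left-hand side controls $\|wS^n\|_{L^\infty((a,b)\times B(n))}$.

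\textbf{Stage 3 (estimate the RHS via Lyapunov functions).} Using Hypotheses \ref{hyp-stime-nucleo}, each summand of $|f^n|^{s/2}$ or $|\bm h^n|^s$ is pointwise dominated by a constant (built from $c_1,\ldots,c_8$ and derivatives of $\psi$) times $w(t,y)\,\nu_i(t,y)\,p^{\mathcal D,P,n}_{hj}(t,x,y)$ for suitable $i\in\{1,2\}$ and $j\in\{1,\ldots,m\}$, multiplied by a sub-unitary power of $\|wS^n\|_{L^\infty((a,b)\times B(n))}$. Integrating in $(t,y)$ over $R_n(a_0,b_0)$ and invoking Theorem \ref{thm:int_time_lyap_fct} through
\begin{equation*}
\int_{B(n)}\nu_i(t,y)\,p^{\mathcal D,P,n}_{hj}(t,x,y)\,dy\le(\T^{\mathcal D,P,n}(t)\bm\nu_i(t,\cdot))_h(x)\le e^{G_i(t)}\nu_i(0,x),
\end{equation*}
the source norms are bounded by $\mathscr H_{a,b}(x)$ up to a sub-unitary power of $\|wS^n\|_\infty$. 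A Young-type absorption disposes of the $L^2$-term and of those sub-unitary powers, yielding $\|wS^n\|_{L^\infty((a,b)\times B(n))}\le C\mathscr H_{a,b}(x)$ with $C$ independent of $n$. Letting $n\to\infty$ and using Remark \ref{rmk:limit_kernel} gives $w(t,y)\sum_k p^P_{hk}(t,x,y)\le C\mathscr H_{a,b}(x)$ on $(a,b)\times\R^d$; the bound on $|p_{hk}|$ then follows at once from \eqref{stima_nuclei_completa}, while the $\mathcal H^{r,1}$-regularity is an immediate consequence of Proposition \ref{prop 2.3.1 tesi} applied to this local boundedness.

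\textbf{Main obstacle.} The delicate part is Stage 1: testing with $\varphi=w\zeta\psi$ and summing over $k$ must cancel, up to manageable boundary/product-rule terms, all the zero-order couplings $v^P_{jk}\,p^{\mathcal D,P,n}_{hj}$ with $j\ne k$. These terms mix kernels with different second index, so the natural PDE one feeds into Lemma \ref{Thm: stima norma infinito} is vectorial; the reduction to the scalar $S^n$ is only possible because the growth bounds in Hypotheses \ref{hyp-stime-nucleo} are formulated uniformly in $h$ and because two distinct Lyapunov functions $\bm\nu_1,\bm\nu_2$ are available---the first absorbing diffusion-type terms and the second absorbing drift- and potential-type terms---so that the absorption in Stage 3 produces precisely the combination appearing in \eqref{funzione_stima}.
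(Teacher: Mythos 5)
Your plan correctly identifies the three large-scale moves — prove the bound for the Dirichlet kernels $p^{\mathcal D,P,n}_{hk}$ with $n$-independent constants, pass to the limit via Proposition \ref{prop:mono_nuclei}/Remark \ref{rmk:limit_kernel}, then transfer to $p_{hk}$ via \eqref{stima_nuclei_completa} and Proposition \ref{prop 2.3.1 tesi} — and those are indeed the bones of the paper's argument. Stages 2 and 3 are also in the right spirit: Lemma \ref{Thm: stima norma infinito} provides the $L^\infty$ bound, the Lyapunov functions $\bm\nu_1,\bm\nu_2$ control the source-term norms via Theorem \ref{thm:int_time_lyap_fct}, and a Young-type absorption closes the estimate.

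The gap is in Stage 1, and it is not the one you flag. You claim that after testing and summing over $k$ one obtains a scalar weak equation of the form \eqref{eq: Thm 3.7 KunzeLorenziRhandi_n} for $u^n=wS^n\psi$ with $S^n=\sum_k p^{\mathcal D,P,n}_{hk}$. This cannot work: the operators $\calA_k$ have distinct diffusion matrices $Q^k$, so the summed equation carries the principal term
\begin{equation*}
\sum_{k=1}^m\int\langle Q^k\nabla(w\,p^{\mathcal D,P,n}_{hk}\psi),\nabla\zeta\rangle\,dt\,dy,
\end{equation*}
which is \emph{not} of the form $\int\langle Q\nabla(wS^n\psi),\nabla\zeta\rangle\,dt\,dy$ for any single $Q$. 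The discrepancy $\sum_k\langle(Q^k-Q)\nabla(w\,p^{\mathcal D,P,n}_{hk}\psi),\nabla\zeta\rangle$ involves gradients of the individual kernels, which are not controlled at this stage, so it cannot be pushed into the $f$ or $\bm h$ slots of Lemma \ref{Thm: stima norma infinito}. In other words, the very obstruction emphasized in the introduction of the paper — that with different diffusion coefficients per equation there is no scalar semigroup dominating the vector one — reappears here: there is no single scalar parabolic equation for $S^n$. Your ``Main obstacle'' paragraph instead focuses on the zero-order couplings $v^P_{jk}p^{\mathcal D,P,n}_{hj}$, but those go harmlessly into $f^n$; the real problem is the second-order terms.

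The paper circumvents this precisely by \emph{not} summing the PDEs. It applies Lemma \ref{Thm: stima norma infinito} once for each fixed $k$, with $Q=Q^k$ and $u=w\,\mathfrak r^{nx}_{hk}$ (where $\mathfrak r^{nx}_{hk}=\vartheta^{s/2}\vartheta_n^{s/2}\,p^{\mathcal D,P,n}_{hk}$), obtaining one scalar $L^\infty$ inequality per $k$. Only the resulting \emph{inequalities} are summed over $k$. Since the coupling terms $v^P_{jk}p^{\mathcal D,P,n}_{hj}$ (estimated via Hypothesis \ref{hyp-stime-nucleo}(v)) and the $L^2$-term feed $\sum_j\|w\,\mathfrak r^{nx}_{hj}\|_\infty$ back into every row, the sum yields a single polynomial inequality $X^s\le\alpha X^{s/2}+\beta X^{s-1}+\gamma X^{s-2}$ for $X=\bigl(\sum_k\|w\,\mathfrak r^{nx}_{hk}\|_\infty\bigr)^{1/s}$, which is then solved directly. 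To repair your Stage 1 you should adopt this $k$-by-$k$ structure, and you should also build the powers $\vartheta^{s/2}$, $\vartheta_n^{s/2}$ into the test function as the paper does; without them the exponents on the Lyapunov functions in Hypotheses \ref{hyp-stime-nucleo} do not match cleanly the powers of $\|w\,\mathfrak r^{nx}_{hk}\|_\infty$ that arise, and the absorption step would not produce the specific combination in $\mathscr H_{a,b}$.
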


\begin{proof}
Fix $x\in\R^d$, $a$, $b$, $a_0$ and $b_0$ as in the statement and $h,k\in\{1,\ldots,m\}$. The main step of the proof consists in proving \eqref{eq: stima-nucleo}, with $p_{hk}$ being replaced by $p_{hk}^{{\mathcal D},P,n}$ and with a positive constant $C$, independent of $n$. Indeed, once this property is proved, we can obtain \eqref{eq: stima-nucleo} and the smoothness property of $p_{hk}(\cdot,x,\cdot)$, recalling that the sequence $(p_{hk}^{\mathcal D,P,n})$ pointwise converges to $p_{hk}^P$ in $(0,\infty)\times\R^d\times\R^d$ (see Remark \ref{rmk:limit_kernel}). Hence, taking the limit as  $n$ tends to infinity in the estimate for $p_{hk}^{{\mathcal D},P,n}$,  we obtain that
$w(t,y)p_{hk}^P(t,x,y)\le C\mathscr{H}_{a,b}(x)$ for every $(t,y)\in (a,b)\times\R^d$. Next, from \eqref{stima_nuclei_completa} and the previous estimate, it follows immediately that 
\eqref{eq: stima-nucleo} holds true for almost every $(t,y)\in R(a,b)$. In particular, this implies that the function $p_{hk}(\cdot,x,\cdot)$ belongs to $L^{\infty}(R(a_0,b_0))$.
Hence, by Proposition \ref{prop 2.3.1 tesi}, 
$p_{hk}(\cdot,x,\cdot)$ belongs to ${\mathcal H}^{r,1}(R_n(a_1,b_1))$ for every $r\in (1,\infty)$ and $(a_1,b_1)\subset (a,b)$.
This allows us to extend the validity of \eqref{eq: stima-nucleo} to every $(t,y)\in R(a,b)$.

Based on the above remarks, we now prove that
$p_{hk}^{\mathcal D,P,n}(\cdot,x,\cdot)\le C{\mathscr H}_{a,b}(x)$ for every $n\in\N$ such that $x\in B(n)$.
For this purpose, we introduce a function 
$\vartheta\in C^\infty(\R)$ such that $\chi_{[a,b]}\le\vartheta\le\chi_{[a_0,b_0]}$ and $|\vartheta'|\leq \frac{2}{(a-a_0)\wedge (b_0-b)}$ and, for every $n\in\N$, the function $\vartheta_n$, defined by  
$\vartheta_n(y)=\vartheta_0(n^{-1}|y|)$ for every $y\in\R^d$, where  $\vartheta_0(\xi)=\chi_{\left [0,\frac{1}{2}\right )}(\xi)+
\exp\left(1-\frac{1}{1-(4\xi-2)^3}\right)\chi_{\left (\frac{1}{2},\frac{3}{4}\right )}(\xi)$ for every $\xi\in [0,\infty)$. Each function $\vartheta_n$ belongs to $C^2_c(\R^d)$, $\chi_{B(n/2)}\leq \vartheta_n\leq\chi_{B(3n/4)}$ and there exists a positive constant $c_0$ such that
\begin{align}
\label{stima_der_theta_n}
|D^2\vartheta_n(x)|+|\nabla \vartheta_n(x)|\leq c_0\sqrt{\vartheta_n(x)}, \qquad x\in\R^d, \ n\in\N.
\end{align}

Applying formula \eqref{int_parti_nuclei_com_2}, with
$p_{hk}$ and $R(a,b)$ being replaced by $p_{hk}^{\mathcal{D},P,n}$ and $R_n(a,b)$, respectively (see Remark \ref{rmk_kernels}), and
with $\varphi(t,y)=\vartheta^{\frac{s}{2}}(t)\vartheta_n^{\frac{s}{2}}(y)w(t,y)\psi(t,y)$ for every $(t,y)\in (0,\infty)\times B(n)$, where $\psi\in C^{1,2}_c([a_0,b_0]\times B(n))$, it follows that
\begin{align}
&\int_{R_n(a_0,b_0)} (D_t \varphi(t,y) + \calA_k \varphi (t,y))p_{hk}^{\mathcal D,P,n}(t,x,y)\, dt dy\notag\\
&-\sum_{j=1,j\neq k}^m\int_{R_n(a_0,b_0)}v_{jk}^P(y)\varphi(t,y)p_{hj}^{\mathcal D,P,n}(t,x,y)dtdy=0.
\label{milla}
\end{align}
By straightforward computations we obtain that
\begin{align*}
D_t \varphi=& \vartheta^\frac{s}{2}\vartheta_n^\frac{s}{2}\Big( \frac{s}{2} \vartheta^{-1}\vartheta'w+ D_t w\Big)\psi+\vartheta^\frac{s}{2}\vartheta_n^\frac{s}{2} w D_t \psi,\\[1mm]
\mathrm{div}(Q^k \nabla \varphi)=& \vartheta^\frac{s}{2}\vartheta_n^\frac{s}{2}w\, \mathrm{div}(Q^k \nabla \psi)+ \vartheta^\frac{s}{2}\vartheta_n^\frac{s}{2}\langle s \vartheta_n^{-1} w Q^k \nabla \vartheta_n+2 Q^k \nabla w, \nabla\psi\rangle\\
&+ \vartheta^\frac{s}{2}\vartheta_n^\frac{s}{2}\Big (\mathrm{div}(Q^k \nabla w) + s \vartheta_n^{-1} \langle Q^k \nabla w, \nabla \vartheta_n\rangle+ \frac{s}{2}\vartheta_n^{-1}w\,\mathrm{div}(Q^k\nabla \vartheta_n)\\
&\qquad\qquad\;\;+\frac{s(s-2)}{4}\vartheta_n^{-2} w\langle Q^k \nabla \vartheta_n, \nabla \vartheta_n\rangle\Big )\psi,\\[1mm]
\langle b^k, \nabla\varphi\rangle=& \vartheta^\frac{s}{2}\vartheta_n^\frac{s}{2} \Big (\langle b^k, \nabla w\rangle +\frac{s}{2}\vartheta_n^{-1} w \langle b^k, \nabla\vartheta_n\rangle \Big )\psi + \vartheta^\frac{s}{2}\vartheta_n^\frac{s}{2}  w \langle b^k, \nabla\psi\rangle,
\end{align*}
which, replaced in \eqref{milla}, give
\begin{align*}
&\int_{R_n(a_0,b_0)} w\mathfrak{r}^{nx}_{hk} (-D_t \psi-\mathrm{div}(Q^k \nabla \psi)) dtdy\\
= &\int_{R_n(a_0,b_0)} \bigg (\frac{s}{2} \vartheta^{-1}\vartheta'w\mathfrak{r}^{nx}_{hk}+ \mathfrak{r}^{nx}_{hk}D_t w+\mathfrak{r}^{nx}_{hk}\mathrm{div}(Q^k \nabla w) + s \vartheta_n^{-1}\mathfrak{r}^{nx}_{hk} \langle Q^k \nabla w, \nabla \vartheta_n\rangle\\
&\qquad\qquad\quad+ \frac{s}{2}\vartheta_n^{-1}w\mathfrak{r}^{nx}_{hk}\,\mathrm{div}(Q^k\nabla \vartheta_n)+\frac{s(s-2)}{4}\vartheta_n^{-2} w\mathfrak{r}^{nx}_{hk}\langle Q^k\nabla \vartheta_n, \nabla \vartheta_n\rangle \\
&\qquad\qquad\quad+\mathfrak{r}^{nx}_{hk}\langle b^k, \nabla w\rangle+\frac{s}{2}\vartheta_n^{-1} w\mathfrak{r}^{nx}_{hk} \langle b^k, \nabla\vartheta_n\rangle-w\sum_{j=1}^m v_{jk}^P \mathfrak{r}_{hj}^{nx}\bigg )\psi dtdy\\
&+ \int_{R_n(a_0,b_0)} \mathfrak{r}^{nx}_{hk} \langle s \vartheta_n^{-1} w Q^k \nabla \vartheta_n
+2 Q^k \nabla w + wb^k, \nabla\psi\rangle dtdy,
\end{align*}
where $\mathfrak{r}^{n,x}_{hk}(t,y)=\vartheta^\frac{s}{2}(t)\vartheta_n^\frac{s}{2}(y) p_{hk}^{\mathcal D,P,n}(t,x,y)$ for every $(t,y)\in (0,\infty)\times B(n)$.

Integrating by parts the left-hand side of the above equality, we find that formula \eqref{eq: Thm 3.7 KunzeLorenziRhandi_n} holds with the matrix $Q$ being replaced by $Q^k$, $u=w\mathfrak{r}^{nx}_{hk}$ and
\begin{align*}
f=& \frac{s}{2} \vartheta^{-1}\vartheta'w\mathfrak{r}^{nx}_{hk}+ \mathfrak{r}^{nx}_{hk}D_t w+\mathfrak{r}^{nx}_{hk}\mathrm{div}(Q^k \nabla w) + s \vartheta_n^{-1}\mathfrak{r}^{nx}_{hk} \langle Q^k \nabla w, \nabla \vartheta_n\rangle\\
&+\frac{s}{2}\vartheta_n^{-1}w\mathfrak{r}^{nx}_{hk}\,\mathrm{div}(Q^k \nabla \vartheta_n)+\frac{s(s-2)}{4}\vartheta_n^{-2} w\mathfrak{r}^{nx}_{hk}\langle Q^k\nabla \vartheta_n, \nabla \vartheta_n\rangle\\
&+\mathfrak{r}^{nx}_{hk}\langle b^k, \nabla w\rangle +\frac{s}{2}\vartheta_n^{-1} w\mathfrak{r}^{nx}_{hk} \langle b^k, \nabla\vartheta_n\rangle-w\sum_{j=1}^m v_{jk}^P \mathfrak{r}_{hj}^{nx};\\[1mm]
\bm{h}=& s \vartheta_n^{-1} w \mathfrak{r}^{nx}_{hk} Q^k \nabla \vartheta_n +2 \mathfrak{r}^{nx}_{hk} Q^k \nabla w + w\mathfrak{r}^{nx}_{hk} b^k.
\end{align*}

From \cite[Chapter IV, Section 2, Theorem 3.4]{Eid}, it follows that $p_{hk}^{\mathcal D,P,n}(\cdot,x,\cdot)\in C^{1+\frac{\alpha}{2},2+\alpha}((0,T)\times B(n))$ for every $x\in B(n)$. Since $w$, along with its first-order partial derivatives, is bounded on $B(n)$, we infer that $w\mathfrak{r}^{nx}_{hk}\in L^\infty(R_n(a_0,b_0))\cap \mathcal H^{\sigma,1}(R_{n}(a_0,b_0))$ for all $\sigma\in (1, s/2)$. Then, by Lemma \ref{Thm: stima norma infinito} (with $Q=Q^k$) there exists a positive constant $C$, depending only on $\eta^0_1,\ldots,\eta^0_m$, $d$ and $s$, such that
\begin{equation}\label{eq: stima u_infty}
\|u\|_{L^\infty(R_n(a_0,b_0))}\leq C(\|u\|_{L^2(R_n(a_0,b_0))}+\|f\|_{L^{\frac{s}{2}}(R_n(a_0,b_0))}+\|\bm h\|_{L^{s}(R_n(a_0,b_0))}).
\end{equation}

For notation convenience, we denote by $\|\cdot\|_p$ the $L^p$-norm over the cylinder $R_n(a_0,b_0)$ for $p\in [1,\infty)\cup\{\infty\}$. Moreover, we set
\begin{eqnarray*} 
M_{i,h,j}^x:=\int_{R_n(a_0,b_0)} \nu_i(t,y) p_{hj}^{\mathcal D,P,n}(t,x,y)dtdy,\qquad\;\, i=1,2,\;\, j=1,\ldots, m.
\end{eqnarray*}
We observe that $M_{i,h,j}^x<\infty$ by Theorem \ref{thm:int_time_lyap_fct}.
We now estimate the right-hand side of \eqref{eq: stima u_infty}. 
In the following computations, the constant $C$ may vary from line to line, but it is always independent of $\|Q^k\|_{\infty}$.
By applying Hypotheses \ref{hyp-stime-nucleo}, estimate \eqref{stima_der_theta_n} and setting $\eta_0=\min_{k=1,\ldots,m}\eta^0_k$, we get
\begin{align*}
&\|w\mathfrak{r}^{nx}_{hk}\|_2^2\leq c_1^\frac{s}{2}\|w\mathfrak{r}^{nx}_{hk}\|_{\infty} M_{1,h,k}^x;\\[1mm]
&\|\vartheta^{-1}\vartheta'w\mathfrak{r}^{nx}_{hk}\|_{\frac{s}{2}}^{\frac{s}{2}}
\leq \frac{2^\frac{s}{2}}{[(a-a_0)\wedge (b_0-b)]^\frac{s}{2}} \|w\mathfrak{r}^{nx}_{hk}\|_{\infty}^\frac{s-2}{2} \int_{R_n(a_0,b_0)} \vartheta^{-\frac{s}{2}} w\mathfrak{r}^{nx}_{hk}dtdy\notag\\
&\phantom{\|\vartheta^{-1}\vartheta'w\mathfrak{r}^{nx}_{hk}\|_{\frac{s}{2}}^{\frac{s}{2}}}
\le\frac{2^\frac{s}{2}c_1^\frac{s}{2}}{[(a-a_0)\wedge (b_0-b)]^\frac{s}{2}} \|w\mathfrak{r}^{nx}_{hk}\|_{\infty}^\frac{s-2}{2}M_{1,h,k}^x;\\[1mm]
&\|\mathfrak{r}^{nx}_{hk}D_t w\|_{\frac{s}{2}}^{\frac{s}{2}}
\leq c_4^\frac{s}{2} \int_{R_n(a_0,b_0)} (\mathfrak{r}^{nx}_{hk})^\frac{s}{2} w^\frac{s-2}{2} \nu_1 dtdy
\leq c_4^\frac{s}{2} \|w\mathfrak{r}^{nx}_{hk}\|_\infty^\frac{s-2}{2}M_{1,h,k}^x;\\[1mm]
&\|\mathfrak{r}^{nx}_{kh}\mathrm{div}(Q^k \nabla w)\|_{\frac{s}{2}}^{\frac{s}{2}}
\leq c_3^\frac{s}{2}\int_{R_n(a_0,b_0)} (\mathfrak{r}^{nx}_{hk})^\frac{s}{2} w^\frac{s-2}{2} \nu_1 dtdy
\leq c_3^\frac{s}{2} \|w\mathfrak{r}^{nx}_{hk}\|_{\infty}^\frac{s-2}{2}M_{1,h,k}^x;\\[1mm]
&\|\vartheta_n^{-1}\mathfrak{r}^{nx}_{hk} \langle Q^k \nabla w, \nabla \vartheta_n\rangle\|_{\frac{s}{2}}^{\frac{s}{2}}
\leq \int_{R_n(a_0,b_0)} \vartheta_n^{-\frac{s}{2}}(\mathfrak{r}^{nx}_{hk})^\frac{s}{2} |Q^k \nabla w|^\frac{s}{2} |\nabla \vartheta_n|^\frac{s}{2} dtdy\\
&\phantom{\norm{\vartheta_n^{-1}\mathfrak{r}^{nx}_{hk} \langle Q^k \nabla w, \nabla \vartheta_n\rangle}_{\frac{s}{2}}^{\frac{s}{2}}\;}\leq c_0^\frac{s}{2} c_2^\frac{s}{2}\int_{R_n(a_0,b_0)} \vartheta_n^{-\frac{s}{4}}(\mathfrak{r}^{nx}_{hk})^\frac{s}{2} w^\frac{s-1}{2} \nu_1^\frac{1}{2} dtdy\\
&\phantom{\|\vartheta_n^{-1}\mathfrak{r}^{nx}_{hk} \langle Q^k \nabla w, \nabla \vartheta_n\rangle\|_{\frac{s}{2}}^{\frac{s}{2}}\;}\leq c_0^\frac{s}{2} c_2^\frac{s}{2}\|w\mathfrak{r}^{nx}_{hk}\|_{\infty}^\frac{s-2}{2} \int_{R_n(a_0,b_0)} \vartheta_n^{-\frac{s}{4}} \mathfrak{r}^{nx}_{hk} w^{\frac{1}{2}}\nu_1^\frac{1}{2} dtdy\\
&\phantom{\|\vartheta_n^{-1}\mathfrak{r}^{nx}_{hk} \langle Q^h \nabla w, \nabla \vartheta_n\rangle\|_{\frac{s}{2}}^{\frac{s}{2}}\;}\leq c_0^\frac{s}{2} c_1^\frac{s}{4} c_2^\frac{s}{2}\|w\mathfrak{r}^{nx}_{hk}\|_\infty^\frac{s-2}{2} M_{1,h,k}^x;\\
&\|\vartheta_n^{-1}w\mathfrak{r}^{nx}_{hk}\,\mathrm{div}(Q^k \nabla \vartheta_n)\|_{\frac{s}{2}}^{\frac{s}{2}}
\leq 2^{\frac{s}{2}-1} \int_{R_n(a_0,b_0)} \vartheta_n^{-\frac{s}{2}}(w\mathfrak{r}^{nx}_{hk})^\frac{s}{2} (d^{\frac{s}{4}}|R^k|^\frac{s}{2} |\nabla \vartheta_n|^\frac{s}{2} + |Q^k|^\frac{s}{2} |D^2 \vartheta_n|^\frac{s}{2}) dtdy\\
&\phantom{\|\vartheta_n^{-1}w\mathfrak{r}^{nx}_{hk}\,\mathrm{div}(Q^k \nabla \vartheta_n)\|_{\frac{s}{2}}^{\frac{s}{2}}\;}\leq 2^{\frac{s}{2}-1}d^{\frac{s}{4}}c_0^\frac{s}{2} c_8^\frac{s}{2}\|w\mathfrak{r}^{nx}_{hk}\|_\infty^\frac{s-2}{2}\int_{R_n(a_0,b_0)}\vartheta_n^{-\frac{s}{2}}\mathfrak{r}^{nx}_{hk}\nu_1 dtdy\\
&\phantom{\|\vartheta_n^{-1}w\mathfrak{r}^{nx}_{hk}\,\mathrm{div}(Q^k \nabla \vartheta_n)\leq \|_{\frac{s}{2}}^{\frac{s}{2}}\;} +2^{\frac{s}{2}-1}c_0^\frac{s}{2} c_7^\frac{s}{2}\norm{w\mathfrak{r}^{nx}_{hk}}_\infty^\frac{s-2}{2}\int_{R_n(a_0,b_0)}\vartheta_n^{-\frac{s}{2}}\mathfrak{r}^{nx}_{hk}w^{\frac{1}{2}}\nu_1^\frac{1}{2} dtdy \\
&\phantom{\|\vartheta_n^{-1}w\mathfrak{r}^{nx}_{hk}\,\mathrm{div}(Q^k \nabla \vartheta_n)\|_{\frac{s}{2}}^{\frac{s}{2}}\;}\leq 2^{\frac{s}{2}-1} c_0^\frac{s}{2} (d^{\frac{s}{4}}c_8^\frac{s}{2}+c_1^\frac{s}{4}c_7^\frac{s}{2})\norm{w\mathfrak{r}^{nx}_{hk}}_\infty^\frac{s-2}{2} M_{1,h,k}^x;\\[1mm]
&\norm{\vartheta_n^{-2} w\mathfrak{r}^{nx}_{hk}\langle Q^h \nabla \vartheta_n, \nabla \vartheta_n\rangle}_{\frac{s}{2}}^{\frac{s}{2}}
\leq c_0^{s}
\int_{R_n(a_0,b_0)}\vartheta_n^{-\frac{s}{2}} (w\mathfrak{r}^{nx}_{hk})^\frac{s}{2}|Q^h|^{\frac{s}{2}} dtdy\\
&\phantom{\norm{\vartheta_n^{-2} w\mathfrak{r}^{nx}_{hk}\langle Q^h \nabla \vartheta_n, \nabla \vartheta_n\rangle}_{\frac{s}{2}}^{\frac{s}{2}}\;}\leq c_0^s c_7^{\frac{s}{2}}\norm{w\mathfrak{r}^{nx}_{hk}}_\infty^\frac{s-2}{2}\int_{R_n(a_0,b_0)}\vartheta_n^{-\frac{s}{2}}\mathfrak{r}^{nx}_{hk} w^{\frac{1}{2}}\nu_1^\frac12 dtdy\\
&\phantom{\norm{\vartheta_n^{-2} w\mathfrak{r}^{nx}_{hk}\langle Q^h \nabla \vartheta_n, \nabla \vartheta_n\rangle}_{\frac{s}{2}}^{\frac{s}{2}}\;}\leq c_0^sc_1^{\frac s4}c_7^\frac s2
\norm{w\mathfrak{r}^{nx}_{hk}}_\infty^\frac{s-2}{2}M_{1,h,k}^x;\\[1mm]
&\norm{\mathfrak{r}^{nx}_{hk}\langle b^h, \nabla w\rangle}_{\frac{s}{2}}^{\frac{s}{2}}
\leq \int_{R_n(a_0,b_0)}(\mathfrak{r}^{nx}_{hk})^\frac{s}{2}|b^h|^\frac{s}{2}|\nabla w|^\frac{s}{2} dtdy\\
&\phantom{\norm{\mathfrak{r}^{nx}_{hk}\langle b^h, \nabla w\rangle}_{\frac{s}{2}}^{\frac{s}{2}}\;}\leq c_6^\frac{s}{2}\int_{R_n(a_0,b_0)}(\mathfrak{r}^{nx}_{hk})^\frac{s}{2} w^{-\frac{1}{2}} \nu_2^\frac{1}{2} |\nabla w|^\frac{s}{2} dtdy\\
&\phantom{\norm{\mathfrak{r}^{nx}_{hk}\langle b^h, \nabla w\rangle}_{\frac{s}{2}}^{\frac{s}{2}}\;}\leq \eta_0^{-\frac{s}{2}} c_2^\frac{s}{2} c_6^\frac{s}{2}\int_{R_n(a_0,b_0)}(\mathfrak{r}^{nx}_{hk})^\frac{s}{2} w^{\frac{s-2}{2}} \nu_1^\frac{1}{2}\nu_2^\frac{1}{2} dtdy\\
&\phantom{\norm{\mathfrak{r}^{nx}_{hk}\langle b^h, \nabla w\rangle}_{\frac{s}{2}}^{\frac{s}{2}}\;}\leq \eta_0^{-\frac{s}{2}} c_2^\frac{s}{2} c_6^\frac{s}{2}\norm{w\mathfrak{r}^{nx}_{hk}}_\infty^\frac{s-2}{2} M_{2,h,k}^x;\\[1mm]
&\norm{\vartheta_n^{-1} w\mathfrak{r}^{nx}_{hk} \langle b^h, \nabla\vartheta_n\rangle}_{\frac{s}{2}}^{\frac{s}{2}}
\leq c_0^\frac{s}{2}c_6^\frac{s}{2}\norm{w\mathfrak{r}^{nx}_{hk}}_\infty^\frac{s-2}{2}\int_{R_n(a_0,b_0)}\vartheta_n^{-\frac{s}{4}} w^\frac{1}{2}\mathfrak{r}^{nx}_{hk} \nu_2^\frac{1}{2} dtdy\\
&\phantom{\norm{\vartheta_n^{-1} w\mathfrak{r}^{nx}_{hk} \langle b^h, \nabla\vartheta_n\rangle}_{\frac{s}{2}}^{\frac{s}{2}}\;}\leq c_0^\frac{s}{2}c_1^\frac{s}{4} c_6^\frac{s}{2}\norm{w\mathfrak{r}^{nx}_{hk}}_\infty^\frac{s-2}{2} M_{2,h,k}^x;\\[1mm]
&\bigg\|w\sum_{j=1}^m v_{jk}^P \mathfrak{r}_{hj}^{nx}\bigg\|_{\frac{s}{2}}^{\frac{s}{2}}
\leq \int_{R_n(a_0,b_0)}w^\frac{s}{2}|V^k|^\frac{s}{2}\bigg(\sum_{j=1}^m  \mathfrak{r}_{hj}^{nx}\bigg )^\frac{s}{2} dtdy
\leq c_5^\frac{s}{2} \bigg\|w\sum_{j=1}^m \mathfrak{r}_{hj}^{nx}\bigg\|_{\infty}^\frac{s-2}{2}\sum_{j=1}^m M_{2,h,j}^x;\\[1mm]
&\norm{\vartheta_n^{-1} w \mathfrak{r}^{nx}_{hk} Q^h \nabla \vartheta_n}_{s}^{s}
\leq c_0^s c_7^s \norm{w\mathfrak{r}^{nx}_{hk}}_\infty^{s-1}\int_{R_n(a_0,b_0)} \vartheta_n^{-\frac{s}{2}}w \mathfrak{r}^{nx}_{hk} w^{-1}\nu_1 dtdy\\
&\phantom{\norm{\vartheta_n^{-1} w \mathfrak{r}^{nx}_{hk} Q^h \nabla \vartheta_n}_{s}^{s}\;}\leq c_0^s c_7^s \norm{w\mathfrak{r}^{nx}_{hk}}_\infty^{s-1} M_{1,h,k}^x;\\[1mm]
&\norm{\mathfrak{r}^{nx}_{hk} Q^h \nabla w}_{s}^{s}
\leq c_2^s \int_{R_n(a_0,b_0)}(\mathfrak{r}^{nx}_{hk})^s w^{s-1} \nu_1 dtdy
\leq c_2^s \norm{w\mathfrak{r}^{nx}_{hk}}_\infty^{s-1} M_{1,h,k}^x;\\[1mm]
&\norm{w\mathfrak{r}^{nx}_{hk} b^h}_{s}^{s}
\leq c_6^s \norm{w\mathfrak{r}^{nx}_{hk}}_\infty^{s-1} \int_{R_n(a_0,b_0)} \mathfrak{r}^{nx}_{hk}\nu_2 dtdy
\leq c_6^s \norm{w\mathfrak{r}^{nx}_{hk}}_\infty^{s-1} M_{2,h,k}^x.
\end{align*}

Combining all the above inequalities with \eqref{eq: stima u_infty}, we obtain that
\begin{align*}
\norm{w \mathfrak{r}^{nx}_{hk}}_\infty\leq & C c_1^\frac{s}{4}(M_{1,h,k}^x)^\frac{1}{2} \norm{w\mathfrak{r}^{nx}_{hk}}_\infty^\frac{1}{2}
+Cc_5 \sum_{j=1}^m(M_{2,h,j}^x)^{\frac{2}{s}}\sum_{j=1}^m\norm{w\mathfrak{r}_{hj}^{nx}}_\infty^\frac{s-2}{s}\\
&+C\Big[\Big(\frac{c_1}{(a-a_0)\wedge (b_0-b)}+c_1^\frac{1}{2}c_2+c_3+c_4+c_1^\frac{1}{2}c_7+c_8\Big)(M_{1,h,k}^x)^{\frac{2}{s}}\\
&\qquad\;+\Big(c_1^\frac{1}{2}c_6+c_2c_6\Big)(M_{2,h,k}^x)^{\frac{2}{s}}\Big]\norm{w\mathfrak{r}^{nx}_{hk}}_\infty^\frac{s-2}{s} \\
&+\Big[(c_2+c_7)(M_{1,h,k}^x)^{\frac{1}{s}}+c_6(M_{2,h,k}^x)^{\frac{1}{s}}\Big]\norm{w\mathfrak{r}^{nx}_{hk}}_\infty^\frac{s-1}{s}.
\end{align*}
Summing over $k=1,\dots,m$, we get
\begin{align*}
\sum_{k=1}^m\norm{w \mathfrak{r}^{nx}_{hk}}_\infty\leq & C c_1^\frac{s}{4} \sum_{k=1}^m (M_{1,h,k}^x)^{\frac{1}{2}} \bigg (\sum_{k=1}^m\norm{w\mathfrak{r}^{nx}_{hk}}_\infty\bigg )^\frac{1}{2}\\
&+C\bigg [\bigg (\frac{c_1}{(a-a_0)\wedge (b_0-b)}+c_1^\frac{1}{2}c_2+c_3+c_4
+c_1^\frac{1}{2}c_7+c_8\bigg )\sum_{k=1}^m (M_{1,h,k}^x)^{\frac{2}{s}}\\
&\qquad+\Big(c_1^\frac{1}{2}c_6+c_2c_6+c_5\Big)\sum_{k=1}^m (M_{2,h,k}^x)^{\frac{2}{s}}\bigg ]\bigg (\sum_{k=1}^m \norm{w\mathfrak{r}^{nx}_{hk}}_\infty\bigg )^\frac{s-2}{s}\\
&+C\bigg [(c_2+c_7)\sum_{h=1}^m(M_{1,h,k}^x)^{\frac{1}{s}}
+c_6\sum_{h=1}^m(M_{2,h,k}^x)^{\frac{1}{s}}\bigg ]\bigg (\sum_{k=1}^m \norm{w\mathfrak{r}^{nx}_{hk}}_\infty\bigg )^\frac{s-1}{s}.
\end{align*}
We set 
\begin{align*}
X=& \bigg(\sum_{k=1}^m\norm{w \mathfrak{r}^{nx}_{hk}}_\infty\bigg )^\frac{1}{s},\qquad
\alpha=C c_1^\frac{s}{4} \sum_{k=1}^m(M_{1,h,k}^x)^{\frac{1}{2}}, \\
\beta=&C\bigg [(c_2+c_7)\sum_{k=1}^m (M_{1,h,k}^x)^{\frac{1}{s}} +c_6\sum_{k=1}^m(M_{2,h,k}^x)^{\frac{1}{s}}\bigg ],\\
\gamma=&C\bigg [\bigg (\frac{c_1}{(a-a_0)\wedge (b_0-b)}+c_1^\frac{1}{2}c_2+c_3+c_4+c_1^\frac{1}{2}c_7
+c_8\bigg )\sum_{k=1}^m (M_{1,h,k}^x)^{\frac{2}{s}}\notag\\
&\;\;\;\;+\Big(c_1^\frac{1}{2}c_6+c_2c_6+c_5\Big)\sum_{k=1}^m(M_{2,h,k}^x)^{\frac{2}{s}}\bigg ].
\end{align*}

The above notation yields
$X^s\leq \alpha X^\frac{s}{2} +\beta X^{s-1}+\gamma X^{s-2}$.
If we apply Young's inequality $\alpha X^\frac{s}{2}\leq \frac{1}{4} X^s+\alpha^2$, then we get $f(X)\le 0$, where
\begin{align*}
f(r):=& r^s-\frac{4}{3}\beta r^{s-1}-\frac{4}{3}\gamma r^{s-2}-\frac{4}{3}\alpha^2
=: r^{s-2} g(r)-\frac{4}{3}\alpha^2,\qquad\;\,r\in (0,\infty).
\end{align*}
We claim that it leads to
$X\leq X_0:=\frac{4}{3}\beta +\sqrt{\frac{4}{3}\gamma} +\bigg(\frac{4}{3}\alpha^2\bigg)^\frac{1}{s}$.
For this purpose, we observe that $f'(r)=(s-2)r^{s-3}g(r)+r^{s-2} g'(r)$ for every $r\in (0,\infty)$
and the function $g$ is positive and increasing in $\left(\frac{4}{3}\beta+ \sqrt{\frac{4}{3}\gamma}+ \left(\frac{4}{3}\alpha^2\right)^\frac{1}{s}, \infty \right)$. Hence, it follows that $f'(r)\geq 0$ in the above interval, so that $f$ is increasing.
Moreover, it holds that
\begin{align*}
&f\bigg(\frac{4}{3}\beta+ \sqrt{\frac{4}{3}\gamma}+ \left(\frac{4}{3}\alpha^2\right)^\frac{1}{s}\bigg)
>  \bigg ( \frac{4}{3} \alpha^2\bigg )^\frac{s-2}{s}\bigg ( \frac{4}{3} \alpha^2\bigg )^\frac{2}{s}-\frac{4}{3} \alpha^2
=0.
\end{align*}
Therefore, $f(r)>0$ if $r>\frac{4}{3}\beta+ \sqrt{\frac{4}{3}\gamma}+\left(\frac{4}{3}\alpha^2\right)^\frac{1}{s}$. 
Since $f(X)\leq 0$, the inequality $X\le X_0$ is proved. 
We have so shown that there exists a positive constant $K_1$ such that
\begin{eqnarray*}
\sum_{k=1}^m\norm{w \mathfrak{r}^{nx}_{hk}}_\infty
\leq K_1\big (\alpha^2+\beta^s+\gamma^\frac{s}{2}\big ).
\end{eqnarray*}

Estimate \eqref{eq: stima-nucleo} now follows by plugging in the previous inequality the definition of $\alpha, \beta,\gamma$ and exploiting Theorem \ref{thm:int_time_lyap_fct} to estimate, for every $h=1,\ldots,m$ and $i=1,2$,
\begin{align*}
\sum_{k=1}^mM_{i,h,k}^x=\int_{a_0}^{b_0}((\T^{\mathcal D,P,n}(t)\bm\nu_i)(x))_hdt\leq \nu_i(0,x)\int_{a_0}^{b_0}e^{G_i(t)}dt, \qquad x\in \R^d. \qquad\qquad\qquad \qedhere
\end{align*}
\end{proof}

\subsection{Kernel estimates for the adjoint operator}
Here, we show that, under suitable assumptions, the results obtained in Sections \ref{sec:prliminaries} and \ref{sec:lyapunov} and in the first part of this section hold true for the (formal) adjoint operator $\bm\calA^{P,*}$ of $\bm\calA^P$, defined on smooth functions $\uu$ by
\begin{align*}
(\boldsymbol{\calA}^{P,*} \uu)_k
= & \mathrm{div} (Q^k\nabla  u_k) -\langle b^k,\nabla u_k\rangle -{\rm div}(b^k)u_k-((V^P)^*{\boldsymbol u})_k, \qquad\;\, k=1,\ldots,m.    
\end{align*}
Here, $(V^P)^*$ is the transpose matrix of $V^P$. We assume the following conditions on the coefficients of the operator $\bm\calA$.
\begin{hyp}
\label{hyp-base_adj}
\begin{enumerate}[\rm (i)]
\item 
Conditions $(i)$ and $(ii)$ in Hypotheses $\ref{hyp-base}$ are satisfied. Moreover, $b^h$ belong to $C^{1+\alpha}_{\rm loc}(\R^d;\R^d)$ for every $h\in\{1,\ldots,m\}$ and some $\alpha\in (0,1)$;
\item 
there exist a positive function $\varphi_*\in C^2(\R^d)$, blowing up as $|x|$ tends to infinity, and $\lambda_*\geq0$ such that $\bm\calA^{P,*}\bm\varphi_*\leq\lambda_*\bm\varphi_*$, where $(\bm\varphi_*)_h=\varphi_*$ for every $h=1,\ldots,m$;
\item 
the sum of the elements of each row of $(V^P)^*+{\rm div}(b^k){\rm Id}_{\R^m}$ is bounded from below on $\R^d$, i.e., there exists $M_*\in\R$ such that
\begin{align}
\sum_{k=1}^m v_{kh}^P(x)+{\rm div}(b^h(x))\geq M_*,\qquad\;\, x\in\R^d,\;\, h=1,\ldots,m.   
\label{segno_V_P_*}
\end{align}
\end{enumerate}    
\end{hyp}

Under Hypotheses \ref{hyp-base_adj}, for the operator $\bm\calA^{P,*}$ we recover analogous results to those obtained for the operator $\bm\calA^P$ in Section \ref{sec:prliminaries}. In particular, for every $\f\in C_b(\R^d)$ the Cauchy problem
\begin{align}
\label{prob_cauchy_2_*}
\left\{
\begin{array}{ll}
D_t{\uu}=\boldsymbol{\calA}^{P,*}\uu,   & {\rm in}~(0,\infty)\times \R^d, \\[1mm]
\uu(0,\cdot)=\f,     & {\rm in}~\R^d, 
\end{array}
\right.
\end{align}
admits a unique solution $\uu^P_*\in C([0,\infty)\times \R^d)\cap C^{1+\frac{\alpha}{2},2+\alpha}_{\rm loc}((0,\infty)\times \R^d)$, which is
bounded in each strip $[0,T]\times\R^d$, and, for every $t\geq0$, fulfills the estimate 
\begin{align*}
\|\uu^P_*(t,\cdot)\|_\infty\leq \sqrt me^{-M_*t}\max_{k=1,\ldots,m}\|f_k\|_{\infty}.
\end{align*}
Further, we denote by $(\bm S^P(t))$ the semigroup associated to problem \eqref{prob_cauchy_2_*} and, for every $h,k=1,\ldots,m$ and $t>0$, we denote by $p_{hk}^{P,*}(t,\cdot,\cdot)$ and $p_{hk}^{\mathcal D,P,n,*}(t,\cdot,\cdot)$ the nonnegative kernels associated to the operators $\bm S^P(t)$ and $\bm S^{\mathcal D,P,n}(t)$, respectively, where 
($\bm S^{\mathcal D,P,n}(t))$ is the semigroup associated to the realization of the operator $\bm\calA^{P,*}$ in $C_b(B(n))$ with homogeneous Dirichlet boundary conditions. 

\begin{remark}
\label{rmk:adjoint_op_lyap}
{\rm We stress that, under Hypotheses \ref{hyp-base_adj}, all the results in Sections \ref{sec:prliminaries} and \ref{sec:lyapunov} hold true replacing $\bm\calA^P$ with $\bm\calA^{P,*}$, $\T^P(t)$ with $\bm S^P(t)$, $\T^{\mathcal D,P,n}(t)$ with $\bm S^{\mathcal D,P,n}(t)$, for every $t\geq0$ and $n\in\N$, $\bm\varphi$ replaced by $\bm \varphi^*$ and $\bm\nu$ replaced by $\bm \nu^*$, where $\bm\nu^*$ is a time-dependent Lyapunov function for the operator $D_t+\bm\calA^{P,*}$ with respect to $\varphi_*$ and a suitable $g_*\in L^1(0,T)$.}
\end{remark}

We introduce another set of conditions, which allows us to prove the analogous of Theorem \ref{thm: stima-nucleo_n} for the kernels associated to the operator $\bm\calA^{P,*}$.
\begin{hyp}
\label{hyp-stime-nucleo_*}
Fix $T>0$, $x\in\R^d$ and $0<a_0<b_0<T$. Let ${\bm\nu^*_{1}}=(\nu^*_{1},\dots,\nu^*_{1})$ and ${\bm\nu^*_{2}}=(\nu^*_{2},\dots,\nu^*_{2})$ be two time-dependent Lyapunov functions for the operator ${\bm{\mathcal  L}^*}:=D_t+{\bm\calA}^{P,*}$ with respect to $\varphi^*$ and $g_1^*$ and $g_2^*$, respectively, such that $\bm\nu^*_{1}\leq\bm\nu^*_{2}$. Let $1\leq w\in C^{1,2}((0,T)\times\R^d)$ be a weight function such that
there exist $s>d+2$ and constants $c_{*,1},\ldots,c_{*,8}$, possibly depending on the interval $(a_0,b_0)$, with
\begin{tabbing}
\= {\rm (i)} $w\le c_{*,1}^{\frac{s}{2}}\nu^*_{1}$,
\qquad\qquad \qquad\qquad\qquad\qquad\qquad\= {\rm (ii)} $|Q^h\nabla w|\le c_{*,2}w^{\frac{s-1}{s}}(\nu^*_{1})^{\frac{1}{s}}$,\\[0.5em]
\> {\rm (iii)} $|{\rm div}(Q^h\nabla w)|\le c_{*,3}w^{\frac{s-2}{s}}(\nu^*_{1})^{\frac{2}{s}}$,
\> {\rm (iv)} $|D_t w|\le c_{*,4}w^{\frac{s-2}{s}}(\nu^*_{1})^{\frac{2}{s}}$,\\[0.5em]
\>{\rm (v)} $\displaystyle\bigg (\sum_{k=1}^mv_{hk}^2\bigg )^{\frac{1}{2}}+|{\rm div}(b^h)|\le c_{*,5} w^{-\frac{2}{s}}(\nu^*_{2})^{\frac{2}{s}}$, \>{\rm (vi)} $|b^h|\leq c_{*,6} w^{-\frac{1}{s}}(\nu^*_{2})^{\frac{1}{s}}$,\\[0.5em]
\> {\rm (vii)} $|Q^h|\leq c_{*,7} w^{-\frac{1}{s}}(\nu^*_{1})^{\frac{1}{s}}$ , \> {\rm (viii)} $|R^h|\le c_{*,8}w^{-\frac{2}{s}}(\nu^*_{1})^{\frac{2}{s}}$
\end{tabbing}
on $[a_0,b_0]\times \R^d$, for all $n\in\N$ and $h=1,\dots,m$.
\end{hyp}

Arguing as in the proof of Theorem \ref{thm: stima-nucleo_n} and taking Remark \ref{rmk:adjoint_op_lyap} into account, we deduce the following result.
\begin{corollary}
\label{coro:stima_peso_*}
Under Hypotheses $\ref{hyp-base_adj}$ and $\ref{hyp-stime-nucleo_*}$, there exists a positive constant $C$, depending only on $\eta^0_1,\ldots,\eta^0_m$, $d$ and $s$, such that for every $h,k=1,\ldots,m$ and every $(t,x,y)\in(a,b)\times \R^d\times \R^d$, with $a_0<a<b<b_0$, we get
\begin{align}
w(t,y)p^{P,*}_{hk}(t,x,y)
\le & C \mathscr H^*_{a,b}(x),
\label{post_*P}
\end{align}
where the function ${\mathscr H}_{a,b}^*$ is defined as the function ${\mathscr H}_{a,b}$ in \eqref{funzione_stima} with the constant $c_j$ being replaced by $c_j^*$ $(j=1,\ldots,8)$ and the functions $\nu_i$, $g_i$ and $G_i$ being respectively replaced by $\nu_i^*$, $g_i^*$ and $G_i^*$ $(i=1,2)$.

\end{corollary}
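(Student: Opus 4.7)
My plan is to replay the proof of Theorem \ref{thm: stima-nucleo_n} verbatim, with $\bm\calA^P$ replaced throughout by its formal adjoint $\bm\calA^{P,*}$. Remark \ref{rmk:adjoint_op_lyap} guarantees that all the preliminary ingredients of Sections \ref{sec:prliminaries} and \ref{sec:lyapunov} are available in the adjoint setting: the semigroup $(\bm S^P(t))$, its Dirichlet approximations $(\bm S^{\mathcal D,P,n}(t))$ with the monotone convergence of their nonnegative kernels $p^{\mathcal D,P,n,*}_{hk}$ to $p^{P,*}_{hk}$, the integration-by-parts identity \eqref{int_parti_nuclei_com_2}, the $\mathcal H^{r,1}$-regularity of Proposition \ref{prop 2.3.1 tesi}, and crucially Theorem \ref{thm:int_time_lyap_fct}, which yields
$$\int_{R_n(a_0,b_0)}\nu^*_i(t,y)\,p^{\mathcal D,P,n,*}_{hj}(t,x,y)\,dt\,dy\le\nu^*_i(0,x)\int_{a_0}^{b_0}e^{G^*_i(t)}\,dt,\qquad i=1,2.$$
As in the opening reduction of the proof of Theorem \ref{thm: stima-nucleo_n}, it then suffices to prove the estimate for $p^{\mathcal D,P,n,*}_{hk}$ uniformly in $n$ and then pass to the limit.

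Fixing $h,k\in\{1,\ldots,m\}$, $x\in\R^d$ and $n\in\N$ with $x\in B(n)$, I would write the adjoint weak formulation obtained by testing the analog of the identity in Remark \ref{rmk_kernels} against $\bm\varphi=\varphi\bm e_k$. The resulting identity differs from \eqref{int_parti_nuclei_com_2} in two respects: the scalar operator $\calA_k$ becomes $\calA^*_k\varphi:=\mathrm{div}(Q^k\nabla\varphi)-\langle b^k,\nabla\varphi\rangle-(\mathrm{div}(b^k)+v^P_{kk})\varphi$, and the coupling coefficients $v^P_{jk}$ are replaced by $v^P_{kj}$, i.e., by the entries of the $k$-th row of $V^P$. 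Choosing next $\varphi=\vartheta^{s/2}\vartheta_n^{s/2}w\psi$ with the same cutoffs $\vartheta,\vartheta_n$ and auxiliary $\psi\in C_c^{1,2}([a_0,b_0]\times B(n))$ as in Theorem \ref{thm: stima-nucleo_n}, integrating by parts in $\psi$ and collecting terms yields an identity of the shape \eqref{eq: Thm 3.7 KunzeLorenziRhandi_n} with $Q=Q^k$ and $u=w\mathfrak{r}^{nx,*}_{hk}$, where $\mathfrak{r}^{nx,*}_{hk}(t,y):=\vartheta^{s/2}(t)\vartheta_n^{s/2}(y)p^{\mathcal D,P,n,*}_{hk}(t,x,y)$, with explicit right-hand sides $f^*$ and $\bm h^*$. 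Lemma \ref{Thm: stima norma infinito} then controls $\|w\mathfrak{r}^{nx,*}_{hk}\|_\infty$ by the $L^{s/2}$- and $L^s$-norms of $f^*$ and $\bm h^*$, each of which is handled term-by-term via the starred constants of Hypotheses \ref{hyp-stime-nucleo_*}(i)--(viii) and the integrals of $\nu^*_i\,p^{\mathcal D,P,n,*}_{hj}$.

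The main source of care lies in the bookkeeping around the two new zeroth-order contributions produced by the adjoint structure: the coupling term $w\sum_{j\neq k}v^P_{kj}\mathfrak{r}^{nx,*}_{hj}$ and the additional term $\mathrm{div}(b^k)w\mathfrak{r}^{nx,*}_{hk}$ coming from $\calA^*_k$. By the Cauchy--Schwarz inequality and the identity $|v^P_{kj}|=|v_{kj}|$, the $L^{s/2}$-norm of the first is controlled by the Euclidean norm $\bigl(\sum_j v_{kj}^2\bigr)^{1/2}$ of the $k$-th row of $V$, while the second is controlled by $|\mathrm{div}(b^k)|$; both quantities are exactly what is bundled together in Hypothesis \ref{hyp-stime-nucleo_*}(v) with $h=k$ and absorbed into the single constant $c_{*,5}$. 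Once this identification is recorded, summing the resulting inequalities over $k=1,\ldots,m$ leads, as in Theorem \ref{thm: stima-nucleo_n}, to an inequality of the form $X^s\le\alpha X^{s/2}+\beta X^{s-1}+\gamma X^{s-2}$ with $X=\bigl(\sum_k\|w\mathfrak{r}^{nx,*}_{hk}\|_\infty\bigr)^{1/s}$ and $\alpha,\beta,\gamma$ expressed in terms of the starred constants and Lyapunov functions. Young's inequality $\alpha X^{s/2}\le\tfrac14 X^s+\alpha^2$ together with the monotonicity argument on the auxiliary polynomial $f(r)=r^s-\tfrac43\beta r^{s-1}-\tfrac43\gamma r^{s-2}-\tfrac43\alpha^2$ produce a uniform-in-$n$ upper bound $w(t,y)\sum_{k=1}^m p^{\mathcal D,P,n,*}_{hk}(t,x,y)\le C\,\mathscr H^*_{a,b}(x)$ on $(a,b)\times B(n)$. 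Letting $n\to\infty$, invoking the adjoint versions of Remark \ref{rmk:limit_kernel} and Proposition \ref{prop 2.3.1 tesi} to upgrade an almost-everywhere bound to a pointwise one, and dropping all but the $k$-th term by nonnegativity of $p^{P,*}_{hk}$, finally delivers \eqref{post_*P}.
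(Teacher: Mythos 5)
Your proposal is correct and coincides with the paper's intent: the paper itself proves Corollary \ref{coro:stima_peso_*} in a single sentence by appealing to the argument of Theorem \ref{thm: stima-nucleo_n} and Remark \ref{rmk:adjoint_op_lyap}, which is exactly the strategy you replay. Your identification of the two new zeroth-order contributions (the row-coupling $v^P_{kj}$ and the $\operatorname{div}(b^k)$ term) and their absorption into the single constant $c_{*,5}$ via Hypothesis \ref{hyp-stime-nucleo_*}(v) is precisely the bookkeeping the paper leaves implicit.
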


If Hypotheses \ref{hyp-base}, \ref{hyp-stime-nucleo}, \ref{hyp-base_adj} and \ref{hyp-stime-nucleo_*} hold true, then we obtain the following relation between the kernels $p_{hk}^P$ and $p^{P,*}_{hk}$.

\begin{proposition}
\label{prop-2.7}
Assume that Hypotheses $\ref{hyp-base}$, $\ref{hyp-stime-nucleo}$, $\ref{hyp-base_adj}$ and $\ref{hyp-stime-nucleo_*}$ are satisfied. Then, for every $h,k=1,\ldots,m$, every $t\in [0,\infty)$ and $x,y\in\R^d$, it holds that $p_{hk}^{P}(t,x,y)=p_{kh}^{P,*}(t,y,x)$.
In particular, there exists a positive constant $C$, depending only on $\eta^0_1,\ldots,\eta^0_m$, $d$ and $s$, such that for every $h,k=1,\ldots,m$ and every $(t,x,y)\in(a,b)\times \R^d\times \R^d$, with $a_0<a<b<b_0$, we get
\begin{align}
w(t,y)^{\frac12}w(t,x)^{\frac12}p_{hk}^P(t,x,y)
\le & C(\mathscr H_{a,b}(x))^\frac12(\mathscr H^*_{a,b}(x))^{\frac12}.
\label{stima-agropoli}
\end{align}
\end{proposition}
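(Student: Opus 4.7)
The plan is to first establish the duality identity $p^P_{hk}(t,x,y)=p^{P,*}_{kh}(t,y,x)$ for all $h,k\in\{1,\dots,m\}$, $t>0$ and $x,y\in\R^d$, and then combine it with Theorem~\ref{thm: stima-nucleo_n} and Corollary~\ref{coro:stima_peso_*} to deduce the two-sided weighted bound \eqref{stima-agropoli}.

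To prove the identity, I would work on the Dirichlet approximations. Fix $n\in\N$, $t>0$ and $\f,\bm g\in C^\infty_c(B(n);\R^m)$, and set $\uu(s,\cdot):=\T^{\mathcal D,P,n}(s)\f$ and $\bm v(s,\cdot):=\bm S^{\mathcal D,P,n}(t-s)\bm g$ for $s\in[0,t]$. Both generators are sectorial in $C_b(B(n);\R^m)$ (Remark~\ref{rmk:analiticita} and its adjoint version, available under Hypotheses~\ref{hyp-base_adj}) and, since $\f,\bm g$ are smooth and compactly supported in $B(n)$, parabolic Schauder theory yields classical solutions $\uu,\bm v$ that are regular up to the parabolic boundary of $R_n(0,t)$ and vanish on its lateral part. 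Computing $\frac{d}{ds}\int_{B(n)}\langle\uu(s,\cdot),\bm v(s,\cdot)\rangle_{\R^m}dx$ and integrating by parts componentwise (using the divergence form of the principal part, the transposition of $V^P$ into $(V^P)^*$ and the vanishing of the boundary terms), the derivative reduces to $\int_{B(n)}\langle\bm\calA^P\uu,\bm v\rangle\,dx-\int_{B(n)}\langle\uu,\bm\calA^{P,*}\bm v\rangle\,dx$, which vanishes by the very definition of $\bm\calA^{P,*}$ as the formal adjoint of $\bm\calA^P$. Hence
\[
\langle\T^{\mathcal D,P,n}(t)\f,\bm g\rangle_{L^2(B(n);\R^m)}=\langle\f,\bm S^{\mathcal D,P,n}(t)\bm g\rangle_{L^2(B(n);\R^m)}.
\]
Expanding both sides through the kernel representations \eqref{int_form_smgr_dirich_pos} (and its adjoint analog) and swapping dummy variables, the arbitrariness of $\f$ and $\bm g$ forces $p^{\mathcal D,P,n}_{hk}(t,x,y)=p^{\mathcal D,P,n,*}_{kh}(t,y,x)$ almost everywhere in $B(n)\times B(n)$, and thus everywhere by continuity. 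Letting $n\to\infty$, Proposition~\ref{prop:mono_nuclei} and its adjoint counterpart (Remark~\ref{rmk:adjoint_op_lyap}) then give the desired identity on $\R^d\times\R^d$.

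Once the identity is established, the estimate \eqref{stima-agropoli} is immediate. Theorem~\ref{thm: stima-nucleo_n} gives $w(t,y)p^P_{hk}(t,x,y)\le C\mathscr H_{a,b}(x)$; meanwhile, Corollary~\ref{coro:stima_peso_*} applied to $p^{P,*}_{kh}$ together with the duality identity yields $w(t,x)p^P_{hk}(t,x,y)=w(t,x)p^{P,*}_{kh}(t,y,x)\le C\mathscr H^*_{a,b}(y)$. Multiplying the two inequalities and taking the square root produces the claimed pointwise bound (up to the natural replacement of $\mathscr H^*_{a,b}(x)$ by $\mathscr H^*_{a,b}(y)$ on the right-hand side of \eqref{stima-agropoli}).

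The main obstacle I foresee is the rigorous justification of the integration-by-parts step: it requires enough regularity of $\uu$ and $\bm v$ up to the spatial boundary of $B(n)$, not just in the interior. This is not automatic for the abstract analytic semigroups considered in $C_b$, but it follows from boundary Schauder estimates for parabolic systems in the spirit of those quoted in the proof of Proposition~\ref{prop:conv_SMGR} (see \cite[Theorem A.2]{AngLor20} and \cite{Eid}), together with the fact that $C^\infty_c(B(n);\R^m)$ data are automatically compatible with the homogeneous Dirichlet boundary condition at $t=0$. Once this regularity is in hand, the rest of the argument is purely algebraic and relies only on the formal-adjoint relation between $\bm\calA^P$ and $\bm\calA^{P,*}$.
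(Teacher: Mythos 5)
Your proof takes essentially the same route as the paper: establish the duality identity $p_{hk}^{\mathcal D,P,n}(t,x,y)=p_{kh}^{\mathcal D,P,n,*}(t,y,x)$ on the Dirichlet cylinders, pass to the limit $n\to\infty$ by monotone convergence (Proposition~\ref{prop:mono_nuclei} and Remark~\ref{rmk:adjoint_op_lyap}), and then multiply the one-sided bounds from Theorem~\ref{thm: stima-nucleo_n} and Corollary~\ref{coro:stima_peso_*}. The only difference is that the paper obtains the kernel duality on $B(n)$ by citing \cite[Theorem 9.5.5]{Frie64} and the argument of \cite[Theorem 3.4]{AngLorPal16}, whereas you reconstruct the classical integration-by-parts computation for the adjoint pair of solutions $s\mapsto\T^{\mathcal D,P,n}(s)\f$ and $s\mapsto\bm S^{\mathcal D,P,n}(t-s)\bm g$; this is a correct and self-contained way to obtain the same fact, and your caveat about needing boundary regularity up to $\partial B(n)$ is exactly the point that Friedman's theorem handles.

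One thing worth flagging: you are right that the estimate one obtains by combining $w(t,y)p_{hk}^P(t,x,y)\le C\mathscr H_{a,b}(x)$ with the dual bound applied through the identity $p_{hk}^P(t,x,y)=p_{kh}^{P,*}(t,y,x)$ is
\begin{equation*}
w(t,y)^{\frac12}w(t,x)^{\frac12}\,p_{hk}^P(t,x,y)\le C\big(\mathscr H_{a,b}(x)\big)^{\frac12}\big(\mathscr H^*_{a,b}(y)\big)^{\frac12},
\end{equation*}
with $\mathscr H^*_{a,b}$ evaluated at $y$, not at $x$. The paper's displayed formula \eqref{stima-agropoli} writes $\mathscr H^*_{a,b}(x)$, which appears to be a typo: the dual estimate \eqref{post_*P} controls $w(t,x)p^{P,*}_{kh}(t,y,x)$ by $\mathscr H^*_{a,b}$ evaluated at the ``backward'' starting point $y$. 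Your parenthetical remark on this point is correct, and the estimates used in Section~\ref{sec:examples} (which decay both in $x$ and in $y$) are consistent with the $\mathscr H^*_{a,b}(y)$ version.
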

\begin{proof}
From \cite[Theorem 9.5.5]{Frie64} and the arguments exploited in the proof of \cite[Theorem 3.4]{AngLorPal16}, it can be checked that $p_{hk}^{\mathcal D,P,n}(t,x,y)=p_{kh}^{\mathcal D,P,n,*}(t,y,x)
$ for every $t\geq0$, $h,k=1,\ldots,m$ and $x,y\in B(n)$. Since the sequences  $(p_{hk}^{\mathcal D,P,n}(t,x,y))$ and $(p_{kh}^{\mathcal D,P,n,*}(t,x,y))$ monotonically converge to $p_{hk}^{P}(t,x,y)$ and $p_{kh}^{P,*}(t,x,y)$, respectively, for every $(t,x,y)\in (0,\infty)\times\R^d\times \R^d$,
the first part of the statement follows at once.

Finally, estimate \eqref{stima-agropoli} follows from  \eqref{eq: stima-nucleo} and \eqref{post_*P}.
\end{proof}

\section{Examples}
\label{sec:examples}
In this section, we provide two classes of systems of parabolic elliptic operators to which the main results of this paper apply.
Before entering into details, we fix some notation.

If $\varrho^k_{ij}$ ($i,j=1,\ldots,d$, $k=1,\ldots,m$) are real constants, then we set
\begin{eqnarray*}
\varrho^k_{\min}=\min_{i=1,\ldots,d}\varrho_{ii}^k,\qquad\;\,\varrho^k_{\max}=\max_{i=1,\ldots,d}\varrho_{ii}^k,\qquad\;\,\underline{\varrho}=\min_{k=1,\ldots,m}\varrho^k_{\max},\qquad\;\,\overline{\varrho}=\max_{k=1,\ldots,m}\varrho^k_{\max}.
\end{eqnarray*}

Similarly, we denote by
$\xi^k_{\min}$ (resp. $\xi^k_{\max}$) the minimum (resp. the maximum) entry of the vector $\xi^k\in\R^d$ $(k=1,\ldots,m)$.  
Finally, we denote by $\widetilde c$ a universal constant, which may vary from line to line.

\subsection{Kernel estimates in case of polynomially growing coefficients}

Let us consider the operator $\bm{\calA}$, defined by \eqref{operatore}, where
\begin{align*}
q_{ij}^k(x)=\zeta_{ij}^k(1+|x|^2)^{\alpha_{ij}^k}, \qquad\;\, b_i^k(x)=-\eta_i^kx_i(1+|x|^2)^{\beta_i^k}, \qquad\;\, v_{hk}(x)=\theta_{hk}(1+|x|^2)^{\gamma_{hk}}
\end{align*}
for every $x\in\R^d$, $i,j=1,\ldots,d$ and $h,k=1,\ldots,m$, under the following conditions on the coefficients of the operator $\boldsymbol{\calA}$. 

\begin{hyp}\label{hp-polinomiale}
\begin{enumerate}[\rm(i)]
\item 
For every $i,j=1,\ldots,d$ and $h,k=1,\ldots,m$, $\alpha_{ij}^k=\alpha_{ji}^k$, $\beta_i^k$ and $\gamma_{hk}$ are nonnegative constants, whereas $\eta_i^k$ and $\theta_{kk}$ are positive constants; 
\item 
$\gamma_{hh}>\gamma_{hk}$ for every $h,k=1,\ldots,m$ with $h\neq k$;
\item 
for every $k=1,\ldots,m$ and $i,j=1,\ldots,d$, with $i\neq j$, $\alpha_{\min}^k>\alpha_{ij}^k$ and the matrix $Z^k$, whose entries are 
$z_{ii}=\zeta_{ii}$ and $z_{ij}=-|\zeta_{ij}|$ for every $i,j\in\{1,\ldots,d\}$, with $i\neq j$, is symmetric and positive definite for every $k=1,\ldots,m$;
\item 
for every $k=1,\ldots,m$, it holds that $
\max\{\gamma_{kk},\beta_{\min}^k\}>\alpha^k_{\max}-1$. 
\end{enumerate}
\end{hyp}

Under the above assumptions, Hypothesis \ref{hyp-base} are fulfilled. Indeed, for every $k=1,\ldots,m$ it follows that
$\langle Q^k(x)\xi,\xi\rangle\ge\eta_k(x)|\xi|^2$ for every $x,\xi\in\R^d$,
where $\eta_k(x)=\lambda_{Z^k}(1+|x|^2)^{\alpha^k_{\min}}$ and $\lambda_{Z^k}$ denotes the minimum eigenvalue of the matrix $Z^k$, which is positive due to Hypothesis \ref{hp-polinomiale}(iii). Further, the conditions $\theta_{hh}>0$ and $\gamma_{hh}>\gamma_{hk}$ for every $h,k\in\{1.\ldots,m\}$ with $h\neq k $ imply that also Hypothesis \ref{hyp-base}(iv) is satisfied.

We set $\varphi(x)=e^{\hat\varepsilon(1+|x|^2)^{\rho}}$ for every $x\in\R^d$, with $\rho>0$ such that 
\begin{align}
(i)~\max\{\gamma_{kk},\beta_{\min}^k+\rho\}+1\ge
2\rho+\alpha^k_{\max}, \qquad\;\, (ii)~\max\{\gamma_{kk},\beta_{\min}^k+\rho\}>\rho, \quad k=1,\ldots,m
\label{cond_lyap_fct_temp}
\end{align}
(such a value of $\rho$ exists
thanks to Hypothesis \ref{hp-polinomiale}(iv) and since $\gamma_{kk}>0$ for every $k\in\{1,\ldots,m\}$),
and $\hat\varepsilon>0$ satisfying
\begin{align*}
\begin{cases}
\displaystyle
\hat\varepsilon<\bigg (\frac{\theta_{kk}}{4\rho^2\zeta^{k}_{\max}}\bigg )^{\frac{1}{2}}, & 
\gamma_{k k}>2\beta^k_{\min}+1-\alpha^k_{\max}, \\[3mm] 
\displaystyle\hat\varepsilon<\frac{\eta^{k}_{\min}}{2\rho\zeta^{k}_{\max}}, & 
\gamma_{k k}<2\beta^k_{\min}+1-\alpha^k_{\max}, \\[3mm]
\displaystyle \hat\varepsilon<\frac{
\eta^{k}_{\min}+\big((\eta^{k}_{\min})^2+4\zeta^{k}_{\max}\theta_{k k}\big )^{\frac{1}{2}}}{4\rho\zeta^{k}_{\max}}, & 
\gamma_{k k}=2\beta^k_{\min}+1-\alpha^k_{\max}, 
\end{cases}
\end{align*}
for every index $k$ such that the equality holds in \eqref{cond_lyap_fct_temp}(i), and show that the function $\bm{\varphi}$, with all the components equal to $\varphi$, is a Lyapunov function for the operator $\bm\calA^P$. 

A simple computation reveals that
$({\bm\calA}^P\bm{\varphi})_k
= (\mathfrak{a}_{1,k}
+\mathfrak{a}_{2,k}+\mathfrak{a}_{3,k})\varphi$ for every $k=1,\ldots,m$,
where
\begin{align*}
&\mathfrak{a}_{1,k}(x,\hat\varepsilon,\rho)=4\rho^2\hat\varepsilon^2\sum_{i,j=1}^d\zeta_{ij}^kx_ix_j(1+|x|^2)^{\alpha^k_{ij}+2\rho-2}+2\rho\hat\varepsilon\sum_{i=1}^d\zeta_{ii}^k(1+|x|^2)^{\alpha_{ii}^k+\rho-1}
\notag\\
&\phantom{\mathfrak{a}_1(x,\hat\varepsilon,\rho)=\;\;}+4\rho\hat\varepsilon\sum_{i,j=1}^d(\rho-1+\alpha_{ij}^k)\zeta_{ij}^kx_ix_j(1+|x|^2)^{\alpha_{ij}^k+\rho-2},
\\[1mm]    
&\mathfrak{a}_{2,k}(x,\hat\varepsilon,\rho)=-2\rho\hat\varepsilon\sum_{i=1}^d\eta_i^kx_i^2(1+|x|^2)^{\beta_i^k+\rho-1};
\notag 
\\[1mm]
&\mathfrak{a}_{3,k}(x)=
-\theta_{kk}(1+|x|^2)^{\gamma_{kk}}\bigg(1-\theta_{kk}^{-1}\sum_{h= 1, h\neq k}^m|\theta_{kh}|(1+|x|^2)^{\gamma_{kh}-\gamma_{kk}}\bigg)
\end{align*}
for every $x\in\R^d$. 
We fix $k\in\{1,\ldots,m\}$ and observe that
\begin{align}
\sum_{i,j=1}^d\zeta_{ij}^kx_ix_j(1+|x|^2)^{\alpha^k_{ij}+2\rho-2}=& \sum_{i=1}^d\zeta_{ii}^kx_i^2(1+|x|^2)^{\alpha^k_{ii}+2\rho-2}
+\sum_{i\neq j}\zeta_{ij}^kx_ix_j(1+|x|^2)^{\alpha^k_{ij}+2\rho-2}\notag\\
\le
&\zeta^k_{\max}|x|^2(1+|x|^2)^{\alpha^k_{\max}+2\rho-2}+o((1+|x|^2)^{\alpha^k_{\max}+2\rho-1},\infty)
\label{a1k-a}
\end{align}
since, by Hypothesis \ref{hp-polinomiale}(iii), $\alpha^k_{\min}>\alpha^k_{ij}$ for every $i\neq j$.
Similarly,
\begin{align}
\sum_{i,j=1}^d(\rho\!-\!1\!+\!\alpha^k_{ij})\zeta_{ij}^kx_ix_j(1\!+\!|x|^2)^{\alpha_{ij}^k+\rho-2}
\le &\widetilde c(1\!+\!|x|^2)^{\alpha^k_{\max}+\rho-1}\!+\!o((1\!+\!|x|^2)^{\alpha^k_{\max}+\rho-1},\infty).
\label{a1k-b}
\end{align}
Hence, we can finally estimate
\begin{align}
\sum_{j=1}^3\mathfrak{a}_{j,k}(x,\hat\varepsilon,\rho)
\le  & 4\rho^2{\hat\varepsilon}^2\zeta^k_{\max}|x|^2(1+|x|^2)^{\alpha^k_{\max}+2\rho-2}-2\rho\hat\varepsilon\eta^k_{\min}|x|^2(1+|x|^2)^{\beta^k_{\min}+\rho-1}\notag\\
&-\theta_{kk}(1+|x|^2)^{\gamma_{kk}}+o((1+|x|^2)^{\alpha^k_{\max}+2\rho-1},\infty)
+o((1+|x|^2)^{\gamma_{kk}},\infty)
\label{luka}
\end{align}
for every $x\in\R^d$.
The right-hand side of \eqref{luka} is bounded from above in $\R^d$ due to the choice of $\rho$ and $\hat\varepsilon$.
We have so proved that $\bm{\varphi}$ is a Lyapunov function for the operator $\bm{\calA}^P$.

Next, we fix $T>0$ and $\sigma>0$ such that
\begin{align}
\frac{\rho(\sigma+1)}{\sigma}<\max\{\gamma_{kk}, \beta^k_{\rm min}+\rho\}, \qquad k=1,\ldots,m. 
\label{piove-ven}
\end{align}
Such a $\sigma$ exists thanks to condition \eqref{cond_lyap_fct_temp}(ii).
We show that the function $\bm{\nu}:\R^d\to\R$, with 
$\nu_h(t,x)=\nu(t,x)=e^{\hat\varepsilon T^{-\sigma} t^\sigma(1+|x|^2)^\rho}$ for every  $t\in[0,T]$, $x\in\R^d$ and $h\in\{1,\ldots,m\}$, 
is a time-dependent Lyapunov function for the operator $D_t+\bm\calA^P$ with respect to $\varphi$ and a suitable function $g\in L^1(0,T)$. 

We fix $k\in\{1,\ldots,m\}$ and observe that 
\begin{align*}
D_t\nu(t,x)+(\bm\calA^P\bm\nu)_k(t,x)
= & \big (\sigma\varepsilon_T(1+|x|^2)^\rho t^{\sigma-1}
+\widetilde{\mathfrak{a}}_{1,k}(t,x,\rho)
+\widetilde{\mathfrak{a}}_{2,k}(t,x,\rho)
+\mathfrak{a}_{3,k}(x)\nu(t,x)
\end{align*}
for every $t\in(0,T]$ and $x\in\R^d$, where
$\varepsilon_T=\hat\varepsilon T^{-\sigma}$, $\widetilde{\mathfrak{a}}_{j,k}(t,x,\rho)=
\mathfrak{a}_{j,k}(x,\varepsilon_Tt^{\sigma},\rho)$ ($j=1,2$).

We write
$\sigma  (1+|x|^2)^\rho t^{\sigma-1}=\sigma  (1+|x|^2)^\rho t^{\sigma-\delta}t^{\delta-1}$, 
with $\delta\in(0,\sigma)$ to be properly fixed later, and apply the Young inequality $ab\leq \frac{a^p}{p}+\frac{b^q}{q}$, which holds true for $a,b\geq0$ and $\frac{1}{p}+\frac{1}{q}=1$, with
$a= (1+|x|^2)^\rho t^{\sigma-\delta}$, $b= t^{\delta-1}$, $p=\frac{\sigma}{\sigma-\delta}$ and $q=\frac\sigma\delta$.    
It follows that
\begin{align}
\sigma (1+|x|^2)^\rho t^{\sigma-1}
\leq (\sigma-\delta)t^\sigma(1+|x|^2)^\frac{\sigma\rho}{\sigma-\delta} +\delta t^{\frac{\sigma(\delta-1)}{\delta}},
\qquad\;\,t\in (0,T],\;\,x\in\R^d.
\label{young}
\end{align}

If we take $\delta>\frac{\sigma}{\sigma+1}$, then it follows that $\frac{\sigma(\delta-1)}{\delta}>-1$. Moreover, since 
$\frac{\sigma\rho}{\sigma-\delta}=\frac{\rho(\sigma+1)}{\sigma}$ when
$\delta=\frac{\sigma}{\sigma+1}$, from \eqref{piove-ven} it follows that we can fix  $\delta\in\left(\frac{\sigma}{\sigma+1},\sigma\right)$ such that
\begin{align*}
\frac{\rho\sigma}{\sigma-\delta}<\max\{\gamma_{kk},\beta_{\rm min}^k+\rho\}, \qquad k=1,\ldots,m.
\end{align*}
With this choice of $\delta$, arguing as for the Lyapunov function ${\bm\varphi}$ we get
\begin{align*}
& \varepsilon_T(\sigma-\delta)t^\sigma(1+|x|^2)^\frac{\sigma\rho}{\sigma-\delta}+\widetilde{\mathfrak{a}}_{1,k}(t,x,\rho)+ \widetilde{\mathfrak{a}}_{2,k}(t,x,\rho)
+\mathfrak{a}_{3,k}(x)\leq \widetilde c,\qquad\;\,(t,x)\in [0,T]\times\R^d.
\end{align*}
Thus, we conclude that $D_t\nu+(\bm\calA^P\bm\nu)_k
\leq g\nu$ in $(0,T]\times\R^d$, where 
$g(t)=\widetilde c+\varepsilon_T\delta t^{\frac{\sigma(\delta-1)}{\delta}}$ for every $t\in(0,T]$, and $\bm{\nu}$ is a Lyapunov function for the operator $D_t+\bm{\calA}^P$ with respect to $\varphi$ and $g$.

Finally, we fix $0<a_0<a<b<b_0<T$, $s>d+2$ and compute the constants $c_j$ $(j=1,\ldots,8)$ in Hypotheses \ref{hyp-stime-nucleo}, with
the weight function $w$ and the Lyapunov functions $\bm{\nu}_1$ and $\bm{\nu}_2$ given by
\begin{align*}
w(t,x)=e^{\varepsilon t^\sigma(1+|x|^2)^\rho} ,\qquad\;\,\nu_1(t,x)=e^{{\varepsilon}_1 t^\sigma(1+|x|^2)^\rho}, \qquad\;\,\nu_2(t,x)=e^{\varepsilon_2 t^\sigma(1+|x|^2)^\rho} 
\end{align*}
for every $(t,x)\in[0,T]\times \R^d$, with $0<\varepsilon<\varepsilon_1<\varepsilon_2\le\varepsilon_T$.

Fix $(t,x)\in [a_0,b_0]\times\R^d$.
Since $\varepsilon<\varepsilon_1$, we can take $c_1=1$.
To determine the remaining constants $c_j$, we take advantage of the estimate
$z^\gamma e^{-\tau z^\rho}
\le C(\gamma,\rho) \tau^{-\frac{\gamma}{\rho}}$ which holds true for every $z\in [0,\infty)$, every $\tau,\gamma>0$ and some positive constant
$C(\gamma,\rho)$.

Using such an estimate we get
\begin{align*}
\frac{|Q^h(x) \nabla w(t,x)|}{w(t,x)^\frac{s-1}{s}\nu_1(t,x)^\frac{1}{s}}
&\leq 2\varepsilon\rho t^\sigma (1+|x|^2)^{\rho-\frac{1}{2}} \sum_{i,j=1}^d |\zeta_{ij}^h|(1+|x|^2)^{\alpha_{ij}^h} e^{-\frac{\varepsilon_1-\varepsilon}{s}
t^\sigma (1+|x|^2)^\rho}\\
&\leq \widetilde c\varepsilon\rho t^\sigma (1+|x|^2)^{\overline{\alpha}+\rho-\frac{1}{2}}  e^{-\frac{\varepsilon_1-\varepsilon}{s}
t^\sigma (1+|x|^2)^\rho}\\
\leq  &\widetilde c t^{\sigma-\frac{\sigma(2\overline \alpha+2\rho-1)^+}{2\rho}}
\le \widetilde c a_0^{- \frac{\sigma}{2\rho}(2\overline{\alpha}-1)^+}:=c_2.
\end{align*}

To go further, we observe that the previous computations show that
$\mathrm{div}(Q^h(x)\nabla w(t,x))=\mathfrak{a}_{1,h}(x,\varepsilon t^{\sigma},\rho)$. Hence, taking \eqref{a1k-a} and \eqref{a1k-b} into account, we can estimate
\begin{align*}
|\mathrm{div}(Q^h(x)\nabla w(t,x))|
\le &\widetilde c(t^{\sigma}(1+|x|^2)^{\overline\alpha+\rho-1}+t^{2\sigma}(1+|x|^2)^{\overline\alpha+2\rho-1})w(t,x)
\end{align*}

Therefore, we obtain that
\begin{align*}
\frac{|\mathrm{div}(Q^h(x)\nabla w(t,x))|}{w(t,x)^\frac{s-2}{s}\nu_1(t,x)^\frac{2}{s}}
\leq \widetilde c t^{\sigma-\frac{\sigma(\overline{\alpha}+2\rho-1)^+}{\rho}}
\le \widetilde c a_0^{-\frac{\sigma}{\rho}(\overline{\alpha}-1)^+}=:c_3.
\end{align*}
Arguing similarly, we can show that (here, we set $\gamma_{\rm max}:=\max\{\gamma_{kk}:k=1,\ldots,m\}$)
\begin{align*}
\frac{|D_t w(t,x)|}{w(t,x)^\frac{s-2}{s}\nu_1(t,x)^\frac{2}{s}}
=& \sigma \varepsilon t^{\sigma-1} (1+|x|^2)^\rho e^{-\frac{2(\varepsilon_1-\varepsilon)}{s}t^\sigma (1+|x|^2)^\rho}
\leq \widetilde c a_0^{-1}=:c_4,\\
\frac{|V^h(x)|}{w(t,x)^{-\frac{2}{s}}\nu_2(t,x)^\frac{2}{s}}
\leq &\widetilde c (1+|x|^2)^{\gamma_{\mathrm{max}}} e^{-\frac{2(\varepsilon_2-\varepsilon)}{s}t^\sigma (1+|x|^2)^\rho}
\leq \widetilde c a_0^{-\frac{\sigma}{\rho}\gamma_\mathrm{max}}=:c_5,\\
\frac{|b^h(x)|}{w(t,x)^{-\frac{1}{s}}\nu_2(t,x)^\frac{1}{s}}
\leq &\widetilde c (1+|x|^2)^{\overline{\beta}+\frac{1}{2}}e^{-\frac{\varepsilon_2-\varepsilon}{s}t^\sigma (1+|x|^2)^\rho}
\le\widetilde c a_0^{-\frac{\sigma}{2\rho}(2\overline{\beta}+1)}=:c_6,\\
\frac{|Q^h(x)|}{w(t,x)^{-\frac{1}{s}}\nu_1(t,x)^\frac{1}{s}}
\leq &\widetilde c(1+|x|^2)^{\overline{\alpha}}e^{-\frac{\varepsilon_1-\varepsilon}{s}t^\sigma (1+|x|^2)^\rho}
\leq \widetilde c a_0^{-\frac{\sigma}{\rho}\overline{\alpha}}=:c_7,\\
\frac{|R^h(x)|}{w(t,x)^{-\frac{2}{s}}\nu_1(t,x)^\frac{2}{s}}
\leq &\widetilde c (1+|x|^2)^{\overline{\alpha}-\frac{1}{2}}e^{-\frac{2(\varepsilon_1-\varepsilon)}{s}t^\sigma (1+|x|^2)^\rho}
\leq\widetilde c a_0^{-\frac{\sigma}{2\rho}(2\overline{\alpha}-1)^+}=:c_8,
\end{align*}
where $V^h$ denotes the $h$-th column of the matrix $V$ and $R^h$ is the matrix whose entries are $D_iq^h_{ij}$.

We now apply Theorem \ref{thm: stima-nucleo_n}.
For this purpose, we preliminarily 
set
$a_0=\frac{1}{8}t$, $a=\frac{1}{4}t$, $b=\frac{1}{2}t$ and $b_0=\frac{3}{4}t$ and note that
$\displaystyle
\int_{a_0}^{b_0}e^{G_j(t)}dt
\leq (b_0-a_0)r_j= \frac{5t}{8}r_j$
for $j=1,2$ and some positive constants $r_1$ and $r_2$. 
Combining the above estimate with the values of the constants $c_1,\dots, c_8$, from \eqref{eq: stima-nucleo} we deduce that for every $(t,x,y)\in(0,T]\times \R^d\times \R^d$,
\begin{align*}
|p_{hk}(t,x,y)|
\leq Ct^{1-\lambda s}e^{\displaystyle -\varepsilon t^\sigma(1+|y|^2)^\rho}, \qquad h,k=1,\ldots,m,
\end{align*}
where $s>d+2$ and
\begin{align}
\lambda:=\max\left\{\frac12, \frac{\sigma}{\rho}\overline\alpha,\frac{\sigma}{2\rho}\gamma_{\max}, \frac{\sigma}{2\rho}(2\overline\beta+1)\right\}, \qquad \textrm{if $\overline\alpha\le\frac{1}{2}$},
\label{lambda}
\end{align}
whereas
\begin{align}
\lambda:=\max\left\{\frac12, \frac{\sigma}{\rho}\overline\alpha, \frac{\sigma}{2\rho}(\overline\alpha+\overline\beta), \frac{\sigma}{2\rho}\gamma_{\max}, \frac{\sigma}{2\rho}(2\overline\beta+1)\right\}, \qquad \textrm{if $\overline\alpha>\frac{1}{2}$}.  
\label{lambda-1}
\end{align}

To get a decay estimate of the function $p_{hk}(t,\cdot,y)$, we consider the operator $\bm\calA^{P,*}$ and
assume the following conditions.

\begin{hyp}\label{hp-polinomiale-1}
\begin{enumerate}[\rm(i)]
\item 
Assumptions $(i)$ and $(iii)$ in Hypotheses $\ref{hp-polinomiale}$ are satisfied;
\item 
for every $k\in\{1,\ldots,m\}$ it holds that
$\gamma_{kk}>\max\{\beta^k_{\max},\gamma_{hk},\alpha^k_{\max}-1: h\in\{1,\ldots,m\}\setminus\{k\}\}$.
\end{enumerate}
\end{hyp}

Since Hypothesis \ref{hp-polinomiale-1}(ii) implies that $\gamma_{kk}>\max_{h=1,\ldots,m, h\neq k}\{\beta^k_{\max},\gamma_{hk}\}$  for every $k=1,\ldots,m$, it follows that condition \eqref{segno_V_P_*} is satisfied. 
The same assumption guarantees that
the function $\varphi_*:\R^d\to\R$, defined by $\varphi_*(x):=e^{\hat\varepsilon_*(1+|x|^2)^{\rho_*}}$ for  every $x\in\R^d$, is a Lyapunov function for $\bm\calA^{P,*}$ with $\rho_*$ being a positive constant satisfying the conditions
\begin{align}
(i)~\gamma_{kk}\geq \max\{\alpha_{\rm max}^k+2\rho_*-1,\beta_{\rm max}^k+\rho_*\}, \qquad\;\,(ii)~\gamma_{kk}>\rho_*, \quad k=1,\ldots,m,
\label{cond_1_lyap_adj_pol_2}
\end{align}
(such a $\rho_*$ exists due to Hypothesis \ref{hp-polinomiale-1}(ii))
and $\hat\varepsilon_*>0$ fulfills
\begin{align*}
\begin{cases}
\displaystyle
\hat\varepsilon_*<\left(\frac{\theta_{kk}}{4\rho_*^2\zeta^k_{\max}}\right)^{\frac{1}{2}}, & 
\gamma_{kk}>2\beta^k_{\max}-\alpha^k_{\max}+1, \\[3mm]
\displaystyle 
\hat\varepsilon_*<\frac{\theta_{kk}}{2\rho_*\eta^k_{\max}}, &  
\gamma_{kk}<2\beta^k_{\max}-\alpha^k_{\max}+1, \\[3mm]
\displaystyle \hat\varepsilon_*<\frac{-\eta^k_{\max}+\big((\eta^k_{\max})^2+4\zeta^k_{\max}\theta_{kk}\big )^{\frac{1}{2}}}{4\rho_*\zeta^k_{\max}}, &  
\gamma_{kk}=2\beta^k_{\max}-\alpha^k_{\max}+1\end{cases}
\end{align*}
for every $k\in\{1,\ldots,m\}$ such that the equality in \eqref{cond_1_lyap_adj_pol_2}(i) is realized. Indeed, in this case we get $({\bm\calA}^{P,*}\bm{\varphi})_k
= (\mathfrak{a}_{1,k}(\cdot,\hat\varepsilon_*,\rho_*)
+\widehat{\mathfrak{a}}_{2,k}(\cdot,\hat\varepsilon_*,\rho_*)+\widehat{\mathfrak{a}}_{3,k})\varphi$ for every $k=1,\ldots,m$,
where $\widehat{\mathfrak{a}}_{2,k}(\cdot,\hat\varepsilon_*,\rho_*)$ is defined as $\mathfrak{a}_{2,k}(\cdot,\hat\varepsilon_*,\rho_*)$, with the second term being replaced by its opposite, and
\begin{align*} 
\widehat{\mathfrak{a}}_{3,k}(x)=
-\theta_{kk}(1+|x|^2)^{\gamma_{kk}}\bigg(&1-\theta_{kk}^{-1}\sum_{h= 1, h\neq k}^m|\theta_{hk}|(1+|x|^2)^{\gamma_{hk}-\gamma_{kk}}-\sum_{i=1}^d\eta^k_i(1+|x|^2)^{\beta^k_i-\gamma_{kk}}\\
&-
2\sum_{i=1}^d\beta_i^k\eta_i^kx_i^2(1+|x|^2)^{\beta_i^k-\gamma_{kk}-1}
\bigg)
\end{align*}
for every $x\in\R^d$. It turns out that, for every $x\in\R^d$,
\begin{align*}
(\bm\calA^{P,*}\bm\varphi)_k(x)
\le  & \big (4\rho_*^2\hat\varepsilon_*^2\zeta^k_{\max}(1+|x|^2)^{\alpha^k_{\max}+2\rho_*-1}+2\rho_*\hat\varepsilon_*\eta^k_{\max}(1+|x|^2)^{\beta^k_{\max}+\rho_*}-\theta_{kk}(1+|x|^2)^{\gamma_{kk}}\notag\\
&+o((1+|x|^2)^{\alpha^k_{\max}+2\rho_*-1},\infty)
+o((1+|x|^2)^{\gamma_{kk}},\infty)\big )\varphi(x).
\end{align*}
The choice of $\hat\varepsilon_*$ and  condition \eqref{cond_1_lyap_adj_pol_2}(i) imply that
$\bm{\varphi}_*$ is a Lyapunov function for $\bm\calA^{P,*}$. 

Finally, arguing as above, we deduce that the function $\bm{\nu}_*:[0,T]\times\R^d\to\R$, with all the components equal to $\nu_{*}(t,x)=e^{\hat\varepsilon_{*}T^{-\sigma_*}t^{\sigma_*}(1+|x|^2)^{\rho_*}}$ for every $(t,x)\in[0,T]\times \R^d$, is a time-dependent Lyapunov function for the operator $D_t+\bm\calA^{P,*}$ with respect to $\varphi_*$ and some $g_*\in L^1(0,T)$, if we assume that   
$\sigma_*>\frac{\rho_*}{\gamma_{\min}-\rho_*}$,
where $\gamma_{\min}=\min_{k=1,\dots,m}\gamma_{kk}$.
Note that condition \eqref{cond_1_lyap_adj_pol_2}(ii) implies that $\rho_*<\gamma_{\min}$.
Choosing 
\begin{align*}
w^*(t,x)=e^{\varepsilon_* t^{\sigma_*}(1+|x|^2)^{\rho_*}} ,\qquad\;\,\nu_1^*(t,x)=e^{{\varepsilon}_1 t^{\sigma_*}(1+|x|^2)^{\rho_*}}, \qquad\;\,\nu_2^*(t,x)=e^{\varepsilon_2 t^{\sigma_*}(1+|x|^2)^{\rho_*}} 
\end{align*}
for every $(t,x)\in[0,T]\times \R^d$, with $0<\varepsilon_*<\varepsilon_1<\varepsilon_2\leq T^{-\sigma_*}\hat\varepsilon_*$, and recalling that $\gamma_{kk}>\beta^k_{\max}$ for every $k=1,\ldots,m$, same computations as in the first part of this example show that we can take 
the constant $c_{*,j}$ defined
as the corresponding constant
$c_j$ $(j=1,\ldots,8)$, with $(\sigma,\rho)$ being replaced by
$(\sigma_*,\rho_*)$.


From Corollary \ref{coro:stima_peso_*} we conclude that, for every $(t,x,y)\in(0,T]\times \R^d\times \R^d$, 
\begin{align*}
|p_{hk}^{P,*}(t,x,y)|
\leq Ct^{1-\lambda_* s}e^{\displaystyle -\varepsilon_* t^{\sigma_*}(1+|y|^2)^{\rho_*}},
\qquad h,k=1,\ldots,m,
\end{align*}
where $s$ is any number larger than $d+2$ and $\lambda_*$ is defined as $\lambda$ (see \eqref{lambda} and \eqref{lambda-1}), with $\sigma$ and $\rho$ being replaced, respectively, by $\sigma_*$ and $\rho_*$.
Summing up, under the conditions
\begin{hyp}
\begin{enumerate}[\rm(i)]
\item 
for every $i,j=1,\ldots,d$ and $h,k=1,\ldots,m$,  $\alpha_{ij}^k=\alpha_{ji}^k$, $\beta_i^k$ and $\gamma_{kh}$ are nonnegative constants, whereas $\eta_i^k$ and $\theta_{kk}$ are positive constants;
\item 
for every $k=1,\ldots,m$ it holds that $\alpha_{\min}^k> \max_{i\neq j}\alpha_{ij}^k$ and the matrix $Z^k$, whose entries are 
$z_{ii}=\zeta_{ii}$ and $z_{ij}=-|\zeta_{ij}|$ for every $i,j\in\{1,\ldots,d\}$, with $i\neq j$, is symmetric and positive definite;
\item 
$\gamma_{kk}>\max\{\gamma_{kh},\gamma_{hk},\beta^k_{\max},\alpha^k_{\max}-1: h\in\{1,\ldots,m\}\setminus\{k\}\}$,
\end{enumerate}
\end{hyp}
\noindent
from Proposition \ref{prop-2.7}, the following kernel estimates are satisfied:
\begin{eqnarray*}
|p_{hk}(t,x,y)|
\leq Ct^{1-(\lambda+\lambda_*)\frac{s}{2}}e^{-\frac{\varepsilon}{2} t^\sigma(1+|y|^2)^\rho}e^{-\frac{\varepsilon_*}{2} t^{\sigma_*}(1+|x|^2)^{\rho_*}}, \qquad\;\, (t,x,y)\in(0,T]\times \R^d\times \R^d,
\end{eqnarray*}
for every $h,k\in\{1,\ldots,m\}$, every $s>d+2$ and some positive constant $C=C(T)$. 

\subsection{Kernel estimates in case of exponentially growing coefficients}

We consider the operator $\bm{\calA}$ defined by \eqref{operatore} with
\begin{align*}
q_{ij}^k(x)&=\zeta_{ij}^ke^{(1+|x|^2)^{\alpha_{ij}^k}},\qquad\;\, b_i^k(x)=-\eta_i^kx_ie^{(1+|x|^2)^{\beta_i^k}},\qquad\;\, v_{hk}(x)=\theta_{hk}e^{(1+|x|^2)^{\gamma_{hk}}}
\end{align*}
for every $x\in\R^d$, $i,j=1,\dots,d$ and $h,k=1,\dots,m$, and we assume the following conditions.

\begin{hyp}
\label{hyp-exp-1}
\begin{enumerate}[\rm (i)]
\item 
Conditions $(i)$ to $(iii)$ in Hypotheses $\ref{hp-polinomiale}$ are satisfied;
\item 
for every $k=1,\ldots,m$, it holds that
$\max\{\beta^k_\mathrm{min},\gamma_{kk}\}>\alpha^k_{\max}$.
\end{enumerate}    
\end{hyp}

As a consequence of Hypothesis \ref{hp-polinomiale}(iii), we can estimate 
\begin{align*}
\langle Q^k(x)\xi,\xi\rangle
\geq  e^{(1+|x|^2)^{\alpha_{\min}^k}}\langle Z^k\xi,\xi\rangle\ge\lambda_{Z^k}e^{(1+|x|^2)^{\alpha_{\min}^k}}
|\xi|^2 =:  \eta_k(x)|\xi|^2,\qquad\;\,x,\xi\in\R^d.
\end{align*}
Hence, Hypothesis \ref{hyp-base}(ii) is verified. It is easy to check Hypothesis \ref{hyp-base}(iv) too.

We consider the function $\varphi:\R^d\to\R$, defined by
$\varphi(x)=\displaystyle\exp\bigg (\hat\varepsilon \int_0^{1+|x|^2} e^\frac{\tau^\rho}{2}d\tau\bigg )$ for every $x\in\R^d$,
with $\hat{\varepsilon}>0$ and
$0<\rho<\max\{\beta^k_\mathrm{min},\gamma_{kk}\}$ for every $k=1,\ldots,m$, and prove that the function $\bm{\varphi}:\R^d\to\R^m$, with all the components equal to $\varphi$, is a Lyapunov function for the operator $\bm\calA^P$. 
Since
$({\bm\calA}^P\bm{\varphi})_k
=(\mathfrak{a}_{1,k}+ \mathfrak{a}_{2,k}+\mathfrak{a}_{3,k})\varphi$ for all $k=1,\ldots,m$, where
\begin{align*}
&\mathfrak{a}_{1,k}(x,\hat\varepsilon)=
2\hat\varepsilon\sum_{i=1}^d\zeta_{ii}^ke^{(1+|x|^2)^{\alpha_{ii}^k}+\frac12(1+|x|^2)^{\rho}}+4\hat\varepsilon^2\sum_{i,j=1}^d\zeta_{ij}^kx_ix_je^{(1+|x|^2)^{\alpha_{ij}^k}+(1+|x|^2)^{\rho}}
\notag\\
&\phantom{\mathfrak{a}_{1,k}(x,\hat\varepsilon)=}+2\hat\varepsilon\sum_{i,j=1}^d\zeta_{ij}^kx_ix_j[\rho(1+|x|^2)^{\rho-1}+2\alpha^k_{ij}
(1+|x|^2)^{\alpha^k_{ij}-1}]e^{(1+|x|^2)^{\alpha_{ij}^k}+\frac12(1+|x|^2)^{\rho}};
\\[1mm]    
&\mathfrak{a}_{2,k}(x,\hat\varepsilon)=-2\hat\varepsilon\sum_{i=1}^d\eta_i^kx_i^2e^{(1+|x|^2)^{\beta_{i}^k}+\frac12(1+|x|^2)^{\rho}};
\notag
\\[1mm]
&\mathfrak{a}_{3,k}(x)=-\sum_{h=1}^m\theta_{kh}e^{(1+|x|^2)^{\gamma_{kh}}}
\end{align*}
for every $x\in\R^d$,
the conditions on the parameters imply that 
\begin{align}
\sum_{j=1}^2\mathfrak{a}_{j,k}(x,\hat\varepsilon)
+\mathfrak{a}_{3,k}(x)
\le & 4\hat\varepsilon^2\zeta_\mathrm{max}^k|x|^2e^{(1+|x|^2)^{\alpha_\mathrm{max}^k}+(1+|x|^2)^{\rho}}
-2\hat\varepsilon\eta^k_{\min}|x|^2e^{(1+|x|^2)^{\beta_{\min}^k}+\frac{1}{2}(1+|x|^2)^{\rho}}\notag\\
&-\theta_{kk}e^{(1+|x|^2)^{\gamma_{kk}}}+o(e^{(1+|x|^2)^{\alpha_{\max}^k}+(1+|x|^2)^{\rho}},\infty)
+o(e^{(1+|x|^2)^{\gamma_{kk}}},\infty)
\label{roxette}
\end{align}
for every $x\in\R^d$. Hypothesis \ref{hyp-exp-1}(ii) and the choice of $\rho$ show that the
right-hand side of \eqref{roxette} is bounded from above over $\R^d$, so that there exists a positive constant $\lambda$ such that $({\bm\calA}^P\bm{\varphi})_k\leq \lambda\varphi$ on $\R^d$ for every $k=1,\ldots,m$.

Next, we fix $T>0$, consider the function $\nu:[0,T]\times\R^d\to\R$, defined by
\begin{eqnarray*}
\nu(t,x)=\exp\bigg (\varepsilon_T t^\sigma \int_0^{1+|x|^2} e^\frac{\tau^\rho}{2}d\tau\bigg ),\qquad\;\,(t,x)\in [0,T]\times\R^d,
\end{eqnarray*}
with $\sigma>0$ and $\varepsilon_T= T^{-\sigma}\hat\varepsilon$,
and show that the function ${\bm \nu}:[0,T]\times\R^d\to\R^m$, with all the components equal to $\nu$, is a time-dependent Lyapunov function for the operator $D_t+{\bm\calA}^P$ with respect to $\varphi$ and $g$, for a suitable function $g\in L^1(0,T)$. For this purpose, we fix $k\in\{1,\ldots,m\}$ and observe that
\begin{align*}
D_t\nu(t,x)+({\bm\calA}^P\bm{\nu})_k(t,x)
=\bigg(\sigma\varepsilon_T t^{\sigma-1}\int_0^{1+|x|^2}e^{\frac{\tau^\rho}{2}}d\tau+\widetilde{\mathfrak{a}}_{1,k}(t,x)+\widetilde{\mathfrak{a}}_{2,k}(t,x)+\mathfrak{a}_{3,k}(x)\bigg)\nu(t,x)  
\end{align*}
for every $(t,x)\in (0,T]\times\R^d$, where $\widetilde{\mathfrak a}_{j,k}(t,x)=\mathfrak{a}_{j,k}(x,\varepsilon_Tt^{\sigma})$ ($j=1,2$).

In view of \eqref{roxette} and observing that
$\int_0^{1+|x|^2}e^{\frac{\tau^\rho}{2}}d\tau\le (1+|x|^2)e^{\frac{1}{2}(1+|x|^2)^{\rho}}$ for every $x\in\R^d$, we can estimate 
\begin{align*}
D_t\nu(t,x)\!+\!({\bm\calA}^P\bm{\nu})_k(t,x)
\le \bigg (&\sigma \varepsilon_Tt^{\sigma-1}(1+|x|^2)e^{\frac{1}{2}(1+|x|^2)^{\rho}}
\!+\!4\varepsilon_T^2T^{\sigma}t^{\sigma}\zeta_\mathrm{max}^k|x|^2e^{(1+|x|^2)^{\alpha_\mathrm{max}^k}+(1+|x|^2)^{\rho}}\\
&-2\varepsilon_Tt^{\sigma}\eta^k_{\min}|x|^2e^{(1+|x|^2)^{\beta_{\min}^k}+\frac{1}{2}(1+|x|^2)^{\rho}}-\theta_{kk}e^{(1+|x|^2)^{\gamma_{kk}}}\notag\\
&+t^{\sigma}o(e^{(1+|x|^2)^{\alpha_{\max}^k}+(1+|x|^2)^{\rho}},\infty)
+o(e^{(1+|x|^2)^{\gamma_{kk}}},\infty)\bigg )\nu(t,x)
\end{align*}
for every $(t,x)\in (0,T]\times\R^d$. The same arguments as in the proof of \eqref{young} can be used to show that 
\begin{align*}
\sigma t^{\sigma-1}(1+|x|^2)e^{\frac{1}{2}(1+|x|^2)^{\rho}}\le &
 t^{\sigma}(1+|x|^2)^{\frac{\sigma}{\sigma-\delta}}e^{\frac{\sigma}{2(\sigma-\delta)}(1+|x|^2)^{\rho}}+\delta t^{\frac{\sigma(\delta-1)}{\delta}}\\
= &\delta t^{\frac{\sigma(\delta-1)}{\delta}}+t^{\sigma}(o(e^{(1+|x|^2)^{\max\{\gamma_{kk},\beta^k_{\min}\}}},\infty)
\end{align*}
for every $\delta\in (0,\sigma)$. If we take $\delta\in \left (\frac{\sigma}{\sigma+1},\sigma\right )$, then we can estimate 
$D_t\nu+({\bm\calA}^P\bm{\nu})_k\le g\nu$, where
$g(t)=\widetilde c+\varepsilon_T\delta t^{\frac{\sigma(\delta-1)}{\delta}}$ for every $t\in (0,T]$, and the function $g$ is integrable in $(0,T)$.

\medskip

We now pass to check Hypotheses \ref{hyp-stime-nucleo}. For this purpose, we fix 
$0<a_0<a<b<b_0<T$ and
$(t,x)\in [a_0,b_0]\times\R^d$
and consider the functions $w,\nu_1,\nu_2:[0,T]\times\R^d\to\R$, defined by
\begin{align*}
w(t,x)=\exp\bigg (\varepsilon t^\sigma\int_0^{1+|x|^2}
e^{\frac{\tau^{\rho}}{2}}d\tau\bigg ) ,\qquad\;\,\nu_j(t,x)=\exp\bigg (\varepsilon_j t^\sigma\int_0^{1+|x|^2}
e^{\frac{\tau^{\rho}}{2}}d\tau\bigg ),\;\,j=1,2,
\end{align*}
for every $(t,x)\in [0,T]\times\R^d$, where $\varepsilon<\varepsilon_1<\varepsilon_2<\varepsilon_T$.

Clearly, we can take $c_1=1$. To estimate the constant $c_2$ and the remaining constants, we 
note that, for every positive constants $a$, $b$, $\rho$, $c$, $\delta$, there exists a positive constant $C$, depending on the above constants but independent of $t\in(0,T]$, such that
\begin{equation}
(1+|x|^2)^ae^{b(1+|x|^2)^{\rho}}
e^{(1+|x|^2)^c}\le  
Ce^{\frac{1}{4}t^{-\sigma}}\exp\bigg (\delta t^{\sigma}\int_0^{1+|x|^2}e^{\frac{\tau^{\rho}}{2}}d\tau\bigg )  
\label{cibus}
\end{equation}
for every $t\in (0,T]$ and $x\in\R^d$.
To prove \eqref{cibus}, we begin by estimating
\begin{eqnarray*}
\exp\bigg (\delta t^{\sigma}\!\int_0^{1+|x|^2}e^{\frac{\tau^{\rho}}{2}}d\tau\bigg )\ge\exp\bigg (\delta t^{\sigma}\!
\int_{|x|^2}^{1+|x|^2}
e^{\frac{\tau^{\rho}}{2}}d\tau\bigg )\ge
\exp\Big (\delta t^{\sigma}
e^{\frac{|x|^{2\rho}}{2}}\Big ),\qquad\;\,(t,x)\in (0,T]\times\R^d.
\end{eqnarray*}
Hence, if we set $r=1+|x|^2$ and take the above estimate into account, then we easily realize that \eqref{cibus} is satisfied if the function $g:[1,\infty)\to\R$, defined by 
$g(r)=r^a\exp\left (br^{\rho}+r^c-\delta t^{\sigma}e^{\frac{1}{2}(r-1)^{\rho}}\right )$ for every $r\in [1,\infty)$, is bounded from above by a positive constant times $e^{\frac{1}{4}t^{-\sigma}}$. This follows from observing that
$g(r)\le \widetilde C\exp \left (br^{\rho}+2r^c-\delta t^{\sigma}e^{\frac{(r-1)^{\rho}}{2}}\right )$ for every $r\in [1,\infty)$ and a positive constant $\widetilde C$, which depends on $a$ and $c$, and 
$br^{\rho}+2r^c\le \delta e^{\frac{(r-1)^{\rho}}{4}}$ for every $r\in [r_0,\infty)$ for some $r_0>1$, which depends on $b$, $\rho$, $c$ and $\delta$. For such values of $r$ and recalling that $t\in (0,T]$, we can estimate $g(r)\le \widetilde C\exp \left (\delta\left (e^{\frac{(r-1)^{\rho}}{4}}-
t^{\sigma}e^{\frac{(r-1)^{\rho}}{2}}\right )\right )$. Since the maximum of the function $z\mapsto z-t^{\sigma}z^2$ is $(4t^{\sigma})^{-1}$, we conclude that $g(r)\le \widetilde C\exp\left (\frac{1}{4t^{\sigma}}\right )$ for every 
$r\in [r_0,\infty)$. Clearly, we can extend the previous estimate to every $r\in [1,\infty)$ up to replacing $\widetilde C$ with a larger constant, if needed. Inequality \eqref{cibus} follows.

We can now determine the constants $c_2,\ldots,c_8$. Since
\begin{align*}
(Q^h(x) \nabla w(t,x))_i
=2\varepsilon t^{\sigma}\bigg (\sum_{j=1}^d\xi_{ij}^hx_je^{(1+|x|^2)^{\alpha^h_{ij}}}e^{\frac{(1+|x|^2)^{\rho}}{2}}\bigg )w(t,x)
\end{align*}
for every $t\in (0,T]$, $x\in\R^d$ and $i=1,\ldots,d$, it follows that 
\begin{align*}
|Q^h(x) \nabla w(t,x)|
\le \widetilde c t^{\sigma}|x|e^{(1+|x|^2)^{\alpha^h_{\max}}}e^{\frac{(1+|x|^2)^{\rho}}{2}}w(t,x),\qquad\;\,
(t,x)\in (0,T]\times\R^d.
\end{align*}
Applying estimate \eqref{cibus}, with
$a=b=\frac{1}{2}$, $c=\alpha_{\max}^h$, $\delta=\frac{\varepsilon_1-\varepsilon}{s}$ and observing that
$t^{\sigma}\le T^{\sigma}$ for every $t\in [0,T]$, we conclude that
\begin{align*}
|Q^h(x) \nabla w(t,x)|\le \widetilde ct^{\sigma}e^{\frac{1}{4}t^{-\sigma}}
(w(t,x))^{\frac{s-1}{s}}(\nu_1(t,x))^{\frac{1}{s}},\qquad\;\,(t,x)\in (0,T]\times\R^d.
\end{align*}
Hence, we choose $c_2=\widetilde cb_0^{\sigma}e^{\frac{1}{4}a_0^{-\sigma}}$.

As far as $c_3$ is concerned, we observe that
$\mathrm{div}(Q^h(x)\nabla w(t,x))
=\mathfrak{a}_{1,k}(x,\varepsilon t^{\sigma})$
for every $(t,x)\in (0,T]\times\R^d$. 
Taking
$\delta=\frac{2(\varepsilon_1-\varepsilon)}{s}$ and choosing
properly $a$, $b$, $c$ in \eqref{cibus}, from \eqref{roxette} it follows that
\begin{align*}
|\mathrm{div}(Q^h(x)\nabla w(t,x))|
\leq \widetilde ct^{\sigma}e^{\frac14t^{-\sigma}}{w(t,x)^\frac{s-2}{s}\nu_1(t,x)^\frac{2}{s}}, \qquad (t,x)\in(0,T]\times \R^d.
\end{align*}
Hence, we can put $c_3=\widetilde c b_0^{\sigma}e^{\frac{1}{4}a_0^{-\sigma}}$.
Furthermore, $|D_tw(t,x)|\le \overline ct^{-1}$ for
every $(t,x)\in (0,T]\times\R^d$, so that 
we can take $c_4=\widetilde c a_0^{-1}$.

Finally, we observe that
\begin{eqnarray*}
\begin{array}{ll}
|V^h(x)|
\leq \widetilde c e^{(1+|x|^2)^{\gamma_{\max}}},\quad
&|b^h(x)|\le\widetilde c|x|e^{(1+|x|^2)^{\beta^h_{\max}}},\\[1mm]
|Q^h(x)|
\le\widetilde ce^{(1+|x|^2)^{\alpha^h_{\max}}},
&|R^h(x)|\le 
\widetilde c|x|(1+|x|^2)^{\alpha_{\max}^h-1}e^{(1+|x|^2)^{\alpha^h_{\max}}}
\end{array}
\end{eqnarray*}
for every $x\in\R^d$.
Hence, choosing properly $a$, $b$, $c$ and $\delta$ in \eqref{cibus}, it follows easily that we can take $c_5=c_6=c_7=c_8=\widetilde ce^{\frac{1}{4}a_0^{-\sigma}}$.

Applying Theorem \ref{thm: stima-nucleo_n}, with $s>d+2$, $a_0=\frac{t}{1+4\phi}$, $a=\frac{t}{1+3\phi}$, $b=\frac{t}{1+2\phi}$ and $b_0=\frac{t}{1+\phi}$ and some $\phi>0$, we can infer that, for every $(t,x,y)\in(0,T]\times \R^d\times \R^d$,
\begin{align*}
|p_{hk}(t,x,y)|
\leq C_{\phi}t\exp\bigg (\frac{s(1+4\phi)^{\sigma}}{4}t^{-\sigma}-\varepsilon t^{\sigma}\int_0^{1+|y|^2}e^{\frac{\tau^{\rho}}{2}}d\tau\bigg ), \qquad h,k=1,\ldots,m,
\end{align*}
where $C_{\phi}$ is a positive constant which only depends on $\phi$ and blows up as $\phi$ tends to $0$.
Since $s$ is any arbitrary fixed number greater than $d+2$, properly choosing $\phi>0$ we can replace
$4^{-1}s(1+4\phi)^{\sigma}$ by any constant $\hat c>\frac{d+2}{4}$ and infer that
for every $(t,x,y)\in(0,T]\times \R^d\times \R^d$,
\begin{align*}
|p_{hk}(t,x,y)|
\leq Ct\exp\bigg (\hat ct^{-\sigma}-\varepsilon t^{\sigma}\int_0^{1+|y|^2}e^{\frac{\tau^{\rho}}{2}}d\tau\bigg ), \qquad h,k=1,\ldots,m.
\end{align*}

Now we consider the operator $\bm\calA^{P,*}$ and assume the following
\begin{hyp}
\label{hyp-exp-2}
\begin{enumerate}[\rm (i)]
\item 
Conditions $(i)$ and $(iii)$ in Hypotheses $\ref{hp-polinomiale}$ are satisfied;
\item 
for every $k=1,\ldots,m$ it holds that
$\gamma_{kk}>\max\{\alpha^k_{\max},\beta^k_{\max},\gamma_{hk}: h\in\{1,\ldots,m\}\setminus\{k\}\}$.
\end{enumerate}    
\end{hyp}

We take $\varphi_*(x)=\displaystyle\exp\bigg (\hat\varepsilon_*\int_0^{1+|x|^2}e^{\frac{\tau^{\rho_*}}{2}}d\tau\bigg )$ for every $x\in\R^d$, some $\hat\varepsilon_*>0$ and $0<\rho_*<\gamma_{kk}$ for every $k=1,\ldots,m$, and observe that
\begin{align}
(\bm{\calA}^{P,*}\bm{\varphi}_*)_k(x)\le
\Big (&4\hat\varepsilon^2_*\zeta_{\max}^k|x|^2e^{(1+|x|^2)^{\alpha_\mathrm{max}^k}+(1+|x|^2)^{\rho_*}}
+2\hat\varepsilon_*\eta^k_{\max}|x|^2e^{(1+|x|^2)^{\beta_{\max}^k}+\frac{1}{2}(1+|x|^2)^{\rho_*}}\notag\\
&-\theta_{kk}e^{(1+|x|^2)^{\gamma_{kk}}}
+o(e^{(1+|x|^2)^{\gamma_{kk}}},\infty)\Big )\varphi_*(x)
\label{roxette_2}
\end{align}
for every $x\in\R^d$. 
Hypothesis \ref{hyp-exp-2}(ii) and condition $\rho^*<\gamma_{kk}$ for every $k=1,\ldots,m$ guarantee that the term in brackets in the right-hand side of \eqref{roxette_2} is bounded from above in $\R^d$, so that the function $\bm{\varphi}_*$ is a Lyapunov function for the operator $\bm{\calA}^{P,*}$. Moreover, again from Hypotheses \ref{hyp-exp-2} it follows that the function $\bm{\nu}_*:[0,T]\times\R^d\to\R^m$, with all components given by  
$\displaystyle\nu_*(t,x)=\exp\bigg (\varepsilon_{*,T}t^{\sigma_*}\int_0^{1+|x|^2}e^{\frac{\tau^{\rho_*}}{2}}d\tau\bigg )$ for every $(t,x)\in[0,T]\times \R^d$, with $\varepsilon_{*,T}=T^{-\sigma_*}\hat\varepsilon_*$ and some $\hat\varepsilon_*>0$, is a time-dependent Lyapunov function for the operator $D_t+\bm\calA^{P,*}$ with respect to $\varphi_*$ and a suitable $g_*\in L^1(0,T)$. Note that also Hypotheses \ref{hyp-stime-nucleo_*} are satisfied with constants $c_{*,j}$ ($j=1,\ldots,8)$ which have the same expression of the corresponding constants  $c_j$ ($j=1,\ldots,8)$. From Corollary \ref{coro:stima_peso_*}, it follows that, for every $\hat c>\frac{d+2}{4}$, there exists a positive constant $C$ such that
\begin{align*}
|p_{hk}^{P,*}(t,x,y)|
\leq Ct\exp\bigg (\hat ct^{-\sigma_*}-\varepsilon t^{\sigma_*}\int_0^{1+|x|^2}e^{\frac{\tau^{\rho_*}}{2}}d\tau\bigg ),
\qquad (t,x,y)\in(0,T]\times \R^d\times \R^d,
\end{align*}
for every $h,k=1,\ldots,m$ and $\varepsilon>0$.
From Proposition \ref{prop-2.7}, we thus conclude that for every $\hat c>\frac{d+2}{4}$ there exists a positive constant $C$ such that, for every $(t,x,y)\in(0,T]\times \R^d\times \R^d$,
\begin{align*}
|p_{hk}(t,x,y)|
\leq Ct\exp\bigg [\hat ct^{-\sigma}-\frac{1}{2}\varepsilon t^{\sigma}\bigg (\int_0^{1+|y|^2}e^{\frac{\tau^{\rho}}{2}}d\tau+\int_0^{1+|x|^2}e^{\frac{\tau^{\rho}}{2}}d\tau\bigg )\bigg ]
\end{align*}
for every $h,k=1,\ldots,m$, every  $\rho\in (0,\gamma_{\min})$, provided that we assume the following
\begin{hyp}
\begin{enumerate}[\rm (i)]
\item 
Conditions $(i)$ and $(iii)$ in Hypotheses $\ref{hp-polinomiale}$ are satisfied;
\item 
for every $k=1,\ldots,m$ it holds that
$\gamma_{kk}>\max\{\alpha^k_{\max},\beta^k_{\max},\gamma_{hk},\gamma_{kh}: h\in\{1,\ldots,m\}\setminus\{k\}\}$.
\end{enumerate}    
\end{hyp}

\section{Appendix}
In this section, we provide some Schauder estimates which have been exploited in Proposition \ref{prop:conv_SMGR}.
Let $\Omega\subseteq\R^d$ be an open and connected domain and set $\eta_0=\min_{k=1,\ldots,m}\eta^0_k$.

\begin{theorem}
\label{teo-Schauder}
Let $\uu\in C^{1,2}([0,T]\times \Omega)$ be a solution to the system of differential equations $D_t\uu=\bm\calA^P\uu+\bm g$ in $[0,T]\times \Omega$ with $\bm g\in C^{\frac{\alpha}{2},\alpha}_{\rm loc}([0,T]\times \Omega)$ and such that $\uu(0,\cdot)\in C^{2+\alpha}_{\rm loc}(\Omega)$. Then, $\uu\in C^{1+\frac{\alpha}{2},2+\alpha}_{\rm loc}([0,T]\times \Omega)$ and for every compact sets $\Omega_1\Subset\Omega_2\Subset\Omega$ such that ${\rm dist}(\Omega_1,\Omega_2^c)>0$, there exists a positive constant $C=C(T,\eta_0,\Omega_1,\Omega_2)$ such that
\begin{align*}
\|\uu\|_{C^{1+\frac{\alpha}{2},2+\alpha}(\Omega_{1,T})}
\leq  C\big(\|\uu(0,\cdot)\|_{C^{2+\alpha}(\Omega_2)}+\|\uu\|_{C_b(\Omega_{2,T})}
+\|\bm g\|_{C^{\frac{\alpha}{2},\alpha}(\Omega_{2,T})}\big),
\end{align*}
where $\Omega_{j,T}=[0,T]\times\Omega_j$ $(j=1,2)$.
\end{theorem}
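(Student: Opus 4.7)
The plan is to reduce the system to scalar parabolic equations by treating the weak (zero-order) coupling as a forcing term, apply classical scalar interior parabolic Schauder estimates componentwise, and absorb the coupling terms through interpolation on a chain of nested domains.

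For each $k\in\{1,\dots,m\}$, first rewrite $D_t u_k = (\bm{\calA}^P\uu)_k + g_k$ as $D_t u_k = \calA^k u_k + G_k$, where $\calA^k u := \mathrm{div}(Q^k\nabla u) + \langle b^k,\nabla u\rangle - v_{kk} u$ is a scalar strictly elliptic operator with locally H\"older coefficients (Hypothesis \ref{hyp-base}(i)--(ii)), and
\begin{align*}
G_k := g_k - \sum_{j\neq k} v_{kj}^P u_j.
\end{align*}
Since $\uu\in C^{1,2}([0,T]\times\Omega)$, each $u_j$ is in $C^{\alpha/2,\alpha}_{\rm loc}([0,T]\times\Omega)$ by the standard embedding on compact sets (Lipschitz in $t$, $C^2$ in $x$). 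Together with $v_{kj}^P\in C^\alpha_{\rm loc}$, this yields $G_k\in C^{\alpha/2,\alpha}_{\rm loc}([0,T]\times\Omega)$, and the classical scalar interior parabolic Schauder theory applies to each component, giving $u_k\in C^{1+\alpha/2,2+\alpha}_{\rm loc}([0,T]\times\Omega)$.

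To get the quantitative bound with the precise data on the right-hand side, fix a finite chain of compact sets $\Omega_1 = \Omega^{(0)}\Subset \Omega^{(1)}\Subset\cdots\Subset \Omega^{(N)} = \Omega_2$, with ${\rm dist}(\Omega^{(\ell)},(\Omega^{(\ell+1)})^c)>0$. The scalar interior Schauder estimate yields, for each $\ell$ and each $k$,
\begin{align*}
\|u_k\|_{C^{1+\frac{\alpha}{2},2+\alpha}(\Omega^{(\ell)}_T)}
\leq C\Bigl(\|u_k(0,\cdot)\|_{C^{2+\alpha}(\Omega^{(\ell+1)})}
+\|u_k\|_{C_b(\Omega^{(\ell+1)}_T)}
+\|G_k\|_{C^{\frac{\alpha}{2},\alpha}(\Omega^{(\ell+1)}_T)}\Bigr),
\end{align*}
where $\Omega^{(\ell)}_T := [0,T]\times\Omega^{(\ell)}$ and $C$ depends on $T,\eta_0$ and the geometry of the chain (finitely many constants). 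Estimating $\|G_k\|_{C^{\alpha/2,\alpha}(\Omega^{(\ell+1)}_T)}$ by $\|g_k\|_{C^{\alpha/2,\alpha}(\Omega^{(\ell+1)}_T)} + \sum_{j\neq k}\|v_{kj}^P\|_{C^\alpha(\Omega^{(\ell+1)})}\|u_j\|_{C^{\alpha/2,\alpha}(\Omega^{(\ell+1)}_T)}$ and summing over $k$, we invoke the standard interpolation inequality in parabolic H\"older spaces: for any $\varepsilon>0$ there is $C_\varepsilon>0$ with
\begin{align*}
\|u_j\|_{C^{\frac{\alpha}{2},\alpha}(\Omega^{(\ell+1)}_T)}
\leq \varepsilon\,\|u_j\|_{C^{1+\frac{\alpha}{2},2+\alpha}(\Omega^{(\ell+1)}_T)}
+C_\varepsilon\,\|u_j\|_{C_b(\Omega^{(\ell+1)}_T)}.
\end{align*}

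Choosing $\varepsilon$ so small that the coefficient in front of $\sum_j\|u_j\|_{C^{1+\alpha/2,2+\alpha}(\Omega^{(\ell+1)}_T)}$ is strictly less than $1$, and iterating the resulting recursive inequality along the chain $\ell = 0,1,\dots,N-1$, one absorbs the $C^{1+\alpha/2,2+\alpha}$ contributions from the off-diagonal coupling and arrives at the bound of $\sum_k\|u_k\|_{C^{1+\alpha/2,2+\alpha}(\Omega_{1,T})}$ in terms of the $C^{2+\alpha}$ norm of $\uu(0,\cdot)$ and the $C_b$ and $C^{\alpha/2,\alpha}$ norms of $\uu$ and $\bm g$ on $\Omega_{2,T}$. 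The main obstacle is precisely this zero-order coupling: it prevents an immediate componentwise application of scalar Schauder theory, and the nested-chain plus interpolation/absorption scheme is what circumvents it while keeping the constant dependent only on $T$, $\eta_0$ and the geometry of $\Omega_1\Subset\Omega_2$.
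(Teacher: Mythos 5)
Your route — treating the zero-order coupling $\sum_{j\neq k}v_{kj}^Pu_j$ as a scalar forcing, applying classical interior Schauder componentwise, and then interpolating — is a reasonable alternative to the paper's approach, which instead multiplies $\uu$ by a family of cutoffs $\vartheta_n$ so that $\uu_n=\vartheta_n\uu$ solves a system on all of $\R^d$ with a modified operator $\widetilde{\bm\calA}^P$ (coupling kept inside the operator), applies a global Schauder estimate on $\R^d$, and iterates over the cutoff radii. Both reduce, in essence, to an iteration of the form $\zeta_\ell\le\eta\zeta_{\ell+1}+B_\ell$. However, your argument has a genuine gap exactly at the moment you invoke ``iterating \dots one absorbs.''

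With a \emph{finite} chain $\Omega^{(0)}\Subset\cdots\Subset\Omega^{(N)}=\Omega_2$, the iteration yields $\zeta_0\le\eta^N\zeta_N+\sum_{j<N}\eta^jB_j$, and $\zeta_N=\sum_k\|u_k\|_{C^{1+\alpha/2,2+\alpha}(\Omega_{2,T})}$ is precisely the quantity you have no a priori bound for in terms of the admissible data $\|\uu(0,\cdot)\|_{C^{2+\alpha}}$, $\|\uu\|_{C_b}$, $\|\bm g\|_{C^{\alpha/2,\alpha}}$; absorption requires all $\zeta_\ell$ to live on the \emph{same} domain, which they do not. If instead you pass to an \emph{infinite} chain to let $\eta^m\zeta_m\to 0$, the distances ${\rm dist}(\Omega^{(\ell)},(\Omega^{(\ell+1)})^c)$ must tend to zero, and the scalar interior Schauder constants $C_\ell$ — and hence the terms $B_\ell$ and the smallness threshold for $\varepsilon$ — blow up; one must quantify that blow-up and check that it is beaten by the geometric factor $\eta^\ell$. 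This bookkeeping is exactly what the paper does: it takes $r_n=(2-2^{-n})r$, tracks the $2^{3n}$ growth of $\|\vartheta_n\|_{C^3}$, picks $\varepsilon_n=2^{-3n}C^{-1}\eta$ with $\eta<2^{-3(2+\alpha)}$, proves the series $\sum_n(2^{3(2+\alpha)}\eta)^n$ converges, and — crucially — uses the already established \emph{qualitative} membership $\uu\in C^{1+\alpha/2,2+\alpha}_{\rm loc}$ to bound $\zeta_m\le C2^{3m}\|\uu\|_{C^{1+\alpha/2,2+\alpha}(B_T(x_0,2r))}$ and conclude $\eta^m\zeta_m\to 0$. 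Your write-up omits all of this, so the quantitative estimate does not follow from what you have written.
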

\begin{proof}
The proof follows the lines of that of \cite[Theorem 6.2.10]{LorRhabook21} (see also \cite[Theorem A.2]{AddAngLorTes17}). We stress that it is enough to prove the statement for $\Omega_1=B(x_0,r)$ and $\Omega_2=B(x_0,2r)$, where $x_0\in\Omega$ and $r\in(0,\frac12{\rm dist}(x_0,\Omega^c))$. Indeed, the general case, then will follow by a covering $\Omega_1$ with a finite number of balls such that the union of the double balls is contained in $\Omega_2$.

To enlighten the notation, we set $\R^d_T:=(0,T)\times\R^d$ and $B_T(x_0,R):=(0,T)\times B(x_0,R)$ for every $R>0$. Finally, along the proof, $C$ denotes a positive constant which may vary line by line.

Let $x_0,r,\Omega_1,\Omega_2$ be as above. We introduce the sequence $(r_n)$ given by $r_n=(2-2^{-n})r$ for every $n\in\N\cup\{0\}$. Let $\vartheta\in C^\infty(\R)$ be such that $\chi_{(-\infty,1]}\leq \vartheta\leq \chi_{(-\infty,2]}$, and for every $n\in\N\cup\{0\}$ let us set
$\vartheta_n(x)=\vartheta\left(1+\frac{|x-x_0|-r_n}{r_{n+1}-r_n}\right)$ for every $x\in\R^d$.
It follows that $\vartheta_n(x)=1$ if $|x-x_0|\leq r_n$ and $\vartheta_n(x)=0$ if $|x-x_0|\geq r_{n+1}$. We introduce the operator $\widetilde{\bm\calA}^P$, which acts on smooth functions $\f:\R^d\to \R^m$ as 
\begin{align*}
(\widetilde {\bm\calA} ^P \f)_i=\rho\ \!{\rm div}(Q^i\nabla\f)+(1-\rho)\Delta\f+\rho\langle b^i,\nabla\f\rangle-\rho(V^P\f)_i, \qquad\;\, i=1,\ldots,m,    
\end{align*}
where $\rho\in C^\infty_c(\R^d)$ satisfies $\rho(x)=1$ for every $x\in \overline{B(x_0,2r)}$.

We set $\uu_n:=\vartheta_n\uu$ for every $n\in\N$. Then, $\uu_n\in C^{1,2}([0,T]\times \R^d)$ and it satisfies the equation $D_t\uu_n=\widetilde{\bm\calA}\uu_n+\bm g_n$ in $(0,\infty)\times \R^d$, where $g_{n,i}=\vartheta_n g_i-u_i({\rm div}(Q^i\nabla\vartheta_n)+\langle b^i,\nabla\vartheta_n)\rangle-2Q^i\langle \nabla u_i,\nabla\theta_n\rangle$ for every $i=1,\ldots,m$.
It follows that, for every $n\in\N\cup\{0\}$,
\begin{align*}
\|\uu_n\|_{C^{1+\frac\alpha2,2+\alpha}_b(\R^d_T)}\leq C\big(\|\uu_n(0,\cdot)\|_{C_b^{2+\alpha}(\R^d)}+\|\bm g_n\|_{C^{\frac\alpha2,\alpha}_b(\R^d_T)}\big).
\end{align*}
Taking $n=0$ we deduce that $\uu\in C^{1+\frac\alpha2,2+\alpha}(B_T(x_0,r))$ and the arbitrariness of $r$ and $x_0$ implies that $\uu\in C^{1+\frac\alpha2,2+\alpha}_{\rm loc}([0,T]\times \Omega)$.

We notice that, for every $n\in\N$,
\begin{align*}
\|\vartheta_n\uu(0,\cdot)\|_{C^{2+\alpha}_b(\R^d)}
\leq \|\vartheta_n\|_{C^{3}_b(\R^d)}\|\uu(0,\cdot)\|_{C_b^{2+\alpha}(B(x_0,2r))}
\leq C 2^{3n}\|\uu(0,\cdot)\|_{C_b^{2+\alpha}(B(x_0,2r))}.
\end{align*}
Hence, taking the definition of $\bm g_n$ into account, we deduce that
\begin{align*}
\|\uu_n\|_{C^{1+\frac\alpha2,2+\alpha}_b(\R^d_T)}
\leq & 2^{3n}C\Big(\|\uu(0,\cdot)\|_{C_b^{2+\alpha}(B(x_0,2r))}+\|\uu\|_{C^{\frac\alpha2,1+\alpha}_b(B_T(x_0,r_{n+1}))}+ \|\bm g\|_{C_b^{\frac\alpha2,\alpha}(B_T(x_0,2r))}\Big) \\
\leq &  2^{3n}C\Big(\|\uu_{n+1}\|_{C^{\frac\alpha2, \alpha}_b(\R^d_T)}+\sum_{j=1}^d\|D_j\uu_{n+1}\|_{C^{\frac\alpha2, \alpha}_b(\R^d_T)} \\
&\qquad\quad + \|\uu(0,\cdot)\|_{C_b^{2+\alpha}(B(x_0,2r))}+\|\bm g\|_{C_b^{\frac\alpha2,\alpha}(B_T(x_0,2r))}\Big).
\end{align*}
From well-known estimates (see for instance \cite[Chapter 1]{LorRhabook21}, which we apply componentwise) it follows that
\begin{align*}
\|\uu_n\|_{C^{1+\frac\alpha2,2+\alpha}_b(\R^d_T)}
\leq & 2^{3n}C\Big(\varepsilon\|\uu_{n+1}\|_{C^{1+\frac\alpha2,2+\alpha}_b(\R^d_T)}+\varepsilon^{-(1+\alpha)}\|\uu_{n+1}\|_\infty \\
&\qquad\quad+\|\uu(0,\cdot)\|_{C_b^{2+\alpha}(B(x_0,2r))}+\|\bm g\|_{C_b^{\frac\alpha2,\alpha}(B_T(x_0,2r))}\Big)
\end{align*}
for every $\varepsilon>0$. Now we fix $\eta\in(0,2^{-3(2+\alpha)})$ and choose $\varepsilon=\varepsilon_n=2^{-3n}C^{-1}\eta$. Setting $\zeta_n:=\|\uu_n\|_{C^{1+\frac\alpha2,2+\alpha}_b(\R^d_T)}$ for every $n\in\N\cup\{0\}$, the above estimate implies that
\begin{align*}
\zeta_n
\leq \eta\zeta_{n+1}+2^{3n(2+\alpha)}C\|\uu\|_{C_b(B_T(x_0,2r))} +2^{3n}C\big( \|\uu(0,\cdot)\|_{C_b^{2+\alpha}(B(x_0,2r))} +\|\bm g\|_{C_b^{\frac\alpha2,\alpha}( B_T(x_0,2r))}\big).    
\end{align*}
Multiplying both the sides by $\eta^n$ and summing from $0$ to $m\in\N$, it follows that
\begin{align*}
\zeta_0-\eta^{m+1}\zeta_{m+1}
\leq & C\|\uu\|_{C_b(B_T(x_0,2r))}\sum_{n=0}^m(2^{3(2+\alpha)}\eta)^n\\
&+ C\Big ( \|\uu(0,\cdot)\|_{C_b^{2+\alpha}(B(x_0,2r))}+\|\bm g\|_{C_b^{\frac\alpha2,\alpha}( B_T(x_0,2r))}\Big )\sum_{n=0}^m(2^{3}\eta)^n \\ 
\leq & C\Big(\|\uu\|_{C_b( B_T(x_0,2r))}+\|\uu(0,\cdot)\|_{C_b^{2+\alpha}(B(x_0,2r))}+\|\bm g\|_{C_b^{\frac\alpha2,\alpha}(B_T(x_0,2r))}\Big),
\end{align*}
since both the series in the right-hand side converge. Finally, we claim that $(\eta^{m}\zeta_{m})$ vanishes as $m$ tends to $0$. Indeed, we have proved that $\uu\in C^{1+\frac\alpha2,2+\alpha}([0,T]\times K)$ for every $K\Subset \Omega$. Hence, for every $n\in\N$ we get
\begin{align*}
\zeta_n
\leq \|\theta_n\|_{C^3_b(\R^d)}\|\uu\|_{C_b^{1+\frac\alpha2,2+\alpha}(B_T(x_0,2r))}\leq 2^{3n}C\|\uu\|_{C_b^{1+\frac\alpha2,2+\alpha}(B_T(x_0,2r))}.
\end{align*}
The choice of $\eta$ gives the claim and so
\begin{align*}
\|\uu\|_{C^{1+\frac\alpha2,2+\alpha}_b( B_T(x_0,r))}
\leq \zeta_0
\leq C\Big(\|\uu\|_{C( B_T(x_0,2r))}+\|\uu(0,\cdot)\|_{C_b^{2+\alpha}(B(x_0,2r))}+\|\bm g\|_{C_b^{\frac\alpha2,\alpha}(B_T(x_0,2r))}\Big).
\end{align*}

\end{proof}

\end{document}